\newtheorem{thm}{Theorem}[section]
\newtheorem{prp}[thm]{Proposition}
\newtheorem{lem}[thm]{Lemma}
\theoremstyle{definition}
\theoremstyle{remark}
\numberwithin{equation}{section}
\newcommand{\CC}{\mathbb{C}}
\newcommand{\NN}{\mathbb{N}}
\newcommand{\TT}{\mathbb{T}}
\newcommand{\ZZ}{\mathbb{Z}}
\newcommand{\RR}{\mathbb{R}}
\newcommand{\QQ}{\mathbb{Q}}
\newcommand{\Bb}{\mathcal{B}}
\newcommand{\Kk}{\mathcal{K}}
\newcommand{\Mm}{\mathcal{M}}
\newcommand{\Hh}{\mathcal{H}}
\newcommand{\Aut}{\operatorname{Aut}}
\newcommand{\supp}{\operatorname{supp}}
\newcommand{\coker}{\operatorname{coker}}
\newcommand{\diag}{\operatorname{diag}}
\newcommand{\id}{\operatorname{id}}
\newcommand{\image}{\operatorname{im}}
\newcommand{\lt}{\operatorname{lt}}
\newcommand{\MCE}{\operatorname{MCE}}
\newcommand{\lsp}{\operatorname{span}}
\newcommand{\clsp}{\overline{\lsp}}
\newcommand{\Hoho}{\Lambda^{\min}}
\title[Stably finite and AF-embeddable $k$-graph algebras]
{AF-embeddability of $2$-graph algebras and quasidiagonality of $k$-graph algebras}
\author{Lisa Orloff Clark}
\author{Astrid an Huef}
\address[L.O. Clark and A. an Huef]{Department of Mathematics and Statistics\\
University of Otago\\
PO Box 56\\
Dunedin 9054\\
New Zealand}
\email[L.O. Clark]{lclark@maths.otago.ac.nz}
\email[A. an Huef]{astrid@maths.otago.ac.nz}
\author{Aidan Sims}
\address[A. Sims]{School of Mathematics and Applied Statistics\\
University of Wollongong\\
NSW 2522\\
Australia}
\email[A. Sims]{asims@uow.edu.au}
\subjclass[2010]{46L05}
\keywords{Quasidiagonal; AF-embeddable; stably finite; semifinite trace; graph trace; higher-rank graph; graph $C^*$-algebra}
\thanks{This research was supported by the University of Otago, the University of Wollongong, the Marsden Fund of
the Royal Society of New Zealand, and the Australian Research Council. We thank John Quigg for helpful conversations.
We thank the anonymous referee for helpful and constructive suggestions.}
\date{\today}
\begin{document}

\begin{abstract}
We characterise quasidiagonality of the $C^*$-algebra of a cofinal $k$-graph in terms of
an algebraic condition involving the coordinate matrices of the graph. This result covers
all simple $k$-graph $C^*$-algebras. In the special case of cofinal $2$-graphs we further
prove that AF-embeddability, quasidiagonality and stable finiteness of the $2$-graph
algebra are all equivalent.
\end{abstract}

\maketitle

\section{Introduction}

Finite-dimensional approximation properties for $C^*$-algebras play a very important role
in their structure theory \cite{BO}. In particular, AF-embeddability and the weaker
notions of quasidiagonality and stable finiteness play an important role in recent
advances in classification theory for simple $C^*$-algebras \cite{MatuiSato:JFA2014}.
Here we determine exactly which cofinal $k$-graphs have quasidiagonal $C^*$-algebras. We
also establish that the $C^*$-algebra of a cofinal $k$-graph is quasidiagonal if and only
if it is stably finite. When $k = 2$, we prove that these conditions are also equivalent
to AF-embeddability of the $C^*$-algebra. These results cover all simple $k$-graph
$C^*$-algebras.

Our motivation, and our key tool, is a theorem of Brown \cite[Theorem~0.2]{Brown:JFA98},
which says that if $A$ is an AF algebra and $\alpha$ is an automorphism of $A$, then
AF-embeddability, quasidiagonality and stable finiteness of the crossed product
$A\times_\alpha \ZZ$ are equivalent and are characterised by a condition on the map
$K_0(\alpha)$ in $K$-theory induced by $\alpha$. Since every AF-embeddable $C^*$-algebra
is quasidiagonal and every quasidiagonal $C^*$-algebra is stably finite, the crucial
implication of Brown's theorem says that if the image of the homomorphism $1 -
K_0(\alpha)$ contains no nontrivial elements of the positive cone of $K_0(A)$ then $A
\times_\alpha \ZZ$ is AF-embeddable. Brown describes this $K$-theoretic condition  by
saying that ``$K_0(\alpha)$ compresses no elements in $K_0(A)$.''

It is well-known that a simple graph $C^*$-algebra is AF if the graph contains no cycles,
and is purely infinite otherwise \cite{KPR}. More generally, Schafhauser
\cite{Schafhauser} has proved that AF-embeddability, quasidiagonality and stable
finiteness of $C^*(E)$ are all equivalent to the absence of a cycle with an entrance in
the graph $E$. The hard implication is that the absence of a cycle with an entrance
implies AF-embeddability, and Schafhauser proves this by direct construction. But it can
also be recovered from Brown's result using the standard realisation of a graph
$C^*$-algebra, up to stable isomorphism, as a crossed-product of the $C^*$-algebra of an
associated skew-product graph. To do so, we show that both Brown's $K$-theoretic
condition and the absence of cycles with entrances in $E$ are equivalent to the condition
that the vertex matrix $A$ of $E$ satisfies $(1 - A^t)\ZZ E^0 \cap \NN E^0 = \{0\}$ (for
details, see Lemma~\ref{lem:Halpha and matrix} and Lemma~\ref{lem:no positives}.)

Characterising pure infiniteness, stable finiteness or approximate finite dimensionality
for $k$-graph $C^*$-algebras (even assuming simplicity) has proven much more complicated
than for directed graphs (see the partial results in \cite{EvansPhD, EvansSims:JFA2012,
Sims:CJM06}). Each $k$-graph algebra is a crossed product of an AF algebra by $\ZZ^k$
\cite{KP} rather than $\ZZ$. So we cannot typically apply Brown's result to understand
when the $C^*$-algebra $C^*(\Lambda)$ of a $k$-graph $\Lambda$ is quasidiagonal.
Nevertheless, we are led to investigate the relationship between stable finiteness of
$C^*(\Lambda)$ and the group $\sum^k_{i=1} (1 - A_i^t)\ZZ\Lambda^0$ where $A_i$ denotes
the vertex matrix of the $i$th coordinate subgraph of $\Lambda$, and then obtain
quasidiagonality from recent results of Tikuisis--White--Winter \cite{TWW}.

To describe our conclusions, we first recall two key concepts. A $k$-graph $\Lambda$ is
\emph{cofinal} if it is possible to reach any vertex of $\Lambda$ from some point on any
infinite path in $\Lambda$ (see page~\pageref{pg:cofinal} for details). Since a $k$-graph
$\Lambda$ is cofinal if and only if every vertex projection in $C^*(\Lambda)$ is full,
the $C^*$-algebras of cofinal $k$-graphs include all simple $k$-graph $C^*$-algebras. A
\emph{graph trace} on a $k$-graph $\Lambda$ is a function $g$ from the set of vertices of
$\Lambda$ into $[0,\infty)$ that respects the Cuntz--Krieger relation, and is
\emph{faithful} if $g(v) \not= 0$ for every vertex $v$ (see page~\pageref{pg:graphtrace}
for details). Our main result is the following:

\begin{thm}\label{thm-main}
Let $\Lambda$ be a row-finite and cofinal $k$-graph with no sources, and coordinate
matrices $A_1, \dots, A_k$.
\begin{enumerate}
\item\label{thm-main-item1} The following are equivalent.
    \begin{enumerate}
    \item\label{it:k-QD} $C^*(\Lambda)$ is quasidiagonal.
    \item\label{it:k-SF} $C^*(\Lambda)$ is stably finite.
    \item\label{it:k-MX} $\Big(\sum_{i=1}^k\image(1-A^t_i) \Big) \cap
        \NN{\Lambda^0} = \{0\}$.
    \item\label{it:k-GT} $\Lambda$ admits a faithful graph trace.
    \end{enumerate}
\item\label{thm-main-item2} If $k=2$, then the equivalent conditions
    (\ref{it:k-QD})--(\ref{it:k-GT}) hold if and only if $C^*(\Lambda)$ can be
    embedded  in an approximately finite-dimensional $C^*$-algebra.
\end{enumerate}
\end{thm}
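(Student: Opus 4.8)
The plan for part~(\ref{thm-main-item1}) is to go around the cycle
(\ref{it:k-QD})$\Rightarrow$(\ref{it:k-SF})$\Rightarrow$(\ref{it:k-GT})$\Leftrightarrow$(\ref{it:k-MX}), together with the final link (\ref{it:k-GT})$\Rightarrow$(\ref{it:k-QD}). The implication (\ref{it:k-QD})$\Rightarrow$(\ref{it:k-SF}) is free since quasidiagonal $C^*$-algebras are stably finite (and likewise, for part~(\ref{thm-main-item2}), AF-embeddability trivially gives~(\ref{it:k-QD})). For (\ref{it:k-SF})$\Rightarrow$(\ref{it:k-GT}) I would use that $C^*(\Lambda)$ is nuclear, hence exact, so that stable finiteness yields a faithful (semifinite) trace $\tau$ --- via Blackadar--Handelman and Haagerup's theorem that quasitraces on exact $C^*$-algebras are traces --- while cofinality forces every $p_v$ to be full, so $\tau(p_v)>0$ for all $v$, and then the Cuntz--Krieger relations show $v\mapsto\tau(p_v)$ is a faithful graph trace (in the non-unital case one works on a full corner and transports the trace back along a Morita equivalence, using cofinality again to see it is finite and faithful on every $p_v$). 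For (\ref{it:k-GT})$\Leftrightarrow$(\ref{it:k-MX}) I would exploit the duality between vertices and their projections: the graph-trace condition on $g\colon\Lambda^0\to[0,\infty)$ is exactly that $(1-A_i)g=0$ for each $i$, so pairing any element of $\big(\sum_i\image(1-A^t_i)\big)\cap\NN\Lambda^0$ against a faithful $g$ yields $0$ and forces the element to vanish; conversely, a Hahn--Banach separation of the rationally defined subspace $\sum_i\image(1-A^t_i)$ from the positive cone produces a \emph{nonzero} graph trace, and cofinality promotes it to a faithful one, because the zero set of any graph trace is readily seen to have hereditary, saturated complement, hence is empty or all of $\Lambda^0$.

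For the remaining implication (\ref{it:k-GT})$\Rightarrow$(\ref{it:k-QD}) I would appeal to the Tikuisis--White--Winter theorem \cite{TWW}. The algebra $C^*(\Lambda)$ is separable, nuclear and satisfies the UCT, and for each finite $F\subseteq\Lambda^0$ the corner $p_FC^*(\Lambda)p_F$, with $p_F=\sum_{v\in F}p_v$, is unital and carries a faithful tracial state obtained by normalising the restriction of the trace built from a faithful graph trace; hence each such corner is quasidiagonal. Since $C^*(\Lambda)$ is the closure of the increasing (directed) union of these corners and quasidiagonality passes to such inductive limits \cite{BO} --- extend the approximating completely positive maps using Arveson's extension theorem --- $C^*(\Lambda)$ is quasidiagonal. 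The only point I expect to need genuine care here is the infinite-dimensional instance of the Hahn--Banach step in (\ref{it:k-MX})$\Rightarrow$(\ref{it:k-GT}), which can be handled by exhausting $\Lambda^0$ by finite sets together with a Tychonoff compactness argument, or equivalently by analysing the state space of $K_0(C^*(\Lambda))\cong\coker\big(\bigoplus_i(1-A^t_i)\big)$.

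For part~(\ref{thm-main-item2}), only the implication (\ref{it:k-MX})$\Rightarrow$AF-embeddability is at issue, and it is the crux. Mimicking the $1$-graph argument sketched in the introduction, I would use the skew-product picture of \cite{KP} to realise $C^*(\Lambda)$, up to stable isomorphism, as an iterated crossed product $(B\rtimes_\beta\ZZ)\rtimes_{\widehat\alpha}\ZZ$, where $B=C^*(\Lambda\times_d\ZZ^2)$ is AF and $\alpha,\beta$ are the commuting generators of the $\ZZ^2$-action, $\widehat\alpha$ being the extension of $\alpha$ to $B\rtimes_\beta\ZZ$. The first task is to translate condition~(\ref{it:k-MX}), phrased through $A_1$ and $A_2$, into Brown's $K$-theoretic ``no compression'' condition for $\beta$ --- a $2$-graph analogue of Lemma~\ref{lem:Halpha and matrix} and Lemma~\ref{lem:no positives} --- so that \cite[Theorem~0.2]{Brown:JFA98} embeds $B\rtimes_\beta\ZZ$ into an AF algebra $\mathcal{B}$. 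One then needs this embedding to be equivariant, so that $\widehat\alpha$ extends to some $\widetilde\alpha\in\Aut(\mathcal{B})$; then $C^*(\Lambda)$ is stably isomorphic to a subalgebra of $\mathcal{B}\rtimes_{\widetilde\alpha}\ZZ$, and a second application of Brown's theorem --- now to the genuinely AF algebra $\mathcal{B}$, after checking the $K$-theoretic condition for $\widetilde\alpha$ --- shows that $\mathcal{B}\rtimes_{\widetilde\alpha}\ZZ$, hence $C^*(\Lambda)$, is AF-embeddable.

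The main obstacle is exactly this last step. Brown's theorem governs $\ZZ$-crossed products of AF algebras, whereas $C^*(\Lambda)$ is naturally a $\ZZ^2$-crossed product of an AF algebra, so one must invoke Brown's machinery twice; the difficulty is that after one application the intermediate algebra is only AF-embeddable, not AF, so the first application must be carried out functorially enough to remain equivariant for the second coordinate, and one must then control the compression condition for the induced automorphism $\widetilde\alpha$. It is precisely this bootstrap that fails to continue to $k\ge 3$, which is why the AF-embeddability conclusion is restricted to $2$-graphs.
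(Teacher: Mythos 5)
Your part~(\ref{thm-main-item1}) is essentially sound, though your cycle of implications differs from the paper's in one genuine way. The paper proves (\ref{it:k-SF})${}\implies{}$(\ref{it:k-MX}) by realising $C^*(\Lambda)$ up to stable isomorphism as $C^*(\Lambda\times_d\ZZ^k)\times_\alpha\ZZ^k$ and computing $K_0$ of the AF core (Lemmas \ref{lem:SNworkhorse}~and~\ref{lem:Halpha and matrix}), whereas you go (\ref{it:k-SF})${}\implies{}$(\ref{it:k-GT}) directly via Blackadar--Handelman and Haagerup on a full unital corner and close the circle with the pairing argument for (\ref{it:k-GT})${}\implies{}$(\ref{it:k-MX}). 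That route is legitimate: corners of stably finite algebras are stably finite, and a nonzero densely defined trace cannot vanish on a full projection, so cofinality does give $\tau(p_w)>0$ for every $w$. It buys a shorter path, at the cost of needing cofinality already for (\ref{it:k-SF})${}\implies{}$(\ref{it:k-MX}), which the paper's $K$-theoretic argument does not (see Theorem~\ref{thm-for-main}(\ref{prp-for-main2})). Two small points: it is the zero set of a graph trace, not its complement, that is hereditary and saturated; and your separation-plus-compactness step for (\ref{it:k-MX})${}\implies{}$(\ref{it:k-GT}) must build in a normalisation $\xi^n_v=1$ at a fixed vertex so that the compactness limit is nonzero---this is exactly where Proposition~\ref{prp-build-graph-trace} spends its effort.

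Part~(\ref{thm-main-item2}) has a genuine gap, sitting exactly where you flagged it. Brown's theorem produces an AF-embedding of $B\times_\beta\ZZ$ but gives no control over that embedding: there is no reason the second generator $\widehat\alpha$ of the $\ZZ^2$-action should extend to an automorphism of the ambient AF algebra $\mathcal{B}$, and no way to verify the compression condition for such an extension even if one existed. This is not a technical wrinkle to be smoothed over; it is why the ``apply Brown twice'' strategy does not work. The paper avoids it in two ways. First, it does not skew by the full degree map: it uses the cocycle $c(\lambda)=d(\lambda)_1$, so that $C^*(\Lambda\times_c\ZZ)$ is an increasing union of algebras Morita equivalent to $C^*(\Lambda^{\NN e_2})$ (Proposition~\ref{prp:skew}); this core is AF precisely when the red coordinate graph has no cycles, and then a \emph{single} application of Brown's theorem, together with the $K$-theory computations of Lemmas \ref{lem K of scewed graph algebra}~and~\ref{Halpha and the red condition}, yields AF-embeddability. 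Second, when both coordinate graphs contain cycles, stable finiteness forces every such cycle to have no entrance (Lemma~\ref{lem infinite projn}), and cofinality then collapses the infinite path space over a suitable vertex to a single point, whence $C^*(\Lambda)$ is stably isomorphic to $C(\TT^2)$ (Proposition~\ref{prp both cycles no entries}) and is AF-embeddable for elementary reasons. The correct dichotomy is therefore not ``iterate Brown's theorem'' but ``one colour is cycle-free, in which case one application of Brown's theorem to the right skew product suffices, or both colours have cycles, in which case a direct structural analysis using cofinality applies''; your part~(\ref{thm-main-item2}) argument would need to be replaced along these lines.
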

It follows that if cofinal $k$-graphs $\Lambda$ and $\Gamma$ have the same skeleton, then
$C^*(\Lambda)$ is stably finite if and only if $C^*(\Gamma)$ is.

We prove part~(\ref{thm-main-item1}) of this theorem in Section~\ref{sec:kgraphs}. Let
$A$ be a $C^*$-algebra with cancellation in $K_0$. Brown's proof of the implications
\emph{AF-embeddability implies quasidiagonality}, \emph{quasidiagonality implies stable
finiteness}, and \emph{stable finiteness implies that $K_0(\alpha)$ compresses no
elements on $K_0(A)$} goes through, with a suitably modified version of the last
condition, to crossed products of $A$ by arbitrary discrete groups. We apply this to the
usual realisation of $C^*(\Lambda)$ up to stable isomorphism as a crossed product of an
AF algebra by $\ZZ^k$ to prove
(\ref{it:k-QD})${\implies}$(\ref{it:k-SF})${\implies}$(\ref{it:k-MX}). Results of
\cite{PRS} show that faithful graph traces on $\Lambda$ are in bijection with faithful
semifinite traces on $C^*(\Lambda)$. To prove (\ref{it:k-GT})${\implies}$(\ref{it:k-QD}),
we combine this with Tikuisis, White and Winter's striking recent theorem, which says
that every trace on a nuclear $C^*$-algebra in the UCT class is a quasidiagonal trace. To
close the circle, we use the Separating Hyperplane Theorem from convex analysis and the
finite-intersection property in $[0,1]^{\Lambda^0}$ to deduce from the matrix
condition~(\ref{it:k-MX}) that $\Lambda$ admits a faithful graph trace.

The proof of part~(\ref{thm-main-item2}) of our main theorem occupies Sections
\ref{sec:2graphs}~and~\ref{sec:cofinal2graphs}. Most of the work is in
Section~\ref{sec:2graphs}, where we deal with the situation where one of the coordinate
graphs contains no cycles. Our results in this section do not require that $\Lambda$ is
cofinal. We show that $C^*(\Lambda)$ can be realised up to stable isomorphism as a
crossed-product of the graph $C^*$-algebra of the cycle-free coordinate graph by an
automorphism $\alpha$. Since the coordinate graph has no cycles, its $C^*$-algebra is AF,
and so Brown's theorem implies that AF-embeddability of $C^*(\Lambda)$, quasidiagonality
of $C^*(\Lambda)$ and stable finiteness of $C^*(\Lambda)$ are all equivalent to the
condition that $K_0(\alpha)$ compresses no elements of $K_0$. The bulk of the work in
this section is involved in establishing that this $K$-theoretic condition is equivalent
to condition~(\ref{it:k-MX}).

We are then left to deal with the situation where $\Lambda$ is cofinal and has cycles of
both colours, which we consider in Section~\ref{sec:cofinal2graphs}. Since every
AF-embeddable $C^*$-algebra is stably finite, it suffices to show that if $C^*(\Lambda)$
is stably finite, then it is AF-embeddable. We use stable finiteness of $C^*(\Lambda)$ to
see that no cycle in $\Lambda$ has an entrance, and then argue directly that
$C^*(\Lambda)$ is stably isomorphic to $C(\TT^2)$ and therefore AF-embeddable. In the
final section, Section~\ref{sec:examples}, we detail three examples of applications of
our results to previously-considered classes of $2$-graphs.

\section{Background}

Let $A$ be a $C^*$-algebra. We say that $A$ is \emph{AF-embeddable} if there exists an
injective homomorphism from $A$ into an AF algebra.  A projection in $A$ is
\emph{infinite} if it is Murray-von Neumann equivalent to a proper subprojection of
itself. A projection which is not infinite is called \emph{finite}. The $C^*$-algebra $A$
is \emph{infinite} if it contains an infinite projection, and is called \emph{finite} if
it admits an approximate unit of projections and all projections in $A$ are finite; if
$A\otimes\Kk$ is finite, then $A$ is \emph{stably finite} \cite[page~7]{Rordam:EMS02}.
Finally, $A$ is \emph{quasidiagonal} if there exists a faithful representation $\pi : A
\to B(\Hh)$ such that $\pi(A)$ is a set of quasidiagonal operators in the sense of
\cite[Definition~3.5]{Brown:ASPM04} (we call $\pi$ a \emph{quasidiagonal
representation}).

We will frequently need to compute in the free abelian group on generators indexed by $X$
for a countable set $X$. We denote this group by $\ZZ X$, and regard it as the group of
finitely supported integer-valued functions on $X$; so we denote the generator
corresponding to $x \in X$ by $\delta_x$. For $a \in \ZZ X$, we also write $a_x$ for
$a(x)$.

If $A$ and $B$ are stably isomorphic and have approximate identities of projections, then
$A$ is stably finite if and only if $B$ is. The next lemma is presumably well known.

\begin{lem}\label{lem:basics}
Suppose that $C^*$-algebras $A$ and $B$ are stably isomorphic.  Then $A$ is AF-embeddable
(respectively, quasidiagonal,  AF) if and only if $B$ is.
\end{lem}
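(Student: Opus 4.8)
The plan is to reduce everything to the classical characterisations of AF-embeddability, quasidiagonality and approximate finite-dimensionality that are stable under passing to hereditary subalgebras and to matrix amplifications. Since $A$ and $B$ are stably isomorphic, $A\otimes\Kk \cong B\otimes\Kk$, so it suffices to prove that for any $C^*$-algebra $C$, the algebra $C$ is AF-embeddable (respectively quasidiagonal, respectively AF) if and only if $C\otimes\Kk$ is. The ``only if'' directions are the easy ones: $\Kk$ itself is AF (hence AF-embeddable and quasidiagonal), and each of the three properties is preserved under minimal tensor products with an AF algebra --- for AF this is standard, for AF-embeddability one embeds $C\hookrightarrow D$ with $D$ AF and then $C\otimes\Kk\hookrightarrow D\otimes\Kk$ with $D\otimes\Kk$ AF, and for quasidiagonality one uses that the minimal tensor product of a quasidiagonal algebra with a nuclear quasidiagonal algebra is quasidiagonal (see \cite{BO}).

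For the substantive ``if'' directions I would use that $C$ is a (full, hereditary) subalgebra of $C\otimes\Kk$, via $c\mapsto c\otimes e_{11}$. Hereditary subalgebras of AF-embeddable $C^*$-algebras are AF-embeddable (restrict the embedding), hereditary subalgebras of quasidiagonal $C^*$-algebras are quasidiagonal \cite[Section~7]{BO}, and hereditary subalgebras of AF algebras are AF. Hence if $C\otimes\Kk$ has one of these properties then so does $C$. Combining the two directions with $A\otimes\Kk\cong B\otimes\Kk$ gives the claim for AF-embeddability and quasidiagonality immediately: $A$ is AF-embeddable iff $A\otimes\Kk$ is iff $B\otimes\Kk$ is iff $B$ is, and similarly for quasidiagonality.

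The one place requiring a little more care is the ``AF'' statement, since ``$A$ is AF'' is usually taken to include separability (or at least a fixed inductive structure), and a corner of an AF algebra need not literally be presented as an inductive limit but is nonetheless AF by Elliott's characterisation (an approximately finite-dimensional $C^*$-algebra is one in which every finite subset is approximately contained in a finite-dimensional subalgebra). With that local characterisation the corner argument goes through verbatim, so $A\otimes\Kk\cong B\otimes\Kk$ AF forces $A$ and $B$ to be AF. I expect the main obstacle to be purely bookkeeping: making sure the ``hereditary subalgebra'' and ``tensor with nuclear quasidiagonal'' permanence results are invoked in the correct generality (in particular that no separability hypothesis is silently needed beyond what the ambient setting of the paper already supplies, where all $C^*$-algebras of interest are separable), rather than any genuine difficulty.
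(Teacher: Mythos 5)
Your proof is correct and follows essentially the same route as the paper's: both reduce the claim to checking that each of the three properties passes to stabilisations and to corners/subalgebras (the paper via the Brown--Green--Rieffel corner realisation, you via the corner $c\mapsto c\otimes e_{11}$ inside $A\otimes\Kk\cong B\otimes\Kk$). The only cosmetic difference is in the citations for quasidiagonality of stabilisations, where the paper quotes Hadwin and you use the tensor-product permanence from Brown--Ozawa.
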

\begin{proof}
Since $A$ and $B$ are stably isomorphic, $B$ is isomorphic to a corner of $A \otimes \Kk$
(and vice-versa), so we just have to check that each of the three properties passes to
stabilisations and to corners.

If $A = \overline{\bigcup A_n}$ is AF, then so is $A \otimes \Kk = \overline{\bigcup A_n
\otimes M_n(\CC)}$; and then if $\rho$ is an AF-embedding of $A$, then $\rho \otimes
1_\Kk$ is an AF-embedding of $A \otimes \Kk$. Quasidiagonality passes to stabilisations
by \cite[Corollary~15]{Hadwin}.

AF-embeddability clearly passes to subalgebras. So does quasidiagonality: if $\pi$ is
quasidiagonal representation of $A$, it restricts to a quasidiagonal representation of
each subalgebra. It is standard that corners of AF algebras are AF
\cite[Exercise~III.2]{Davidson}.
\end{proof}

A \emph{$k$-graph} is a countable category $\Lambda$ equipped with a map $d : \Lambda \to
\NN^k$, called the \emph{degree map}, that carries composition to addition and satisfies
the \emph{factorisation property}: if $d(\lambda) = m + n$ then $\lambda$ has a unique
factorisation $\lambda = \mu\nu$ such that $d(\mu) = m$ and $d(\nu) = n$. We write
$\Lambda^n := d^{-1}(n)$, and then the factorisation property implies that $\Lambda^0$ is
precisely the collection of identity morphisms in $\Lambda$. Hence the domain and
codomain maps determine functions $r,s : \Lambda \to \Lambda^0$ such that
$r(\lambda)\lambda = \lambda = \lambda s(\lambda)$ for all $\lambda \in \Lambda$. We call
these the range and source maps, and we call elements of $\Lambda^0$ \emph{vertices}, and
other elements of $\Lambda$ \emph{paths}. We use the notational convention where, for
example, $v\Lambda^n = \{\lambda \in \Lambda^n : r(\lambda) = v\}$. We say that $\Lambda$
is \emph{row-finite} if $v\Lambda^n$ is finite for all $n,v$, and that $\Lambda$ has
\emph{no sources} if $v\Lambda^n$ is nonempty for all $n,v$. For $\mu,\nu \in \Lambda$,
we write $\Hoho(\mu,\nu)$ for the set $\{(\alpha,\beta) \in s(\mu)\Lambda \times
s(\nu)\Lambda : \alpha\mu = \beta\nu \in \Lambda^{d(\mu) \vee d(\nu)}\}$.

If $\Lambda$ is a $k$-graph and $j \le k$, then the $j$th coordinate graph of $\Lambda$,
denoted $\Lambda^{\NN e_j}$, is the directed graph with vertices $\Lambda^0$, edges
$\Lambda^{e_j}$, and range and source maps inherited from $\Lambda$. We write $A_1,
\dots, A_k \in M_{\Lambda^0}(\NN)$ for the \emph{coordinate matrices} of $\Lambda$ given
by $A_i(v,w) = |v\Lambda^{e_i} w|$. In the specific case when $k = 2$, we call the first
coordinate graph the \emph{blue subgraph} and the second coordinate graph the \emph{red
subgraph}, and then call a path in the red subgraph a red path and so forth.

A red path $\lambda = \lambda_1 \lambda_2 \cdots \lambda_n \in \Lambda$ (where each
$\lambda_i \in \Lambda^{e_2}$) is called a \emph{red cycle} if $d(\lambda)\not= 0$,
$r(\lambda) = s(\lambda)$ and $s(\lambda_i) \not= s(\lambda_j)$ for $i \not= j$. It is a
\emph{red cycle with a red entrance} if there exists $i \le n$ such that
$r(\lambda_j)\Lambda^{e_2} \not= \{\lambda_j\}$; that is, if $\lambda$ is a cycle with an
entrance in the directed graph $(\Lambda^0, \Lambda^{e_2}, r, s)$.

The $C^*$-algebra $C^*(\Lambda)$ of a row-finite $k$-graph $\Lambda$ with no sources is
the universal $C^*$-algebra generated by elements $\{s_\lambda : \lambda \in \Lambda\}$
satisfying the \emph{Cuntz--Krieger relations}:
\begin{itemize}
\item[(CK1)] $\{s_v : v \in \Lambda^0\}$ is a collection of mutually orthogonal
    projections.
\item[(CK2)] $s_\lambda s_\mu = s_{\lambda\mu}$ whenever $s(\lambda) = r(\mu)$.
\item[(CK3)] $s^*_\lambda s_\lambda = s_{s(\lambda)}$ for all $\lambda \in \Lambda$.
\item[(CK4)] $s_v = \sum_{\lambda \in v\Lambda^n} s_\lambda s^*_\lambda$ for all $v
    \in \Lambda^0$ and $n \in \NN^k$.
\end{itemize}
It follows from these relations that $s_\mu^*s_\nu = \sum_{(\alpha,\beta) \in
\Hoho(\mu,\nu)} s_\alpha s^*_\beta$ for all $\mu,\nu$ (with the convention that the empty
sum is equal to zero). We often write $p_v$ rather than $s_v$ for the projection
associated to a vertex $v \in \Lambda^0$.

Every higher-rank graph $C^*$-algebra has a countable approximate identity of
projections: enumerate $\Lambda^0 = \{v_1, v_2, \dots\}$ and then put $e_n :=
\sum^n_{i=1} p_{v_n}$. It follows from \cite{BGR} that two $k$-graph $C^*$-algebras are
stably isomorphic if and only if they are Morita equivalent.

\section{Quasidiagonality of $k$-graph $C^*$-algebras}\label{sec:kgraphs}
In this section we prove part~(\ref{thm-main-item1}) of Theorem~\ref{thm-main} (see
Theorem~\ref{thm-for-main}). We first extend the ``easy" implications in Brown's theorem
\cite[Theorem~0.2]{Brown:JFA98} from $\ZZ$ to a general discrete group $G$. The other
implications in Proposition~\ref{prp:superNate} are well-known; we just record them for
ease of reference.

\begin{prp}\label{prp:superNate} Let $\alpha:G\to\Aut A$ be an action of a discrete group $G$
on  a $C^*$-algebra $A$. Let $H_\alpha$ be the subgroup of $K_0(A)$ generated by $\{(\id
-K_0(\alpha_g)) K_0(A) : g \in G\}$. Consider the following statements:
\begin{enumerate}
\item\label{it:AFe} $A \times_\alpha G$ is AF-embeddable;
\item\label{it:QD} $A \times_\alpha G$ is quasidiagonal;
\item\label{it:SF} $A \times_\alpha G$ is stably finite;
\item\label{it:K-th} $H_\alpha \cap K_0(A)^+ = \{0\}$.
\end{enumerate}
Then (\ref{it:AFe})${}\implies{}$(\ref{it:QD}) and
(\ref{it:QD})${}\implies{}$(\ref{it:SF}). If $K_0(A)$ has cancellation, then
(\ref{it:SF})${}\implies{}$(\ref{it:K-th}).
\end{prp}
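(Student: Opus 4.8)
The plan is to reduce everything to Brown's original theorem by a standard trick: pass from the action on $A$ to an action on the stabilisation $A \otimes \Kk$, and then dilate the group action so that Brown's $\ZZ$-theorem, or rather the parts of its proof that do not use $\ZZ$, can be invoked. Concretely, the implication $(\ref{it:AFe})\implies(\ref{it:QD})$ is immediate from Lemma~\ref{lem:basics} together with the fact that every AF algebra is quasidiagonal, since any subalgebra of a quasidiagonal algebra is quasidiagonal (a faithful quasidiagonal representation of the ambient AF algebra restricts to one of $A \times_\alpha G$). The implication $(\ref{it:QD})\implies(\ref{it:SF})$ is also general: a quasidiagonal $C^*$-algebra is stably finite, since quasidiagonality passes to $A \otimes \Kk$ by \cite[Corollary~15]{Hadwin}, and a quasidiagonal $C^*$-algebra with an approximate unit of projections cannot contain an infinite projection (an infinite projection would, via a block-diagonal approximation, force a proper isometry between finite-dimensional spaces). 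So the only implication requiring real work is $(\ref{it:SF})\implies(\ref{it:K-th})$ under the cancellation hypothesis.

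For $(\ref{it:SF})\implies(\ref{it:K-th})$ I would follow the contrapositive, exactly mirroring Brown's argument in \cite[Theorem~0.2]{Brown:JFA98}. Suppose $H_\alpha \cap K_0(A)^+ \ne \{0\}$, so there is a nonzero positive element $x \in K_0(A)^+$ that can be written as a finite sum $x = \sum_{j} (\id - K_0(\alpha_{g_j}))(y_j)$ with $y_j \in K_0(A)$ and $g_j \in G$. By adding and subtracting, and using that $K_0(\alpha_g)$ is a group automorphism, one can absorb the $y_j$ into projections: after stabilising and using cancellation in $K_0$, write each $y_j = [p_j] - [q_j]$ for projections $p_j, q_j$ in $A \otimes \Kk$, and likewise realise $x = [e] - [f]$. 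The key point is that the relation $[e] + \sum_j [\alpha_{g_j}(p_j)] + \sum_j[q_j] = [f] + \sum_j [p_j] + \sum_j [\alpha_{g_j}(q_j)]$ holds in $K_0$, so by cancellation these two orthogonal sums of projections are actually Murray--von Neumann equivalent in $(A \otimes \Kk) \otimes \Kk \cong A \otimes \Kk$. The canonical unitary implementing $\alpha_{g_j}$ inside the multiplier algebra of $(A\times_\alpha G)\otimes\Kk$ then lets one build a partial isometry in $(A\times_\alpha G)\otimes\Kk$ whose source and range projections differ by $[e]-[f] = x \ne 0$, which (since $x$ is positive and nonzero, so $e$ is not equivalent to $f$ but dominates a copy of it) exhibits an infinite projection. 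That contradicts stable finiteness of $A\times_\alpha G$.

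The main obstacle is bookkeeping rather than a conceptual hurdle: one must be careful that the canonical unitaries $u_g$ in $M(A \times_\alpha G)$ satisfying $u_g a u_g^* = \alpha_g(a)$ genuinely give $[\alpha_g(p)] = [p]$ via a partial isometry lying in $(A\times_\alpha G)\otimes\Kk$ (not merely in the multiplier algebra), and that the various stabilisations are tracked consistently so that the final infinite projection sits in $A\times_\alpha G$ itself (up to the stable isomorphism already permitted by the statement, since stable finiteness is a stable-isomorphism invariant for algebras with approximate units of projections). Apart from this, every step is word-for-word Brown's, with $\ZZ$ replaced by $G$ — the only place $\ZZ$ entered his argument for these three implications was through the same unitaries $u_g$, which exist for any discrete $G$. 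I would therefore present the proof briefly, citing \cite[Theorem~0.2]{Brown:JFA98} for the structure and only spelling out the point where the group is used.
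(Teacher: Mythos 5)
Your proposal is correct and follows essentially the same route as the paper: the first two implications are quoted as general facts, and (\ref{it:SF})${}\implies{}$(\ref{it:K-th}) is proved contrapositively by rearranging $\sum_j(\id-K_0(\alpha_{g_j}))([p_j]-[q_j])=[r]$ into an equality of classes of two block-diagonal projections, cancelling to get a partial isometry, and composing with the block-diagonal unitaries $\diag(u_{g_j},\dots,u_{g_j})$ to exhibit an infinite projection in a matrix algebra over $A\times_\alpha G$ (the paper isolates this as Lemma~\ref{lem:SNworkhorse}). The only cosmetic difference is that the paper represents the nonzero positive class directly as $[r]$ for a single projection $r$ rather than as a difference $[e]-[f]$.
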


We need a technical lemma to prove the implication
(\ref{it:SF})${}\implies{}$(\ref{it:K-th}) of the proposition. The proof is an adaptation
of Brown's proof of the corresponding implication in \cite[Theorem~0.2]{Brown:JFA98}.

\begin{lem}\label{lem:SNworkhorse}
Let $k\geq 1$ and  $\alpha:G\to\Aut A$ be an action of a discrete group $G$ on  a
$C^*$-algebra $A$. For $l \ge 1$, write $\alpha^l$ for  the action of $G$ on $M_l(A)$ by
entrywise application of $\alpha$. Let $H_\alpha$ be the subgroup of $K_0(A)$ generated
by
\[
    \{(\id -K_0(\alpha_g))(a) : g\in G, a \in  K_0(A)\}.
\]
Suppose that $g_1, \dots, g_k$ are elements of $G$, and that $p_i, q_i\ (1\leq i \le k)$
and $r$ are projections in $M_l(A)$ such that $\sum_i (\id -K_0(\alpha_{g_i}))([p_i] -
[q_i]) = [r]$ and $r \not= 0$. If $K_0(A)$ has cancellation, then
\[
\diag(\alpha^l_{g_1}(p_1), \dots, \alpha^l_{g_k}(p_k), q_1, \dots, q_k, 0)
\]
is an infinite projection in $M_{2k+1}(M_l(A \times_\alpha G))$.
\end{lem}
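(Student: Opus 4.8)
The plan is to mimic Brown's argument from \cite[Theorem~0.2]{Brown:JFA98}, but keeping track of a finite collection of group elements $g_1,\dots,g_k$ rather than the single generator of $\ZZ$. The key observation is that inside the crossed product $A\times_\alpha G$, for any projection $p\in M_l(A)$ and any $g\in G$, the element $\alpha^l_g(p)$ is Murray--von Neumann equivalent to $p$: the unitary $u_g$ implementing $\alpha_g$ (or more precisely, a partial isometry built from $p$ and $u_g$, namely $v=\sum_{\text{matrix entries}} (p u_g)$ suitably interpreted) witnesses $\alpha^l_g(p)\sim p$ in $M_l(A\times_\alpha G)$. Hence in $K_0(A\times_\alpha G)$ we have $[\alpha^l_g(p)]=[p]$, and consequently the class of the big diagonal projection
\[
P:=\diag\bigl(\alpha^l_{g_1}(p_1),\dots,\alpha^l_{g_k}(p_k),q_1,\dots,q_k,0\bigr)
\]
equals $\sum_i[p_i]+\sum_i[q_i]$ in $K_0(M_l(A\times_\alpha G))=K_0(A\times_\alpha G)$.

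Next I would use the hypothesis $\sum_i(\id-K_0(\alpha_{g_i}))([p_i]-[q_i])=[r]$. Under the inclusion $\iota:A\to A\times_\alpha G$, the map $K_0(\iota)$ intertwines $K_0(\alpha_g)$ with the identity, since (as above) $\alpha^l_g(p)\sim p$ in the crossed product. Applying $K_0(\iota)$ to the hypothesis therefore kills every term $K_0(\alpha_{g_i})([p_i]-[q_i])$ against $([p_i]-[q_i])$, giving $0=K_0(\iota)([r])$, i.e. $[r]=0$ in $K_0(A\times_\alpha G)$. Combined with the previous paragraph, $[P]=\sum_i[p_i]+\sum_i[q_i]=\sum_i[p_i]+\sum_i[q_i]+[r]$; but also, rearranging the hypothesis, $\sum_i[p_i]-\sum_i[q_i]-\sum_i K_0(\alpha_{g_i})([p_i]-[q_i])=[r]$, and applying $K_0(\iota)$ shows the same identity at the level of the crossed product collapses to a statement comparing $[P]$ with $[P']$, where $P'$ is $P$ with one extra summand. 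The upshot I am aiming for is an equality of $K_0$-classes of the form $[P]=[P\oplus r']$ for a nonzero projection $r'$ subequivalent to something already inside (a cut-down of $r$), exhibiting $[P]$ as dominating a strictly larger class.

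To convert this $K_0$-equality into an actual Murray--von Neumann subequivalence $P\sim P''<P$ of honest projections, I would invoke the cancellation hypothesis on $K_0(A)$: since $A$ and $A\times_\alpha G$ need not have cancellation, the trick (again Brown's) is that everything relevant is supported in the image of $K_0(A)\to K_0(A\times_\alpha G)$ coming from finitely many matrix algebras over $A$, and one can realise the equivalence already at a finite stage where cancellation in $K_0(A)$ applies, then push it forward. Concretely: cancellation in $K_0(A)$ lets us upgrade the equality $[\alpha^l_g(p)]=[p]$-type relations and the vanishing $[r]=0$ into genuine equivalences of projections in some $M_N(A\times_\alpha G)$ with controlled "remainder," and assembling these yields that $P$ is equivalent to a proper subprojection of itself, i.e. $P$ is infinite.

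The main obstacle I anticipate is precisely this last bookkeeping step: getting from $K_0$-level identities to concrete Murray--von Neumann equivalences without cancellation in the crossed product, and in particular making sure the "extra" projection one splits off is genuinely nonzero and genuinely a subprojection of $P$ (this is where $r\neq 0$ and cancellation in $K_0(A)$ both get used). The algebraic identity $\sum_i(\id-K_0(\alpha_{g_i}))([p_i]-[q_i])=[r]$ must be unwound carefully so that the $\alpha_{g_i}(p_i)$ and $q_i$ end up on the correct sides of the equivalence; I expect to need to conjugate $q_i$ by the canonical unitaries $u_{g_i}^*$ to line each $q_i$ up with the corresponding $\alpha^l_{g_i}(p_i)$, so that pairwise the diagonal blocks cancel in $K_0(A\times_\alpha G)$ and only $[r]$, now seen to be trivial in a way that leaves a nonzero subprojection behind, survives to witness infiniteness.
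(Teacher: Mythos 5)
Your proposal assembles the right raw materials --- the partial isometries $U_g p$ witnessing $\alpha^l_g(p)\sim p$ in the crossed product, the rearrangement of the hypothesis, and the recognition that cancellation must be used in $K_0(A)$ rather than in $K_0(A\times_\alpha G)$ --- but the step you flag as ``the main obstacle'' is the whole content of the lemma, and the order of operations you propose is what makes it look intractable. Passing first to $K_0(A\times_\alpha G)$, deducing $[r]=0$ there, and then trying to upgrade to a Murray--von Neumann statement is exactly where the argument stalls: without cancellation in the crossed product, an identity such as $[P]=[P\oplus r]$ only yields that some stabilised projection $P\oplus 1_m$ is infinite, and realising that $1_m$ (hence the infinite projection) inside a matrix algebra over $A\times_\alpha G$ is not automatic for a general, possibly nonunital, $A$. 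No such detour is needed.

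The missing idea is to apply cancellation \emph{before} passing to the crossed product. Rearranging the hypothesis gives an equality in $K_0(A)$ of the classes of two explicit projections in $M_{2k+1}(M_l(A))$,
\[
[\diag(p_1,\dots,p_k,\alpha^l_{g_1}(q_1),\dots,\alpha^l_{g_k}(q_k),0)]
 =[\diag(\alpha^l_{g_1}(p_1),\dots,\alpha^l_{g_k}(p_k),q_1,\dots,q_k,r)],
\]
and cancellation in $K_0(A)$ produces an honest partial isometry $V$ over $A$ between them. Separately, the diagonal partial isometry $W=\diag(U_{g_1}p_1,\dots,U_{g_k}p_k,q_1U_{g_1}^*,\dots,q_kU_{g_k}^*,0)$, built from the canonical unitaries exactly as you suggest, satisfies $W^*W=VV^*$ and $WW^*=\diag(\alpha^l_{g_1}(p_1),\dots,\alpha^l_{g_k}(p_k),q_1,\dots,q_k,0)$. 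Then $WV$ is a partial isometry with $(WV)^*(WV)=V^*V=\diag(\dots,r)$ and $(WV)(WV)^*=WW^*=\diag(\dots,0)$, so since $r\neq 0$ the projection $\diag(\dots,r)$ is equivalent to its proper subprojection $\diag(\dots,0)$. That is the required infiniteness, obtained with no stabilisation in the crossed product and with $K_0(A\times_\alpha G)$ never mentioned.
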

\begin{proof}
Rearranging the expression $\sum_i (\id -K_0(\alpha_{g_i}))([p_i] - [q_i]) = [r]$, we
have
\[
[\diag(p_1, \dots p_k, \alpha^l_{g_1}(q_1), \dots, \alpha^l_{g_k}(q_k), 0)]
    = [\diag(\alpha^l_{g_1}(p_1), \dots, \alpha^l_{g_k}(p_k), q_1, \dots, q_k, r)].
\]
Since $K_0(A)$ has cancellation, there is a partial isometry $V$ such that
\begin{align*}
VV^* &= \diag(p_1, \dots, p_k, \alpha^l_{g_1}(q_1), \dots, \alpha^l_{g_k}(q_k), 0),\\
V^*V &= \diag(\alpha^l_{g_1}(p_1), \dots, \alpha^l_{g_k}(p_k), q_1, \dots, q_k, r).
\end{align*}
Let $u : g \mapsto u_g$ be the universal unitary representation of $G$ in the multiplier
algebra $\Mm(A \times_\alpha G)$, and for each $i \le k$ let $U_{g_i} = \diag(u_{g_i},
\dots, u_{g_i}) \in M_l(\Mm(A \times_\alpha G))$ so that each $U_{g_i}$ implements
$\alpha^l_{g_i}$. A quick calculation shows that
\[
W := \diag(U_{g_1} p_1, \dots, U_{g_k} p_k, q_1U^*_{g_1}, \dots, q_k U^*_{g_k}, 0) \in M_l(A \times_\alpha G)
\]
satisfies
\begin{align*}
WW^* &= \diag(\alpha^l_{g_1}(p_1), \dots, \alpha^l_{g_k}(p_k), q_1, \dots, q_k, 0),\\
W^*W &= \diag(p_1, \dots p_k, \alpha^l_{g_1}(q_1), \dots, \alpha^l_{g_k}(q_k), 0) = VV^*,\\
(WV)^*(WV) &= V^*V = \diag(\alpha^l_{g_1}(p_1), \dots, \alpha^l_{g_k}(p_k), q_1, \dots, q_k, r),\\
(WV)(WV)^* &= WW^* = \diag(\alpha^l_{g_1}(p_1), \dots, \alpha^l_{g_k}(p_k), q_1, \dots, q_k, 0).
\end{align*}
Thus $WV$ is a partial isometry. Since $r\neq 0$, this shows that $V^*V$ is equivalent to
its proper subprojection $WW^*$, and hence is an infinite projection in $M_{2k+1}(M_l(A
\times_\alpha G))$.
\end{proof}

\begin{proof}[Proof of Proposition~\ref{prp:superNate}]
The implications (\ref{it:AFe})${}\implies{}$(\ref{it:QD}) and
(\ref{it:QD})${}\implies{}$(\ref{it:SF}) are special cases of the general facts that
every AF-embeddable $C^*$-algebra is quasidiagonal \cite[Propositions
7.1.9~and~7.1.10]{BO}, and every quasidiagonal $C^*$-algebra is stably finite
\cite[Proposition~7.1.15]{BO}.

For (\ref{it:SF})${}\implies{}$(\ref{it:K-th}) we adapt the proof of the corresponding
assertion in \cite[Theorem~0.2]{Brown:JFA98}, which proceeds by proving the
contrapositive statement. Assume that $K_0(A)$ is cancellative. Suppose that $H_\alpha
\cap K_0(A)^+ \not= \{0\}$. Thus there exist $l\ge 1$ and a nonzero projection $r\in
M_l(A)$ such that $[r]\in H_\alpha$. So there are finitely many elements $g_i \in G$ and
projections $p_i, q_i\in M_l(A)$ such that $\sum_i (\id -K_0(\alpha_{g_i}))([p_i] -
[q_i]) = [r]$. Now Lemma~\ref{lem:SNworkhorse} shows that there exists $L \ge 1$ such
that $M_L(A \times_\alpha G)$ contains an infinite projection. So $A \times_\alpha G$ is
not stably finite.
\end{proof}

The following presentation of the group $H_\alpha$ when $G$ is finitely generated allows
us to rephrase the $K$-theoretic condition~(\ref{it:K-th}) of
Proposition~\ref{prp:superNate} in terms of the $k$ adjacency matrices of a $k$-graph
when we apply it to prove
Theorem~\ref{thm-main}(\ref{thm-main-item1})---see~\eqref{eq:Halpha description} below.

\begin{lem}\label{it:helper}
Let $\alpha:G\to\Aut B$ be an action of a discrete group $G$ on a $C^*$-algebra $B$. Let
$H_\alpha$ be the subgroup of $K_0(B)$ generated by $\{(\id -K_0(\alpha_g)) K_0(B) : g
\in G\}$. If $g_1, \dots, g_k$ generate $G$, then $H_\alpha = \sum^k_{i=1} (\id
-K_0(\alpha_{g_i}))K_0(B)$.
\end{lem}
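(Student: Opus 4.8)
The plan is to prove the two inclusions separately. The inclusion $\sum_{i=1}^k(\id-K_0(\alpha_{g_i}))K_0(B)\subseteq H_\alpha$ is immediate from the definition of $H_\alpha$, since each summand is one of the generating subgroups. So the work is all in the reverse inclusion: I need to show that for an \emph{arbitrary} $g\in G$, the subgroup $(\id-K_0(\alpha_g))K_0(B)$ is contained in $N:=\sum_{i=1}^k(\id-K_0(\alpha_{g_i}))K_0(B)$.

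First I would reduce to a statement purely about the homomorphism $\beta:=K_0(\alpha):G\to\Aut(K_0(B))$ (a group homomorphism into the automorphism group of the abelian group $K_0(B)$), so that the claim becomes: if $g_1,\dots,g_k$ generate $G$ and $N=\sum_i(\id-\beta_{g_i})K_0(B)$, then $(\id-\beta_g)K_0(B)\subseteq N$ for every $g\in G$. The key algebraic identity is the cocycle-type relation $\id-\beta_{gh}=(\id-\beta_g)+\beta_g(\id-\beta_h)$, together with $\id-\beta_{g^{-1}}=-\beta_{g^{-1}}(\id-\beta_g)$. I would first observe that $N$ is invariant under every $\beta_{g_i}$: indeed $\beta_{g_i}(\id-\beta_{g_j})K_0(B)=(\id-\beta_{g_j})K_0(B)+(\id-\beta_{g_i})\big((\id-\beta_{g_j})K_0(B)-K_0(B)\big)$ — more cleanly, from the identity one gets $\beta_{g_i}(\id-\beta_{g_j})=(\id-\beta_{g_i g_j})-(\id-\beta_{g_i})$, whose image lies in $N$; and similarly $\beta_{g_i}^{-1}(\id-\beta_{g_j})$ has image in $N$ using the inverse identity. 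Hence $N$ is invariant under the subgroup generated by $g_1^{\pm1},\dots,g_k^{\pm1}$, which is all of $G$.

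With $\beta_g(N)=N$ for all $g\in G$ in hand, I would finish by induction on word length: writing $g=h_1h_2\cdots h_m$ with each $h_j\in\{g_1^{\pm1},\dots,g_k^{\pm1}\}$, the identity $\id-\beta_{g}=\sum_{j=1}^m\beta_{h_1\cdots h_{j-1}}(\id-\beta_{h_j})$ (a telescoping application of the cocycle relation) exhibits $(\id-\beta_g)K_0(B)$ as a sum of sets each of which is $\beta_{(\cdot)}$ applied to some $(\id-\beta_{h_j})K_0(B)$. Since $(\id-\beta_{g_i^{-1}})K_0(B)=\beta_{g_i^{-1}}(\id-\beta_{g_i})K_0(B)\subseteq\beta_{g_i^{-1}}(N)\subseteq N$, each $(\id-\beta_{h_j})K_0(B)\subseteq N$, and applying the $G$-invariant $\beta$'s keeps us inside $N$. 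Therefore $(\id-\beta_g)K_0(B)\subseteq N$, completing the proof.

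I expect the only mild subtlety to be bookkeeping with inverses — handling $\beta_{g^{-1}}$ requires the separate identity $\id-\beta_{g^{-1}}=-\beta_{g^{-1}}(\id-\beta_g)$ and the observation that this has image $\beta_{g^{-1}}\big((\id-\beta_g)K_0(B)\big)$, which is why establishing full $G$-invariance of $N$ (rather than just invariance under the $g_i$ themselves) is the clean way to organise the argument. There is no real obstacle here; it is an entirely elementary group-theoretic computation once the cocycle identity is isolated, and I would keep the write-up short.
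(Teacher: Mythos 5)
Your proposal is correct and is essentially the paper's argument: both rest on telescoping the cocycle identity over a word $g=h_1\cdots h_m$ in the generators and their inverses, with the inverses handled via $\id-K_0(\alpha_{g^{-1}})=-(\id-K_0(\alpha_g))\circ K_0(\alpha_{g^{-1}})$. The only organisational difference is that the paper writes each telescoped term as $(\id-K_0(\alpha_{h_j}))$ applied to an element of $K_0(B)$ (automorphism on the inside rather than the outside), so every term lands directly in a generating subgroup and your separate step establishing $G$-invariance of $N$ — which is where your write-up is loosest, since the image of $\id-\beta_{g_ig_j}$ is not yet known to lie in $N$ at that point and you need your first, "messier" identity to make it rigorous — becomes unnecessary.
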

\begin{proof}
It helps to name the right-hand side, so we set $R := \sum^k_{i=1} (\id
-K_0(\alpha_{g_i}))K_0(B)$. Clearly $R \subseteq H_\alpha$. For the other inclusion we
first  observe that each
\[
    (\id -K_0(\alpha_{g_i^{-1}}))K_0(B) = (K_0(\alpha_{g_i}) - 1) K_0(\alpha_{g_i^{-1}})K_0(B) \subseteq R.
\]
Fix $g \in G$, and write $g = h_1 \dots h_n$ where each $h_j \in \{g_1, \dots, g_k,
g^{-1}_1, \dots, g^{-1}_k\}$. For $a \in K_0(B)$, we have
\begin{align*}
\big(\id - K_0(\alpha_g)\big)a
    &= (\id -K_0(\alpha_{h_n}))a + (\id -K_0(\alpha_{h_{n-1}}))K_0(\alpha_{h_n}(a))\\
    &\quad+(\id -K_0(\alpha_{h_{n-2}}))K_0(\alpha_{h_{n-1}h_n}(a))
        + \cdots + (\id -K_0(\alpha_{h_1}))K_0(\alpha_{h_1^{-1}g}(a)).
\end{align*}
Each term on the right-hand side belongs to $R$. Since $R$ is closed under addition, it
follows that $\big(\id - K_0(\alpha_g)\big)a \in R$. Using again that $R$ is closed under
addition, we deduce that $H_\alpha \subseteq R$.
\end{proof}

The natural question is under what circumstances some or all of the reverse implications
in Proposition~\ref{prp:superNate} also hold. We consider this question in the context of
$k$-graphs, so our first order of business is to recall how to realise a $k$-graph
$C^*$-algebra, up to stable isomorphism, as a crossed-product of an AF algebra, and
relate condition~(\ref{it:K-th}) of Proposition~\ref{prp:superNate} to the coordinate
matrices of the $k$-graph.

Recall from \cite[Definition~5.1]{KP} that if $\Lambda$ is a row-finite $k$-graph with no
sources, and $c : \Lambda \to G$ is a functor into a discrete abelian group, then the
\emph{skew-product} $k$-graph $\Lambda \times_c G$ is equal as a set to $\Lambda \times
G$, and has structure maps \[r(\lambda, g) = (r(\lambda), g), \quad s(\lambda,g) =
(s(\lambda), g + c(\lambda)),\] $(\lambda, g)(\mu, g + c(\lambda)) = (\lambda\mu, g)$
and
$d(\lambda, g) = d(\lambda)$. There is an action $\beta^c$ of $\widehat{G}$ on
$C^*(\Lambda)$ such that $\beta^c_\chi(s_\lambda) = \chi(c(\lambda)) s_\lambda$ for all
$\lambda$, and there is an isomorphism
\[
    C^*(\Lambda \times_c G) \cong C^*(\Lambda) \times_{\beta^c} \widehat{G}
\]
that carries $s_{(\lambda, g)}$ to the function $\chi \mapsto \chi(g)s_\lambda \in
C(\widehat{G}, C^*(\Lambda))$.

For $n \in \NN^k$, we write $A_n$ for the element of $M_{\Lambda^0}(\NN)$ given by
$A_n(v,w) = |v\Lambda^n w|$. So $A_n^t(v,w) = |w \Lambda^n v|$. We then have $A_m A_n =
A_{m+n}$ by the factorisation property. Thus putting $G_n := \ZZ \Lambda^0$ for each $n
\in \ZZ^k$, and defining $A^t_{m,n} : G_m \to G_n$ for $m \le n \in \ZZ^k$ by $A^t_{m,n}
:= A^t_{n-m}$, we can form the direct limit $\varinjlim_{m \in \ZZ^k}(\ZZ \Lambda^0,
A^t_m)$. We will denote by $A^{t}_{n,\infty } : G_{n} = \mathbb{Z}\Lambda^{0} \to
\varinjlim (\mathbb{Z}\Lambda^{0}, A^{t}_{m})$ the canonical homomorphism such that
$A^{t}_{n,\infty } \circ A^{t}_{m,n} = A^{t} _{m,\infty }$ for all $m \le n$. We continue
to write $A_{i}$ for $A_{e_{i}}$ for $1 \le i \le k$; to avoid confusion, we will avoid
using the pronumerals $i,j$ for elements of $\mathbb{N}^{k}$.

\begin{lem}\label{lem:Halpha and matrix}
Let $\Lambda$ be a row-finite $k$-graph with no sources. The $C^*$-algebra $C^*(\Lambda
\times_d \ZZ^k)$ is an AF algebra, and there is an order-isomorphism $K_0(C^*(\Lambda
\times_d \ZZ^k)) \cong \varinjlim_{n \in \ZZ^k}(\ZZ \Lambda^0, A^t_n)$ that carries
$[p_{(v,n)}]$ to $A_{n,\infty}\delta_v$ for all $v,n$. There is an action $\alpha : \ZZ^k
\to \Aut C^*(\Lambda \times_d \ZZ^k)$ such that $\alpha_m(s_{(\lambda,n)}) = s_{(\lambda,
m-n)}$ for all $\lambda \in \Lambda$ and $n \le m \in \NN^k$. The $C^*$-algebra
$C^*(\Lambda)$ is stably isomorphic to $C^*(\Lambda \times_d \ZZ^k) \times_\alpha \ZZ^k$.
As in Proposition~\ref{prp:superNate}, write $H_\alpha$ for the subgroup of
$K_0(C^*(\Lambda \times_d \ZZ^k))$ generated by $\{(\id - K_0(\alpha_n)) [p_{(v,m)}] : n
\in \ZZ^k, (v,m) \in (\Lambda \times_d \ZZ^k)^0\}$. Then
\[\textstyle
H_\alpha \cap K_0(C^*(\Lambda \times_d \ZZ^k))^+ = \{0\}
    \quad\text{ if and only if }\quad
\big(\sum_{i=1}^k\image(1-A^t_i) \big) \cap \NN{\Lambda^0} = \{0\}.
\]
\end{lem}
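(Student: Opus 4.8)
The plan is to establish the claimed equivalence by unwinding both sides into statements about the direct limit group $\varinjlim_{n \in \ZZ^k}(\ZZ\Lambda^0, A^t_n)$, using the explicit description of $K_0$ furnished by the first part of the lemma. First I would record the structural facts: $C^*(\Lambda \times_d \ZZ^k)$ is AF because $\Lambda \times_d \ZZ^k$ has no cycles (each edge strictly increases the $\ZZ^k$-coordinate in its source), so by \cite{KP} its $C^*$-algebra is a direct limit of finite-dimensional algebras, with $K_0$ the stated direct limit and positive cone the image of the maps $A^t_{n,\infty}$ applied to $\NN\Lambda^0$. The action $\alpha$ is the dual action coming from the standard realisation of $C^*(\Lambda)$ as a crossed product, and on $K_0$ it acts by $K_0(\alpha_m)(A^t_{n,\infty}\delta_v) = A^t_{n-m,\infty}\delta_v$ (shifting the level), which is the translation automorphism of the direct limit.

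Next I would compute $H_\alpha$ concretely. Since $\ZZ^k$ is generated by $e_1,\dots,e_k$, Lemma~\ref{it:helper} gives $H_\alpha = \sum_{i=1}^k (\id - K_0(\alpha_{e_i})) K_0(C^*(\Lambda \times_d \ZZ^k))$. Each element of $K_0$ has the form $A^t_{n,\infty} a$ for some $n \in \ZZ^k$ and $a \in \ZZ\Lambda^0$, and $(\id - K_0(\alpha_{e_i}))(A^t_{n,\infty}a) = A^t_{n,\infty}a - A^t_{n-e_i,\infty}a = A^t_{n-e_i,\infty}((A^t_i - 1)a)$, using $A^t_{n,\infty} = A^t_{n-e_i,\infty}\circ A^t_i$. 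So $H_\alpha = \sum_{i=1}^k \{A^t_{n,\infty}((A^t_i-1)a) : n \in \ZZ^k,\ a \in \ZZ\Lambda^0\}$, i.e.\ $H_\alpha$ is the image under the colimit maps of the subgroup $\sum_i \image(1 - A^t_i) \subseteq \ZZ\Lambda^0$, translated to all levels. The positive cone $K_0^+$ consists of the colimit images of $\NN\Lambda^0$ at all levels. The key point is that the colimit map identifications are generated precisely by $a \sim A^t_i a$, so an element $A^t_{0,\infty}x$ (with $x \in \NN\Lambda^0$) lies in $H_\alpha$ if and only if, after multiplying both $x$ and some element $y \in \sum_i\image(1-A^t_i)$ by a suitable power $A^t_N$ (with $N \ge 0$ large in every coordinate), we have $A^t_N x = A^t_N y$ in $\ZZ\Lambda^0$.

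The crux is therefore to show that the matrix condition is stable under the multiplications by $A^t_N$ that appear in direct-limit identifications. Concretely: if $\big(\sum_i \image(1-A^t_i)\big) \cap \NN\Lambda^0 = \{0\}$ and $A^t_N x$ lies in $\sum_i \image(1-A^t_i)$ for some $x \in \NN\Lambda^0$ and $N \ge 0$, I must conclude $x = 0$. Here I would use that $\sum_i \image(1-A^t_i)$ is invariant under each $A^t_i$ (because $A^t_i(1-A^t_j) = (1-A^t_j)A^t_i$, the coordinate matrices commuting by the factorisation property), so $\sum_i\image(1-A^t_i)$ is an $A^t_N$-invariant subgroup; combined with a cofinality-free argument that $A^t_N x \in \sum_i\image(1-A^t_i)$ forces $x$ itself into a related set — more precisely, I would exploit that $A^t_N x - x = \sum_i \image(1 - A^t_i)$-type telescoping (writing $A^t_N - 1$ as a sum of terms each divisible by some $1 - A^t_i$), so that $x = A^t_N x - (A^t_N x - x) \in \sum_i\image(1-A^t_i)$, and then $x \in \NN\Lambda^0 \cap \sum_i\image(1-A^t_i) = \{0\}$. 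The reverse implication is the contrapositive and easier: a nonzero $x \in \NN\Lambda^0 \cap \sum_i\image(1-A^t_i)$ maps to a nonzero positive element of $H_\alpha$, the nonvanishing in the colimit following because no power $A^t_N$ annihilates a nonzero element of $\NN\Lambda^0$ (as $A_N$ has no zero columns, $\Lambda$ having no sources). I expect the main obstacle to be the careful bookkeeping in the telescoping identity showing $A^t_N - 1 \in \sum_i (1-A^t_i)\ZZ[A^t_1,\dots,A^t_k]$ and tracking that the resulting element of $\sum_i\image(1-A^t_i)$ stays there, rather than any conceptual difficulty.
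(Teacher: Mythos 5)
Your proposal follows essentially the same route as the paper's proof: identify $K_0(C^*(\Lambda\times_d\ZZ^k))$ with $\varinjlim(\ZZ\Lambda^0, A^t_n)$, with positive cone $\bigcup_m A^t_{m,\infty}(\NN\Lambda^0)$; use Lemma~\ref{it:helper} to compute $H_\alpha$ as the union over levels of the images of $\sum_i\image(1-A^t_i)$; and then pass the intersection condition back and forth through the colimit using (a) injectivity of each $A^t_{n,\infty}$ on $\NN\Lambda^0$ (the matrices $A^t_n$ have no zero columns because $\Lambda$ has no sources) and (b) invariance of $\sum_i\image(1-A^t_i)$ under the commuting connecting maps. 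Your treatment of the hard direction differs only cosmetically: after reducing to $A^t_N x\in\sum_i\image(1-A^t_i)$ with $x\in\NN\Lambda^0$, you invoke the telescoping identity $1-A^t_N\in\sum_i(1-A^t_i)\,\ZZ[A^t_1,\dots,A^t_k]$ to conclude that $x$ itself lies in the forbidden intersection, whereas the paper simply notes that $A^t_N x$ is already a nonzero element of $\NN\Lambda^0$ lying in the invariant subgroup. Both are valid; the paper's version saves a step.

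One genuine error of reasoning, though not a fatal one: you justify AF-ness of $C^*(\Lambda\times_d\ZZ^k)$ by saying the skew product ``has no cycles'', so that \cite{KP} applies. For $k\ge 2$ the absence of cycles in a $k$-graph does \emph{not} imply that its $C^*$-algebra is AF --- this is exactly the phenomenon illustrated in Section~\ref{sec:examples}, where the acyclic $2$-graphs with skeleton~\eqref{eq:generic2graph} can have purely infinite $C^*$-algebras. What you actually need is the specific fact that the skew product by the degree functor is AF, which is indeed proved in \cite{KP}; the paper reproves it by exhibiting the filtration of $C^*(\Lambda\times_d\ZZ^k)$ by the subalgebras $B_m\cong\bigoplus_{v}\Kk(\ell^2(\Lambda v))$ coming from \cite{KPSv}, and it is this filtration (not acyclicity) that also delivers the asserted description of $K_0$ and its positive cone. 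So your citation lands on a correct result, but the stated reason should be replaced by the filtration argument.
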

\begin{proof}
To help keep notation manageable, let $B := C^*(\Lambda \times_d \ZZ^k)$. For $m \in
\ZZ^k$, let $B_m := \clsp\{s_{(\mu, m - d(\mu))} s^*_{(\nu, m - d(\nu))} : s(\mu) =
s(\nu)\}$. Observe that the map $b : (\Lambda \times_d \ZZ^k)^0 \to \ZZ^k$ given by
$b(\mu,m) = m$ satisfies $d(\mu,m) = b(s(\mu,m)) - b(r(\mu,m))$ for all $m$; so in the
language of \cite[Lemma~4.1]{KPSv}, we have $\underline{\delta}^0 b = d$. Applying
Lemma~4.1 of \cite{KPSv}
with $A$ equal to the trivial group $\{0\}$ and $c$ the
trivial cocycle, we deduce that for each $m$ there is an isomorphism $\pi_m : B_m \to
\bigoplus_{v \in \Lambda^0} \Kk(\ell^2(\Lambda v))$ such that
\[
\pi_m(s_{(\mu, m - d(\mu))} s^*_{(\nu, m - d(\nu))}) = \theta_{\mu,\nu} \in \Kk(\ell^2(\Lambda s(\mu)))
\subseteq \bigoplus_{v \in \Lambda^0} \Kk(\ell^2(\Lambda v)).
\]
Applying \cite[Theorem~4.2]{KPSv}, we see further that for $m \le n \in \NN^k$ there is
an endomorphism $j_{m,n} : \bigoplus_{v \in \Lambda^0} \Kk(\ell^2(\Lambda v)) \to
\bigoplus_{v \in \Lambda^0} \Kk(\ell^2(\Lambda v))$ such that $j_{m,n}(\theta_{\mu,\nu})
= \sum_{\alpha \in s(\mu)\Lambda^{n-m}} \theta_{\mu\alpha,\nu\alpha}$, that $\pi_n \circ
j_{m,n} = \pi_m$, and that there is an isomorphism
\[
\pi_{\infty} : B \to \varinjlim\Big(\bigoplus_{v \in \Lambda^0} \Kk(\ell^2(\Lambda v)), j_{m,n}\Big)
\]
such that $\pi_{\infty}(s_{(\mu, m - d(\mu))} s^*_{(\nu, m - d(\nu))}) =
j_{m,\infty}(\theta_{\mu,\nu})$ for all $\mu,\nu$. Hence $B = \overline{\bigcup_n B_n}$
is AF.

The induced map $(j_{m,n})_*$ on $K_0$ satisfies
\[
(j_{m,n})_*([\theta_{v,v}]) = \sum_{\alpha \in \Lambda^m v} [\theta_{\alpha,\alpha}]
    = \sum_{w \in \Lambda^0} |v\Lambda^{n-m} v| [\theta_{w,w}].
\]
Hence, after identifying $K_0\big(\bigoplus_{v \in \Lambda^0} \Kk(\ell^2(\Lambda
v))\big)$ with $\ZZ \Lambda^0$ via $[\theta_{v,v}] \mapsto \delta_v$, we see that the map
$(j_{m,n})_*$ is implemented by $A^t_{n-m}$. We have now arrived at the desired
description of $K_0(B)$: There is an isomorphism $\rho : K_0(B) \to \varinjlim_{m \in
\ZZ^k}(\ZZ\Lambda^0, A^t_{m})$ such that
\[
\rho([s_{(\mu,m-d(\mu))} s^*_{(\mu,m-d(\mu))}])
    = \rho([s_{(s(\mu),m)}]) = A^t_{m,\infty} \delta_v
\]
for all $v \in \Lambda^0$, $\mu \in \Lambda v$ and $m \in \ZZ^k$. Moreover, this $\rho$
satisfies
\[
\rho(K_0(B)^+) = \bigcup_{m \in \NN^k} A_{m,\infty}^t(\NN\Lambda^0).
\]

Corollary~5.3 of \cite{KP} implies that
\[
    B \cong C^*(\Lambda)\times_\gamma\TT^k
\]
where $\gamma$ is the gauge action.  This isomorphism carries the inverse of the dual
action $\hat\gamma$ on $C^*(\Lambda) \times_\gamma \TT^k$ to an action $\alpha$ of
$\ZZ^k$ on $B$ such that $\alpha_m(s_{(\mu, n)})=s_{(\mu, n-m)}$ as claimed. Hence
$B\times_\alpha\ZZ^k \cong
C^*(\Lambda)\times_\gamma\TT^k\times_{\hat{\gamma}^{-1}}\ZZ^k$, which is stably
isomorphic to $C^*(\Lambda)$ by Takai duality \cite{Takai}.

To understand $H_\alpha$, we next describe the action of $\ZZ^k$ on $K_0(B)$ induced by
$\alpha$ in terms of the isomorphism $\rho$ we have just described. The action $\alpha :
\ZZ^k \to \Aut B$ satisfies
\[
\rho([\alpha_n(p_{(v,m)})])
    = \rho([p_{(v,m-n)}])
    = A^t_{m-n,\infty} \delta_v
    = A^t_{m,\infty} (A^t_n \delta_v).
\]
Since $\rho([p_{v,m}]) = A^t_{m,\infty}(\delta_v)$, we deduce that the action $\beta :
\ZZ^k \to \Aut K_0(B)$ induced by $\alpha$ is characterised by $\beta_n \circ
A^t_{m,\infty} = A^t_{m,\infty} \circ A^t_n$.

Lemma~\ref{it:helper} gives
\begin{equation}\label{eq:Halpha description}
H_\alpha = \sum^k_{i=1} (1 - K_0(\alpha_{e_i}))(K_0(B)),
\end{equation}
and hence
\[
\rho(H_\alpha) = \sum^k_{i=1} (1 - \beta_{e_i}) \varinjlim(\ZZ\Lambda^0, j_{m,n})
    = \bigcup_{m \in \ZZ^k} A^t_{m,\infty} \Big(\sum^k_{i=1} (1 - A^t_i) \ZZ\Lambda^0\Big).
\]

Now to prove the final statement of the lemma, first suppose that $H_\alpha \cap K_0(B)^+
= \{0\}$. Then
\[
\{0\} = \rho(H_\alpha) \cap \rho(K_0(B)^+)
    \supseteq A_{0,\infty}^t\Big(\Big(\sum^k_{i=1} (1 - A_i^t)\ZZ\Lambda^0\Big) \cap \NN\Lambda^0\Big).
\]
So it suffices to show that each $A^t_{0,\infty}$ is injective on $\NN\Lambda^0$. We have
\[
\ker A^t_{0,\infty} = \bigcup_{n \in \NN^k} \ker A^t_n.
\]
Since $\Lambda$ has no sources, each $A^t_n$ is a nonnegative integer matrix with no zero
columns, giving $\ker A^t_n \cap \NN\Lambda^0 = \{0\}$ as required. Hence
$\Big(\sum^k_{i=1} (1 - A_i^t)\ZZ\Lambda^0\Big) \cap \NN\Lambda^0 = \{0\}$.

Now suppose that $H_\alpha \cap K_0(B)^+ \not= \{0\}$. For each $n$, let
$\widetilde{A}_n^t$ denote the automorphism of $\varinjlim \ZZ\Lambda^0$ induced by
$A_n^t : \ZZ \Lambda^0 \to \ZZ\Lambda^0$. Using the isomorphism $\rho$ and
equation~\eqref{eq:Halpha description}, we can choose $a_1, \dots, a_k \in \varinjlim
\ZZ\Lambda^0$ such that $\sum^k_{i=1}(1 - \widetilde{A}^t_{e_i})a_i \in (\varinjlim
\ZZ\Lambda^0)^+ \setminus \{0\}$. For each $i$, we can choose $n_i \in \ZZ^k$ and
$a_{i,0} \in \ZZ \Lambda^0$ such that $a_i = A_{n_i,\infty}(a_{i,0})$. Putting $n =
\bigvee_i n_i$ and $a_{i,1} := A^t_{n_i, n}(a_{i,0})$ for each $i$, we then have $a_i =
A^t_{n,\infty} a_{i,1}$ for each $i$. Now
\[
\sum^k_{i=1}(1 - \widetilde{A}^t_{e_i}) a
    = \sum^k_{i=1}(1 - \widetilde{A}^t_{e_i}) A^t_{n,\infty}(a_{i,1})
    = A^t_{n,\infty}\Big(\sum^k_{i=1}(1 - A^t_i) a_{i,1}\Big)
\]
belongs to $(\varinjlim\ZZ\Lambda^0)^+ \setminus \{0\}$. We therefore have
$A^t_{n,\infty}\Big(\sum^k_{i=1}(1 - A^t_i) a_{i,1}\Big) = A^t_{m,\infty}(c)$ for some $m
\in \ZZ^k$ and some $c \in \NN\Lambda^0$. Again, replacing each of $m,n$ with $m\vee n$
and applying the connecting maps, we can assume that $m = n$. So
\[
\sum^k_{i=1} (1 - A^t_i) a_{i,1} - c \in \ker A^t_{n,\infty} = \bigcup_{p \ge n} \ker A^t_p,
\]
so there exists $p$ such that $A^t_p\big(\sum^k_{i=1} (1 - A^t_i) a_{i,1} - c\big) = 0$.
We have $A^t_p c \in \NN \Lambda^0$ because $A^t_p$ is a positive matrix, and $A^t_p c
\not= 0$ because $\Lambda$ has no sources. So the elements $a_{i,2} := A^t_p a_{i,1}$
satisfy
\[
\sum^k_{i=1} (1 - A^t_i) a_{i,2}
    = A^t_p\Big(\sum^k_{i=1}(1 - A^t_i)a_{i,1}\Big)
    = A^t_p c
    \in \Big(\sum_{i=1}^k\image(1-A^t_i) \Big) \cap \NN{\Lambda^0} \setminus\{0\}.\qedhere
\]
\end{proof}

Before we can prove part~(\ref{thm-main-item1}) of our main result
Theorem~\ref{thm-main}, we need to investigate how to find graph traces on $k$-graphs.
For a row-finite $k$-graph $\Lambda$ with no sources, a function
$g:\Lambda^0\to[0,\infty)$ is a \emph{graph trace\label{pg:graphtrace} on $\Lambda$} if
\[
g(v)=\sum_{\lambda\in v\Lambda^n} g(s(\lambda))
\]
for all $v\in\Lambda^0$ and $n\in\NN^k$  \cite[Definition~3.5]{PRS}. A graph trace
$g$ is \emph{faithful} if $g(v)\neq 0$ for all  $v\in\Lambda^0$.

For the proof of the following result, we use the Separating-Hyperplane Theorem. For this
we recall some terminology. As in \cite[Theorem~1.3]{Tyr}, a \emph{hyperplane} in $\RR^n$
is a subset of the form $P = \{x \in \RR^n : x\cdot \xi = t\}$ for some $t \in R$ and
$\xi \in \RR^n \setminus \{0\}$. The sets $H_1 = \{x \in \RR^n : x \cdot \xi < t\}$ and
$H_2 = \{x \in \RR^n : x \cdot (-\xi) < -t\}$ are the \emph{open half-spaces determined
by $P$}. A set $A \subseteq \RR^n$ is \emph{affine} if $\{\lambda x + (1 - \lambda) y :
\lambda \in \RR\} \subseteq A$ whenever $x,y \in A$. A convex set $C \subseteq \RR^n$ is
relatively open if it is open in the relative topology on the smallest affine subset of
$\RR^n$ that contains it. The Separating Hyperplane Theorem \cite[Theorem~11.2]{Tyr} says
that given a relatively open convex set $C$ and an closed affine set $X$ in $\RR^n$ such
that $C \cap X = \emptyset$, there is a hyperplane $P$ such that $X \subseteq P$ and $C$
is contained in one of the half-spaces determined by $P$. Putting all the terminology
together, we obtain the consequence of the theorem that we will want to apply:

\begin{thm}[Separating-Hyperplane Theorem]\label{SHT}
Suppose that $C \subseteq \RR^n$ is convex and open, that $X \subseteq \RR^n$ is affine,
and that $C \cap X = \emptyset$. Then there exist a nonzero vector $\xi \in \RR^n$ and a
real number $t$ such that $\xi \cdot x = t$ for all $x \in X$ and $t < \xi \cdot p$ for
all $p \in C$.
\end{thm}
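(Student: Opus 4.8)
The plan is to derive Theorem~\ref{SHT} directly from the version of the Separating-Hyperplane Theorem recalled immediately before it (\cite[Theorem~11.2]{Tyr}), the entire content being a translation between the two formulations. I do not expect a genuine obstacle here: the only points that require care are the passage between ``affine'' and ``closed'', between ``open'' and ``relatively open'', and the handedness of the half-space.

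First I would record two routine preliminaries. An affine subset of $\RR^n$ is a translate of a linear subspace, equivalently the solution set of a finite system of linear equations, so $X$ is automatically closed. And a nonempty open convex set $C$ is relatively open, because its affine hull is all of $\RR^n$, so the relative topology on that hull is the usual topology of $\RR^n$, in which $C$ is open. We may also assume $C\neq\emptyset$: if $C=\emptyset$ the asserted strict inequality holds vacuously, and one may then take for the separating hyperplane any hyperplane containing $X$, which exists in every situation in which we apply the theorem (there $X$ is a proper affine subspace of $\RR^n$).

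Next I would invoke \cite[Theorem~11.2]{Tyr} in the form stated in the excerpt: since $C$ is a relatively open convex set, $X$ is a closed affine set, and $C\cap X=\emptyset$, there is a hyperplane $P=\{x\in\RR^n : x\cdot\eta = s\}$, with $\eta\neq 0$ and $s\in\RR$, such that $X\subseteq P$ and $C$ is contained in one of the two open half-spaces determined by $P$; that is, $C\subseteq\{x : x\cdot\eta < s\}$ or $C\subseteq\{x : x\cdot(-\eta) < -s\}$.

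Finally I would read off the conclusion by orienting the normal so that $C$ lies on the ``$>$'' side. If $p\cdot\eta > s$ for every $p\in C$, set $\xi:=\eta$ and $t:=s$; otherwise $p\cdot\eta < s$ for every $p\in C$, and set $\xi:=-\eta$ and $t:=-s$. In either case $\xi\neq 0$; since $X\subseteq P$ we get $\xi\cdot x = t$ for all $x\in X$; and by the choice of orientation $t<\xi\cdot p$ for all $p\in C$. This is exactly the assertion of Theorem~\ref{SHT}. The closest thing to a pitfall is the sign bookkeeping in this last step, but it is entirely mechanical.
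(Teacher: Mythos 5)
Your derivation is correct and is exactly what the paper does: the paper offers no separate proof, simply presenting Theorem~\ref{SHT} as the result of ``putting all the terminology together'' from \cite[Theorem~11.2]{Tyr}, i.e.\ the same translation (affine sets are closed, open convex sets are relatively open, then orient the normal) that you carry out explicitly. Your explicit handling of the case $C=\emptyset$ is a small extra courtesy the paper omits, and it is harmless since that case never arises in the application.
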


Recall (see Definition~2.1 and Examples~1.7(ii) of \cite{KP}) that an infinite path in a
$k$-graph $\Lambda$ is a map $x : \{(m,n) \in \NN^k : m \le n\} \to \Lambda$ such that
$d(x(m,n)) = n-m$ and $x(m,n)x(n,p) = x(m,p)$ for all $m \le n \le p$. We say that
$\Lambda$ is \emph{cofinal}\label{pg:cofinal} if, for every $v \in \Lambda^0$ and every
infinite path $x$ there exists $n \in \NN^k$ such that $v \Lambda x(n,n) \not=
\emptyset$. Proposition~A.2 of \cite{LewinSims:MPCPS10} shows that $\Lambda$ is cofinal
if and only if, for all $v,w$ in $\Lambda^0$, there exists $n \in \NN^k$ such that $v
\Lambda s(\lambda) \not= \emptyset$ for all $\lambda \in w\Lambda^n$.

We say $H \subseteq \Lambda^0$ is \emph{hereditary} if $\lambda \in \Lambda$ and
$r(\lambda) \in H$ imply $s(\lambda) \in H$.

\begin{prp}\label{prp-build-graph-trace}
Let $\Lambda$ be a row-finite $k$-graph with no sources. Let $v\in\Lambda^0$, and let
$H := s(v\Lambda) \subseteq \Lambda^0$. Then $H$ is hereditary. If
\[
    \Big(\sum_{i=1}^k\image(1-A^t_i) \Big) \cap \NN{\Lambda^0} = \{0\}
\]
then there is a graph trace $g$ on $H\Lambda$ such that $g(v)=1$. If $\Lambda$ is
cofinal, then $g$ is a faithful graph trace on $H\Lambda$, and there is a unique graph
trace $\overline{g}$ on $\Lambda$ such that $\overline{g}|_H = g$.
\end{prp}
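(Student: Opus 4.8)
The plan is to prove the three assertions of the proposition in turn, the middle one being where the work lies.

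First, $H=s(v\Lambda)$ is hereditary: if $r(\lambda)\in H$, choose $\mu\in v\Lambda$ with $s(\mu)=r(\lambda)$; then $\mu\lambda\in v\Lambda$, so $s(\lambda)=s(\mu\lambda)\in s(v\Lambda)=H$. Since a path with range in $H$ has all of its vertices in $H$, the coordinate matrices of $H\Lambda$ are the restrictions of $A_1,\dots,A_k$ to $H\times H$; in particular a function $g\colon H\to[0,\infty)$ is a graph trace on $H\Lambda$ precisely when $g(w)=\sum_{u}A_i(w,u)g(u)$ for all $w\in H$ and all $i\le k$ (that the single-colour identities suffice follows from $A_m=A_{e_1}^{m_1}\cdots A_{e_k}^{m_k}$ together with commutativity and positivity of the $A_{e_i}$). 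Note also that any such $g$ with $g(v)=1$ automatically satisfies $g(w)\le 1$ for every $w\in H$: picking $\mu\in v\Lambda w$, the value $g(w)=g(s(\mu))$ is one of the nonnegative summands of $g(v)=\sum_{\lambda\in v\Lambda^{d(\mu)}}g(s(\lambda))$.

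To construct $g$ I would run a finite-intersection argument inside the compact space $[0,1]^{H}$. For a finite $F\subseteq H$ with $v\in F$, let $C_F$ be the (closed) set of $g\in[0,1]^{H}$ with $g(v)=1$ and $g(w)=\sum_u A_i(w,u)g(u)$ for all $w\in F$ and $i\le k$; then $C_F\cap C_{F'}=C_{F\cup F'}$, and any element of $\bigcap_F C_F$ is a graph trace on $H\Lambda$ with $g(v)=1$, so by compactness it suffices to show every $C_F$ is nonempty. Choosing the finite data $F_2$ (a large, downward-closed-from-$v$ neighbourhood, finite by row-finiteness) and the finite set of imposed single-colour identities appropriately — so that, from those identities, any nonnegative solution $g'$ on $F_2$ satisfies $g'(v)=\sum_{\lambda\in v\Lambda^p}g'(s(\lambda))\ge g'(u)$ for all $u$ relevant to $C_F$, hence $g'\le g'(v)=1$ there — one reduces $C_F\neq\emptyset$ (after extending such a $g'$ by zero) to solvability of the finite linear feasibility problem: find $g'\in\RR_{\ge0}^{F_2}$ with $g'(v)=1$ and the prescribed identities. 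If this has no solution, the convex cone $\mathcal C=\{(g'(v),\,(g'(w)-\sum_u A_i(w,u)g'(u))_{w,i}):g'\in\RR_{\ge0}^{F_2}\}$ is finitely generated, hence closed, and misses the point $q=(1,0,\dots,0)$; applying Theorem~\ref{SHT} to $q$ and the open convex set $\mathcal C+B(0,\varepsilon)$ (with $\varepsilon<d(q,\mathcal C)$) yields a nonzero functional, given by a real scalar $s$ and reals $(\eta_{w,i})$, with $s=\xi\cdot q<0$ and $s\,g'(v)+\sum_{w,i}\eta_{w,i}\bigl(g'(w)-\sum_u A_i(w,u)g'(u)\bigr)\ge 0$ for all $g'\ge0$ (as $\mathcal C$ is a cone containing $0$, this forces $s<0$ and $\xi\ge0$ on $\mathcal C$). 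Reading off the coefficient of each $g'(u)$ and writing $\eta_i:=\sum_w\eta_{w,i}\delta_w$, this coefficient equals $\bigl(\sum_{i}(1-A_i^t)\eta_i+s\,\delta_v\bigr)(u)$, which is therefore $\ge0$ at every $u$; hence $\sum_i(1-A_i^t)\eta_i=\bigl(\sum_i(1-A_i^t)\eta_i+s\,\delta_v\bigr)+(-s)\delta_v$ is a finitely supported nonnegative vector, and it is nonzero since $-s>0$ makes it strictly positive at $v$. Scaling to rational and then integer coordinates (the system has integer coefficients), we obtain a nonzero element of $\bigl(\sum_i\image(1-A_i^t)\bigr)\cap\NN{\Lambda^0}$, contradicting the hypothesis. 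So every $C_F$ is nonempty and $g$ exists.

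For the last assertion, assume $\Lambda$ is cofinal and use the reformulation of cofinality recalled above (\cite{LewinSims:MPCPS10}). Given $w\in H$, apply it to the pair $(w,v)$ to obtain $n\in\NN^k$ with $w\Lambda s(\lambda)\neq\emptyset$ for all $\lambda\in v\Lambda^{n}$; since $1=g(v)=\sum_{\lambda\in v\Lambda^{n}}g(s(\lambda))$, some $\lambda_0\in v\Lambda^{n}$ has $g(s(\lambda_0))>0$, and choosing $\nu\in w\Lambda s(\lambda_0)$ gives $g(w)=\sum_{\nu'\in w\Lambda^{d(\nu)}}g(s(\nu'))\ge g(s(\lambda_0))>0$, so $g$ is faithful. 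For the extension, apply cofinality to the pair $(v,u)$ for each $u\in\Lambda^0$ to obtain $n\in\NN^k$ with $s(\lambda)\in H$ for all $\lambda\in u\Lambda^{n}$; call such $n$ \emph{good for $u$}. If $n$ is good for $u$ and $p\ge n$, then $p$ is good for $u$ (factor off the degree-$n$ segment and use that $H$ is hereditary), and the factorisation property together with the graph-trace identity for $g$ on $H\Lambda$ gives $\sum_{\lambda\in u\Lambda^{n}}g(s(\lambda))=\sum_{\lambda\in u\Lambda^{p}}g(s(\lambda))$; comparing two good indices via their join shows $\overline g(u):=\sum_{\lambda\in u\Lambda^{n}}g(s(\lambda))$ is well defined. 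Taking $n=0$ gives $\overline g|_H=g$, and one more application of factorisation (with $p=m\vee n$, $n$ good for $u$, factoring $u\Lambda^{p}$ through $u\Lambda^{m}$) shows $\overline g$ satisfies the graph-trace identity at every $u$ and $m$; uniqueness is immediate, since any graph trace extending $g$ must agree with this formula at each $u$.

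I expect the existence step to be the main obstacle. The two delicate points there are (i) organising the finite data so that the finite-intersection/compactness machinery can be run inside $[0,1]^{\Lambda^0}$, i.e. so that the finite approximants are automatically bounded by $g(v)=1$, and (ii) setting up the cone and applying the separating-hyperplane theorem so that the resulting element of $\sum_i\image(1-A_i^t)$ is honestly nonnegative on all of $\Lambda^0$ (not merely on the finite support set) and honestly nonzero. The faithfulness and extension arguments are, by comparison, routine bookkeeping with the factorisation property and the cofinality reformulation of \cite{LewinSims:MPCPS10}.
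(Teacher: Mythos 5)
Your proof is correct, and its overall architecture coincides with the paper's: hereditariness is immediate, the existence of $g$ comes from a finite-intersection/compactness argument in the product space $[0,1]^{\Lambda^0}$ whose finite approximants are produced via Theorem~\ref{SHT} and the matrix hypothesis, and the faithfulness and extension arguments are the same as the paper's. The genuine difference is that you run the separating-hyperplane step on the dual side of the underlying linear-programming problem. The paper separates the subspace $X_n = \lsp_{\RR}\{(1-A_i^t)\delta_w\}$ from the open orthant $(0,\infty)^{V_n}$ (disjointness being exactly the matrix hypothesis after a rational perturbation into the open orthant), and the separating functional $\xi^n$ \emph{is} the finite approximant, once the induction~\eqref{eqn:ckrel} shows $\xi^n_u \le \xi^n_v$. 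You instead separate the target point $(1,0,\dots,0)$ from the finitely generated (hence closed) cone of achievable constraint values and read the separating functional as a Farkas certificate that directly manufactures a nonzero element of $\big(\sum_i\image(1-A_i^t)\big)\cap\NN{\Lambda^0}$. What each buys: the paper's primal version makes the passage from $\RR$ to $\ZZ$ trivial (perturb into an open set, then clear denominators), whereas your dual version leans on the standard but unproved fact that a feasible finite system of linear inequalities with integer coefficients has a rational solution; in exchange, your version exhibits the contradiction with the hypothesis more transparently. Two steps you compress carry real content in the paper and should be spelled out: the derivation of $g'(u)\le g'(v)=1$ on the finite window from the imposed single-colour identities is precisely the induction~\eqref{eqn:ckrel}, which uses the factorisation property via the bijection $(\lambda,\alpha)\mapsto\lambda\alpha$; and the window $F_2$ must be chosen to contain $F\cup\bigcup_i s(F\Lambda^{e_i})$ so that your certificate is supported, and hence nonnegative, on all of $\Lambda^0$ --- a point you correctly flag and which does go through.
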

\begin{proof}
The set $H$ is clearly hereditary. Fix $n \in \NN^k$, set $V_n:= \bigcup_{m \leq n}
s(v\Lambda^m)$, and define
\begin{align*}
Y_n := \Big\{g : \Lambda^0 \to [0,1] \mid g(v) = 1 \text{ and }& g(w)=\sum_{\alpha \in w\Lambda^{e_i}}g(s(\alpha))\\
&\text{ whenever } n_i \geq 1 \text{ and } w \in V_{n-e_i}\Big\}.
\end{align*}
We claim that $Y_n \not= \emptyset$.

To see this, let
\[
X_n :=  \lsp_{\RR}\{(1-A_i^t)\delta_{w_i} : 0\leq i \leq k, n_i\geq1\text{ and } w_i \in V_{n-e_i}\}.
\]
Then each $X_n$ is a finite-dimensional subspace, and hence a closed affine subset, of
$\RR^{V_n}$.

We show that $X_n \cap (0,\infty)^{V_n} = \emptyset$. For this, we suppose that $u \in
X_n \cap (0,\infty)^{V_n}$, and derive a contradiction. Fix $x_i\in \RR^{V_{n-e_i}}$ such
that $u=\sum_{i=1}^k (1-A_i^t)x_i$. Since $V_n$ is finite, the set $(0,\infty)^{V_n}$ is
open. Since $(x_1, \dots, x_k) \mapsto \sum^k_{i=1} (1 - A_i^t) x_i$ is continuous, we
deduce that there are $y_1, \dots, y_k \in \QQ^{V_n}$ sufficiently close to the $x_i$ to
ensure that $\sum^k_{i=1} (1 - A_i^t)y_i \in (0,\infty)^{V_n}$. Fix $N \in \NN$ such that
each $N y_i \in \ZZ^{V_n}$. Since the $A_i$ are integer matrices, we obtain
\[
\sum_{i=1}^k (1-A_i^t)Ny_i
    = N\sum_{i=1}^k (1-A_i^t)y_i
    \in \Big(\sum_{i=1}^k\image(1-A^t_i) \Big) \cap \NN{\Lambda^0},
\]
contradicting our hypothesis. So $X_n \cap (0,\infty)^{V_n} = \emptyset$.

Applying Theorem~\ref{SHT} to the affine set $X_n$ and the open convex set $(0,
\infty)^{V_n}$, we obtain a nonzero vector $\xi^n \in \RR^{V_n}$ such that
\[
    \xi^n \cdot x = t \text{ and } t < \xi^n \cdot p\quad\text{ for all $x \in X_n$ and $p \in (0,\infty)^{V_n}$.}
\]
Since $X_n$ is a subspace, we have $0 \in X_n$, forcing $t = \xi^n \cdot 0 = 0$. We then
have $0 < \xi^n \cdot p$ for every $p \in (0,\infty)^{V_n}$; applying this to $p =
\delta_w$ for each $w \in V_n$ shows that $\xi^n_w > 0$ for every $w \in V_n$.

We show that
\begin{equation}\label{eqn:ckrel}
    \xi^n_v = \sum_{\lambda \in v\Lambda^m} \xi^n_{s(\lambda)} \text{ for } m \leq n.
\end{equation}
To establish~\eqref{eqn:ckrel}, we argue by induction on $|m|:=m_1+m_2+\cdots+m_k$. If
$|m|=0$ then $m=0$ and~\eqref{eqn:ckrel} holds trivially, giving a base case. Now suppose
that~\eqref{eqn:ckrel} holds for all $m\leq n$ with $|m|= N$. Suppose that $m'\in\NN^k$
satisfies $m'\leq n$ and $|m'|=N+1$. Then $m'=m+e_i$ for some $1\leq i\leq k$ and $m\leq
n$ with $|m|=N$. By the inductive hypothesis,
\[
\xi^n_v = \sum_{\lambda \in v\Lambda^{m}}\xi^n(s(\lambda)).
\]
For $\lambda \in v\Lambda^{m}$, we have $(1-A_i^t)\delta_{s(\lambda)} \in X_n$. Since
$\xi^n\in X_n^\perp$, we obtain
\begin{equation}\label{use-perp}
0 = \xi^n \cdot((1-A_i^t)\delta_{s(\lambda)})
    = \xi^n_{s(\lambda)} - \sum_{\alpha \in s(\lambda)\Lambda^{e_i}} \xi^n_{s(\alpha)}.
\end{equation}
Now
\[
\xi^n(v)
    = \sum_{\lambda \in v\Lambda^{m}} \sum_{\alpha \in s(\lambda)\Lambda^{e_i}}\xi^n_{s(\alpha)}
    = \sum_{\mu \in v\Lambda^{m'}}\xi^n_{s(\mu)}
\]
because $(\lambda,\alpha) \mapsto \lambda\alpha$ is a bijection between the terms
appearing in the sums.  This completes the inductive step, and
proves~\eqref{eqn:ckrel}.

Take $w\in V_n$, choose $\lambda' \in v\Lambda w$ with $d(\lambda') \le n$, and put
$m := d(\lambda')$.  We have
\[
\xi^n_v
    = \sum_{\lambda \in v\Lambda^m} \xi^n_{s(\lambda)}
    \geq \xi^n_w.
\]
So $\xi^n \in [0, \xi^n_v]^{V_n}$. Since $\xi^n\neq 0$, we deduce that $\xi^n(v)
> 0$. By rescaling, we may therefore assume that $\xi^n_v = 1$, and then $\xi^n_w \in
[0,1]$ for all $w \in V_n$.

Now take any $g \in [0,1]^{\Lambda^0}$ such that $g(v) = \xi^n_v$ for all $v \in V_n$
(for example, put $g(v) = \xi^n_v$ for $v \in V_n$ and $g(v) = 0$ for $v \not\in V_n$).
We show that $g \in Y_n$. For this, fix $i$ such that $n_i > 0$ and fix $w \in
V_{n-e_i}$. Choose $m\leq n-e_i$ such that $v\Lambda^m w\neq\emptyset$. Then
$(1-A_i^t)\delta_{w} \in X_n$, and the calculation~\eqref{use-perp} with $s(\lambda)$
replaced by $w$ shows that $g(w) = \xi^n_w = \sum_{\alpha \in w\Lambda^{e_i}}
\xi^n(s(\alpha)) = \sum_{\alpha \in w\Lambda^{e_i}} g(s(\alpha))$ as needed. So $g \in
Y_n$, giving $Y_n \not= \emptyset$ as claimed.

Hence the sets $Y_{(1, \dots, 1)}, Y_{(2, \dots, 2)}, \dots$ are nonempty closed subsets
of the compact space $[0,1]^{\Lambda^0}$, and since $n \le m \in \NN^k$ implies $V_n
\subseteq V_m$, we have $Y_{(1, \dots, 1)}\supseteq Y_{(2, \dots, 2)} \supseteq \dots$.
By the finite intersection property, $\bigcap^\infty_{j=1} Y_{(j, \dots, j)}$ is
nonempty; say
\[
    g\in \bigcap_{j\in\NN} Y_{(j,\dots, j)}.
\]
Then $g$ is a graph-trace on $H\Lambda$ such that $g(v) = 1$ by definition of the
$Y_n$.

Suppose that $\Lambda$ is cofinal. Then $H\Lambda$ is also cofinal. Fix $w\in H$.
Using the characterisation of cofinality from
\cite[Proposition~A.2]{LewinSims:MPCPS10}, there exists $n \in \NN^k$ such that
$w(H\Lambda)s(\lambda) \neq \emptyset$ for all $\lambda \in v(H\Lambda)^n$.  Now
\[
    1=g(v) = \sum_{\lambda \in v(H\Lambda)^n}g(s(\lambda))
\]
implies there exists $\lambda' \in v\Lambda^n$ such that $g(s(\lambda'))>0$. Since
$(wH\Lambda)s(\lambda) \not= \emptyset$, we can choose $\xi \in w(H\Lambda)s(\lambda)$.
Then
\[
    g(w) = \sum_{\mu \in w(H\Lambda)^{d(\xi)}}g(s(\mu)) > g(s(\xi))>0.
\]
Thus $g$ is a faithful graph trace on $H\Lambda$.

We must show that $g$ extends uniquely to a faithful graph trace on $\Lambda$. For $w \in
H$, set $n_w = 0 \in \NN^k$. For each $w \in \Lambda^0 \setminus H$, use the
characterisation \cite[Proposition~A.2]{LewinSims:MPCPS10} of cofinality to choose $n_w
\in \NN^k$ such that $v \Lambda s(\lambda) \neq \emptyset$ for all $\lambda \in w
\Lambda^n$. Then $s(\lambda) \in H$ for every $w \in \Lambda^0$ and $\lambda \in
w\Lambda^{n_w}$. Define $\overline{g} : \Lambda^0 \to \RR$ by $\overline{g}(w) :=
\sum_{\lambda \in w\Lambda^{n_w}} g(s(\lambda))$ for every $w \in \Lambda^0$. By
definition of $n_w$, we have $\overline{g}(w) = g(w)$ for $w \in H$, so $\overline{g}$
agrees with $g$ on $H$. Since $g$ is a faithful graph trace, we have $\overline{g}(w)
> 0$ for all $w$. If $\tilde{g}$ is any graph trace on $\Lambda$ that extends $g$, then the
graph-trace condition forces
\[
\tilde{g}(w)
    = \sum_{\lambda \in w\Lambda^{n_w}} \tilde{g}(s(\lambda))
    = \sum_{\lambda \in w\Lambda^{n_w}} g(s(\lambda))
    = \overline{g}(w)
\]
for every $w \in \Lambda^0$, so if $\overline{g}$ is a graph trace, then it is the unique
graph trace extending $g$ as claimed. So we just need to prove that $\overline{g}$ is a
graph trace. Fix $w \in \Lambda^0$ and $n \in \NN^k$. Let
\[
N := n_w \vee \Big(\bigvee_{\lambda \in w\Lambda^n} (n + n_{s(\lambda)})\Big).
\]
Since $g$ is a graph trace, we have
\[
\overline{g}(w)
    = \sum_{\mu \in w\Lambda^{n_w}} g(s(\mu))
    = \sum_{\mu \in w\Lambda^{n_w}} \Big(\sum_{\tau \in s(\mu)\Lambda^{N - n_w}} g(s(\tau))\Big)
    = \sum_{\eta \in w\Lambda^N} g(s(\eta)).
\]
Similarly,
\begin{align*}
\sum_{\lambda \in w\Lambda^n} \overline{g}(s(\lambda))
    &= \sum_{\lambda \in w\Lambda^n}  \Big(\sum_{\rho \in s(\lambda)\Lambda^{n_{s(\lambda)}}} g(s(\rho))\Big)\\
    &= \sum_{\lambda \in w\Lambda^n}  \Big(\sum_{\rho \in s(\lambda)\Lambda^{n_{s(\lambda)}}} \Big(\sum_{\zeta \in s(\rho)\Lambda^{N - (n+n_{s(\lambda)})}} g(s(\eta))\Big)\Big)\\
    &= \sum_{\eta \in w\Lambda^N} g(s(\eta)).
\end{align*}
So $\overline{g}(w) = \sum_{\lambda \in w\Lambda^n} \overline{g}(s(\lambda))$, completing
the proof.
\end{proof}

To make use of the preceding result, we first need to understand the relationship between
the existence of a faithful graph trace and quasidiagonality of the associated
$C^*$-algebra. For this, first recall \cite[Definitions II.6.7.1~and~II.6.8.1]{Blackadar}
that a semifinite trace $\tau$ on a $C^*$-algebra $A$ is a map from the set $A^+$ of
positive elements of $A$ to $[0,\infty]$ such that $\tau^{-1}([0,\infty))$ is dense in
$A^+$ and such that for all $a,b \in A^+$ and all $\lambda > 0$, we have $\tau(a+b) =
\tau(a) + \tau(b)$, $\tau(a^*a) = \tau(aa^*)$ and $\tau(\lambda a) = \lambda\tau(a)$
(with the convention $0\cdot\infty = 0$). The semifinite trace $\tau$ is \emph{faithful}
if $\tau(a^*a) > 0$ for all $a \in A\setminus\{0\}$.

\begin{lem}\label{lem:trace->QD}
Let $\Lambda$ be a cofinal row-finite $k$-graph with no sources. Suppose that there is a
faithful graph trace on $\Lambda$. Then $C^*(\Lambda)$ is quasidiagonal and carries a
faithful semifinite trace.
\end{lem}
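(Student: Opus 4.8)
The plan is to produce a faithful semifinite trace on $C^*(\Lambda)$ from the faithful graph trace, invoke the Tikuisis--White--Winter theorem to upgrade the associated trace to a quasidiagonal one, and then deduce quasidiagonality of the whole algebra. First I would recall from \cite[Theorem~4.2]{PRS} (or the relevant result of \cite{PRS}) that a faithful graph trace $g$ on $\Lambda$ gives rise to a faithful, semifinite, lower-semicontinuous, gauge-invariant trace $\tau_g$ on $C^*(\Lambda)$, determined on the spanning elements by $\tau_g(s_\mu s_\nu^*) = \delta_{\mu,\nu} g(s(\mu))$. In particular $\tau_g(p_v) = g(v) \in (0,\infty)$ for every vertex $v$, so each vertex projection $p_v$ lies in the Pedersen ideal, and the hereditary subalgebra $p_v C^*(\Lambda) p_v$ is unital with $\tau_g$ restricting to a genuine (bounded, faithful) tracial state on it after normalisation.

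Next I would fix a vertex $v$ and set $B := p_v C^*(\Lambda) p_v$, a unital $C^*$-algebra carrying the faithful tracial state $\tau := g(v)^{-1}\tau_g|_B$. The key point is that $B$ is nuclear and satisfies the UCT: $k$-graph $C^*$-algebras are nuclear \cite{KP} and, being stably isomorphic to a crossed product of an AF algebra by $\ZZ^k$ (Lemma~\ref{lem:Halpha and matrix}), satisfy the UCT, and both properties pass to the corner $B$. Therefore the Tikuisis--White--Winter theorem \cite{TWW} applies: every faithful trace on a separable, nuclear, UCT $C^*$-algebra is quasidiagonal, so $\tau$ is a quasidiagonal trace on $B$. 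By the GNS/quasidiagonal-trace machinery this yields a faithful representation of $B$ as quasidiagonal operators, whence $B$ is a quasidiagonal $C^*$-algebra.

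Finally I would transfer quasidiagonality from the corner $B = p_v C^*(\Lambda) p_v$ back to $C^*(\Lambda)$. Because $\Lambda$ is cofinal, every vertex projection in $C^*(\Lambda)$ is full, so $p_v$ is a full projection and $C^*(\Lambda)$ is Morita equivalent --- hence (having a countable approximate identity of projections) stably isomorphic --- to $B$. By Lemma~\ref{lem:basics}, quasidiagonality passes between stably isomorphic $C^*$-algebras, so $C^*(\Lambda)$ is quasidiagonal; and $\tau_g$ is the required faithful semifinite trace. The main obstacle I anticipate is bookkeeping rather than conceptual: one must be careful that the graph trace really does produce a \emph{faithful} semifinite (not merely densely-defined) trace with $\tau_g(p_v)$ finite and nonzero for all $v$, so that the corner argument and the hypotheses of \cite{TWW} --- separability, nuclearity, UCT, faithfulness of the trace --- are all genuinely in force; verifying the UCT for the corner via the crossed-product picture of Lemma~\ref{lem:Halpha and matrix} is the step most likely to need a sentence of justification.
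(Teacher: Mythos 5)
Your proposal is correct and follows essentially the same route as the paper's proof: obtain a faithful semifinite trace from the graph trace via \cite{PRS}, restrict to the unital corner $p_vC^*(\Lambda)p_v$, apply \cite[Corollary~B]{TWW} using nuclearity and the UCT, and transfer quasidiagonality back through fullness of $p_v$ (cofinality) and Lemma~\ref{lem:basics}. The only cosmetic differences are the citation details (the paper uses \cite[Proposition~3.8]{PRS} for the trace and \cite[Theorem~5.5]{KP} directly for both nuclearity and the UCT, rather than routing the UCT through the crossed-product picture).
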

\begin{proof}
Let $g$ be a faithful graph trace on $\Lambda$. Proposition~3.8 of \cite{PRS} implies
that there is a faithful semifinite trace $\tau$ on $C^*(\Lambda)$ such that $\tau(p_v) =
g(v)$ for all $v$. Fix $v \in \Lambda^0$. Rescale $\tau$ so that $\tau(p_v) = 1$. Then
$\tau$ restricts to a faithful trace on $p_v C^*(\Lambda) p_v$. Since $\Lambda$ is
cofinal, \cite[Proposition~3.5]{RobSi} shows that $p_v$ is full. Theorem~5.5 of \cite{KP}
shows that $C^*(\Lambda)$ is nuclear and belongs to the UCT class, so the full corner
$p_v C^*(\Lambda) p_v$ has the same properties. Since $p_v C^*(\Lambda) p_v$ is unital,
nuclear, belongs to the UCT class and has a faithful trace, \cite[Corollary~B]{TWW} shows
that $p_v C^*(\Lambda) p_v$ is quasidiagonal. Thus $C^*(\Lambda)$ is Morita equivalent,
and hence stably isomorphic \cite{BGR}, to a quasidiagonal $C^*$-algebra. By
Lemma~\ref{lem:basics}, $C^*(\Lambda)$ is quasidiagonal.
\end{proof}

We are now ready to prove our key result characterising quasidiagonality of $k$-graph
$C^*$-algebras associated to cofinal $k$-graphs.

\begin{thm}\label{thm-for-main}
Let $\Lambda$ be a row-finite  $k$-graph with no sources.
\begin{enumerate}
\item\label{prp-for-main2} If $C^*(\Lambda)$ is stably finite, then
    $\Big(\sum_{i=1}^k\image(1-A^t_i) \Big) \cap \NN{\Lambda^0} = \{0\}$.
\item\label{prp-for-main1} If $\Lambda$ is cofinal and
    $\Big(\sum_{i=1}^k\image(1-A^t_i) \Big) \cap \NN{\Lambda^0} = \{0\}$, then
    $\Lambda$ admits a faithful graph trace, and $C^*(\Lambda)$ is quasidiagonal.
\end{enumerate}
\end{thm}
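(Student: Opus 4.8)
The plan is to assemble Theorem~\ref{thm-for-main} from the pieces already developed in this section, together with Proposition~\ref{prp:superNate} and Lemma~\ref{lem:Halpha and matrix}.

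For part~(\ref{prp-for-main2}), I would start by setting $B := C^*(\Lambda \times_d \ZZ^k)$ and recalling from Lemma~\ref{lem:Halpha and matrix} that $B$ is AF and that $C^*(\Lambda)$ is stably isomorphic to $B \times_\alpha \ZZ^k$ for the action $\alpha$ described there. Since $C^*(\Lambda)$ is stably finite and $C^*(\Lambda)$ and $B \times_\alpha \ZZ^k$ are stably isomorphic with approximate units of projections, $B \times_\alpha \ZZ^k$ is stably finite. Because $B$ is AF, $K_0(B)$ has cancellation, so the implication (\ref{it:SF})${}\implies{}$(\ref{it:K-th}) of Proposition~\ref{prp:superNate} applies and gives $H_\alpha \cap K_0(B)^+ = \{0\}$. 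By the final statement of Lemma~\ref{lem:Halpha and matrix}, this is equivalent to $\big(\sum_{i=1}^k \image(1 - A_i^t)\big) \cap \NN\Lambda^0 = \{0\}$, which is exactly the claimed conclusion.

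For part~(\ref{prp-for-main1}), suppose $\Lambda$ is cofinal and $\big(\sum_{i=1}^k \image(1 - A_i^t)\big) \cap \NN\Lambda^0 = \{0\}$. Fix any $v \in \Lambda^0$ and let $H := s(v\Lambda)$. Proposition~\ref{prp-build-graph-trace} then yields a graph trace $g$ on $H\Lambda$ with $g(v) = 1$, and since $\Lambda$ is cofinal it further provides a faithful graph trace $\overline{g}$ on $\Lambda$ extending $g$. In particular $\Lambda$ admits a faithful graph trace. Finally, Lemma~\ref{lem:trace->QD} applies to the cofinal, row-finite, source-free $k$-graph $\Lambda$ carrying a faithful graph trace and concludes that $C^*(\Lambda)$ is quasidiagonal.

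Neither part presents a genuine obstacle, since the substantive work has been done in Lemma~\ref{lem:Halpha and matrix}, Proposition~\ref{prp-build-graph-trace} and Lemma~\ref{lem:trace->QD}; the only point requiring a little care is the transfer of stable finiteness from $C^*(\Lambda)$ to $B \times_\alpha \ZZ^k$, which uses that stable isomorphism of $C^*$-algebras with approximate units of projections preserves stable finiteness (the remark preceding Lemma~\ref{lem:basics}), and the verification that $K_0(B)$ is cancellative, which is automatic because $B$ is AF.
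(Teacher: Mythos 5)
Your proposal is correct and follows essentially the same route as the paper: part~(\ref{prp-for-main2}) is the chain Lemma~\ref{lem:Halpha and matrix} (stable isomorphism with $C^*(\Lambda\times_d\ZZ^k)\times_\alpha\ZZ^k$), Proposition~\ref{prp:superNate} (stable finiteness implies $H_\alpha\cap K_0^+=\{0\}$, using cancellation of $K_0$ of an AF algebra), and the final equivalence in Lemma~\ref{lem:Halpha and matrix}; part~(\ref{prp-for-main1}) is Proposition~\ref{prp-build-graph-trace} followed by Lemma~\ref{lem:trace->QD}. You are slightly more explicit than the paper about the two hygiene points (transfer of stable finiteness along a stable isomorphism via approximate units of projections, and cancellation of $K_0(B)$), which is to your credit.
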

\begin{proof}
\eqref{prp-for-main2} Resume the notation of Lemma~\ref{lem:Halpha and matrix}, and let
\[
\rho : \varinjlim_{n \in \ZZ^k}(\ZZ \Lambda^0, A_{m,n}) \to K_0(C^*(\Lambda \times_d \ZZ^k))
\]
be the isomorphism described there. Since $C^*(\Lambda)$ is stably finite
Lemma~\ref{lem:Halpha and matrix} then shows that $C^*(\Lambda \times_d \ZZ^k)
\times_\alpha \ZZ^k$ is also stably finite. Since $\rho$ is an isomorphism,
Proposition~\ref{prp:superNate} implies that
\[
\rho(H_\alpha) \cap \rho(K_0(C^*(\Lambda \times_d \ZZ^k)^+) = \{0\},
\]
and then the final statement of Lemma~\ref{lem:Halpha and matrix} gives
$\Big(\sum_{i=1}^k\image(1-A^t_i)\Big) \cap \NN{\Lambda^0} = \{0\}$.

\eqref{prp-for-main1} Suppose that $\Lambda$ is cofinal and that
$\Big(\sum_{i=1}^k\image(1-A^t_i)\Big) \cap \NN{\Lambda^0} = \{0\}$.
Proposition~\ref{prp-build-graph-trace} shows that $\Lambda$ carries a faithful graph
trace, and then Lemma~\ref{lem:trace->QD} shows that $C^*(\Lambda)$ is quasidiagonal.
\end{proof}

We can now prove the first part of our main result.

\begin{proof}[Proof of Theorem~\ref{thm-main}(\ref{thm-main-item1})]
Proposition~7.1.15 of \cite{BO} shows that every quasidiagonal $C^*$-algebra is stably
finite, giving (\ref{it:k-QD})${\implies}$(\ref{it:k-SF}).
Theorem~\ref{thm-for-main}(\ref{prp-for-main2}) says that
(\ref{it:k-SF})${\implies}$(\ref{it:k-MX}). Since $\Lambda$ is cofinal,
Theorem~\ref{thm-for-main}(\ref{prp-for-main1}) shows, in particular, that
(\ref{it:k-MX})${\implies}$(\ref{it:k-GT}). Finally,
(\ref{it:k-GT})${\implies}$(\ref{it:k-QD}) follows from Lemma~\ref{lem:trace->QD}.
\end{proof}

\section{Results about 1-graphs}

To apply Brown's theorem \cite[Theorem~0.2]{Brown:JFA98} in the context of $2$-graphs in
the next section, we will need an explicit description of the positive cone
$K_0(C^*(E))^+$ in the $K_0$-group of a graph $C^*$-algebra $C^*(E)$ when $E$ has no
cycles. This is folklore, but we could not locate precisely the statement that we need in
the literature.

Let $E$ be a row-finite graph with no sources, and let $A$ denote its vertex matrix. The
$K$-theory of $C^*(E)$ is calculated by applying the Pimsner--Voiculescu sequence to the
gauge action $\gamma$ of $\TT$ on $C^*(E)$ (see \cite[Chapter~7]{CBMS}). Identifying the
graph $C^*$-algebra $C^*(E)$ with the $C^*$-algebra of the associated $1$-graph, and
applying Lemma~\ref{lem:Halpha and matrix} with $k = 1$, we see that there is an
automorphism $\beta$ of the AF algebra $C^*(E \times_d \ZZ)$ such that $C^*(E)$ is stably
isomorphic to $C^*(E \times_d \ZZ) \times_\beta \ZZ$. Moreover, there is an isomorphism
$\rho : K_0(C^*(E \times_d \ZZ)) \to \varinjlim(\ZZ E^0, A^t)$ which carries $K_0(\beta)$
to the automorphism $\widetilde{A}^t$ induced by $A^t : \ZZ E^0 \to \ZZ E^0$. Since
$C^*(E\times_d\ZZ)$ is an AF algebra, the Pimsner--Voiculescu sequence collapses, giving
$K_0(C^*(E)) \cong \coker(\id-K_0(\beta))$. Hence there is an isomorphism $\phi :
K_0(C^*(E)) \to \coker (1-A^t)$ taking $[p_v]$ to $\delta_v + \image(1 - A^t)$
\cite[Theorem~7.16]{CBMS}.

\begin{lem}\label{lem:positive cone}
Let $E$ be a row-finite directed graph with no sources and no cycles. Let $A$ denote the
vertex matrix of $E$. The isomorphism $\phi : K_0(C^*(E)) \to\coker (1 - A^t)$ carries
$K_0(C^*(E))^+$ onto $\{a + \image(1 - A^t) : a \in \NN E^0\} \subseteq \coker(1 - A^t)$.
\end{lem}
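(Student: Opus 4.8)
The plan is to identify $K_0(C^*(E))^+$ directly with the image of the positive cone under the maps coming from the AF structure, and then transport this through the isomorphism $\phi$. Recall that $C^*(E)$ is stably isomorphic to $C^*(E\times_d\ZZ)\times_\beta\ZZ$, and $K_0(C^*(E))\cong\coker(\id-K_0(\beta))$ with the quotient map $q\colon K_0(C^*(E\times_d\ZZ))\to K_0(C^*(E))$ realising this cokernel. Since stable isomorphism preserves the ordered $K_0$-group, and since for an AF algebra $B$ we have $K_0(B)^+=q'(K_0(B\otimes\Kk)^+)$ for the obvious identifications, the key point is that $K_0(C^*(E))^+ = q\big(K_0(C^*(E\times_d\ZZ))^+\big)$. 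This containment "$\supseteq$" is automatic because $q$ is positive; the reverse containment is where the hypothesis that $E$ has \emph{no cycles} enters, via the collapse of the Pimsner--Voiculescu sequence and the fact that $K_1(C^*(E))=0$, so that every class in $K_0(C^*(E))^+$ is represented by an honest projection in a matrix algebra over $C^*(E)$, which by the AF structure of $C^*(E\times_d\ZZ)$ and the standard description of how projections in the crossed product relate to those in the coefficient algebra lifts to a projection over $C^*(E\times_d\ZZ)$.

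Concretely, first I would recall from the discussion preceding the lemma that $\rho\colon K_0(C^*(E\times_d\ZZ))\to\varinjlim(\ZZ E^0,A^t)$ is an \emph{order} isomorphism (this is part of Lemma~\ref{lem:Halpha and matrix} with $k=1$), carrying the positive cone onto $\bigcup_{n\ge 0}A^t_{n,\infty}(\NN E^0)$, and carrying $[p_{(v,0)}]$ to $A^t_{0,\infty}\delta_v$. Next I would observe that under the identification $K_0(C^*(E))\cong\coker(\id-\widetilde A^t)$ of the direct limit modulo the automorphism $\widetilde A^t$, the quotient map sends $A^t_{0,\infty}\delta_v$ to $\delta_v+\image(1-A^t)$, which is exactly the formula defining $\phi$. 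So it remains to chase: $\phi(K_0(C^*(E))^+)$ equals the image in $\coker(1-A^t)$ of $\bigcup_{n\ge0}A^t_{n,\infty}(\NN E^0)$. But for each $n$ and each $a\in\NN E^0$ we have $A^t_{n,\infty}(a)=A^t_{0,\infty}(A^t_n a)$, and $A^t_n a\in\NN E^0$ since $A$ is a nonnegative integer matrix; hence this union, pushed to the cokernel, is precisely $\{a+\image(1-A^t):a\in\NN E^0\}$.

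The remaining ingredient, and the main obstacle, is justifying $K_0(C^*(E))^+=q\big(K_0(C^*(E\times_d\ZZ))^+\big)$ — i.e.\ that the quotient map is surjective on positive cones, not merely positive. For a crossed product of an AF algebra by $\ZZ$ this is a standard consequence of the Pimsner--Voiculescu six-term sequence together with $K_1=0$: the sequence forces $K_0(C^*(E))\cong\coker(\id-K_0(\beta))$, and more importantly the connecting map into $K_0$ is an isomorphism onto this cokernel, so the inclusion $C^*(E\times_d\ZZ)\hookrightarrow C^*(E\times_d\ZZ)\times_\beta\ZZ$ induces the quotient map $q$; since an inclusion of a subalgebra is positive and, for an AF subalgebra whose inclusion induces a surjection on $K_0$, any positive class lifts to a positive class (projections in the ambient algebra can be pushed into a large enough finite-dimensional subalgebra and are then honestly equivalent to a projection in the AF core), we get surjectivity of $q$ on positive cones. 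I would cite \cite[Theorem~7.16]{CBMS} or the analogous computation for AF-core crossed products to dispatch this cleanly, since spelling out the lifting of projections is routine but tedious. With that in hand, applying $\phi = \bar\rho\circ(\text{quotient})$ to both sides and using the order-isomorphism property of $\rho$ finishes the proof.
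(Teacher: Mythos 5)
Your reduction of the problem is sound as far as it goes: the easy containment $\NN E^0 + \image(1-A^t) \subseteq \phi(K_0(C^*(E))^+)$ is immediate from $\phi([p_v]) = \delta_v + \image(1-A^t)$, and your computation that the image of $\bigcup_n A^t_{n,\infty}(\NN E^0)$ in $\coker(1-A^t)$ equals $\NN E^0 + \image(1-A^t)$ is correct. But the entire content of the lemma is the step you defer to the last paragraph, namely that the quotient map $q : K_0(C^*(E\times_d\ZZ)) \to K_0(C^*(E))$ is surjective on positive cones, and the justification you offer does not establish it. The Pimsner--Voiculescu sequence and the vanishing of $K_1$ give surjectivity of $q$ on groups only; they say nothing about the order. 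There is no ``standard description of how projections in the crossed product relate to those in the coefficient algebra'' that makes every projection over $B\times_\beta\ZZ$ equivalent to one over $B$ -- for non-AF coefficients this is simply false (the Rieffel projections in the irrational rotation algebra are not equivalent to anything over $C(\TT)$), and for AF coefficients it is precisely the nontrivial fact at issue, not something one can cite. (The paper remarks that the authors could not locate the needed statement in the literature; \cite[Theorem~7.16]{CBMS} computes $K_0(C^*(E))$ only as a group.) Your parenthetical sketch -- push a projection into a large finite-dimensional subalgebra, then it is ``honestly equivalent to a projection in the AF core'' -- conflates two different algebras: a finite-dimensional subalgebra of $C^*(E)\otimes\Kk$ need not sit inside, or have its projections equivalent into, the core $C^*(E\times_d\ZZ)$. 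Note also that the hypothesis that $E$ has no cycles is never actually used in your argument at the point where it must do the work.

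For comparison, the paper proves the hard containment by working inside $C^*(E)$ itself rather than via the core: it invokes the Raeburn--Szyma\'nski realisation $C^*(E) = \varinjlim C^*(E_{F_n})$ over finite subgraphs of the dual graph. The no-cycles hypothesis guarantees each $E_{F_n}$ is acyclic, hence $C^*(E_{F_n})$ is finite-dimensional with positive cone freely generated by the classes of the source-vertex projections $q_v$; one then computes $K_0(\iota_n)([q_v])$ explicitly as a nonnegative integer combination of classes $[p_{s(e)}]$, which lands in $\NN E^0 + \image(1-A^t)$ under $\phi$. If you want to salvage your approach, you would have to either carry out this kind of explicit finite-dimensional approximation of $C^*(E)$ (at which point you are reproducing the paper's argument), or supply an actual proof that positive classes lift through $q$; as written, the proposal assumes the lemma at its crux.
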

\begin{proof}
The isomorphism  $\phi : K_0(C^*(E)) \to \coker (1-A^t)$ takes  $[p_v]$ to $\delta_v +
\image(1 - A^t)$. Thus  $\NN{E^0} + \image(1 - A^t)\subseteq \phi( K_0(C^*(E))^+)$.

For the other containment, fix $a \in K_0(C^*(E))^+$. Let $\{F_n\}$ be an increasing
sequence of finite subsets  of $E^1$ such that $E^1=\cup_{n=1}^\infty F_n$. Raeburn and
Szyma\'nski show in \cite[Proof of Theorem~1.5]{RSz} that there is an increasing sequence
$\{E_{F_n}\}$  of finite subgraphs of the dual graph such that $C^*(E) = \varinjlim
C^*(E_{F_n})$.  Let $\iota_n : C^*(E_{F_n}) \to C^*(E)$ be the injection. Then there is
an $n$ such that  $a = K_0(\iota_n)(b)$ for some $b$; and  $b$ is positive in
$K_0(C^*(E_{F_n}))$ because $a$ is positive.

Since $E$ has no cycles, Lemma~1.3 of \cite{RSz} implies that $E_{F_n}$ has no cycles. By
definition of the $E_{F_n}$ in \cite[Definition~1.1]{RSz},
\[E_{F_n}^0 = F_n \cup \big(r(F_n) \cap s(F_n) \cap r(E^1 \setminus F_n)\big).\]
Inspection of the proof of \cite[Lemma~1.2]{RSz} shows that the injection $\iota_n$
carries the vertex projection $q_e$ in $C^*(E_{F_n})$ to $s_e s^*_e$ for $e \in G
\subseteq E_{F_n}^0$ and carries $q_w$ to $p_w - \sum_{e \in w G} s_e s^*_e$ for $w \in
r(F_n) \cap s(F_n) \cap r(E^1 \setminus F_n)$. Since $E$ is row-finite, we then have
\begin{equation*}%\label{eq-sum}
\iota_n(q_w) = \sum_{e \in w E^1
\setminus F_n} s_e s^*_e.
\end{equation*}

Since $E_{F_n}$ is a finite graph and has  no cycles, there is an isomorphism
\[\bigoplus_{v \in E_{F_n}^0, vE_{F_n}^1 = \emptyset} M_{E_{F_n}^* v}(\CC) \to C^*(E_{F_n})\] that carries
the matrix unit $\delta_v \theta_{v,v}$ in the direct summand corresponding to a source
$v$ in $E_{F_n}$ to the vertex projection $p_v \in C^*(E_{F_n})$ (see, for example,
\cite[Proposition~1.18]{CBMS}). Since $b$ is positive in $K_0(C^*(E_{F_n}))$ it follows
that it is the image of a positive element of
\[K_0\Big(\bigoplus_{v \in E_{F_n}^0, vE_{F_n}^1 = \emptyset} M_{E_{F_n}^* v}(\CC)\Big) \cong
\bigoplus_{v \in E_{F_n}^0, vE_{F_n}^1 = \emptyset} \ZZ,\]
 and so has the form $\sum_{v} b_v [q_v]$ for some nonnegative integers $b_v$. Now we have
\begin{align*}
a &= K_0(\iota_n)(b)
    = \sum_{v} b_v K_0(\iota_n)([q_v]) \\
    &= \sum_{e \in G} b_e [s_e s^*_e]
        + \sum_{w \in r(F_n) \cap s(F_n) \cap r(E^1 \setminus F_n)}
            b_w \Big[\sum_{e \in w E^1 \setminus F_n} s_e s^*_e\Big] \\
    &= \sum_{e \in F_n} b_e [p_{s(e)}]
        + \sum_{w \in r(F_n) \cap s(F_n) \cap r(E^1 \setminus F_n)}
          \sum_{e \in w E^1 \setminus F_n} b_w [p_{s(e)}] \\
    &= \phi^{-1}\Big(\sum_{e \in F_n} b_e \delta_{s(e)}
        + \sum_{w \in r(F_n) \cap s(F_n) \cap r(E^1 \setminus F_n)}
          \sum_{e \in w E^1 \setminus F_n} b_w \delta_{s(e)} +\image(1 - A^t) \Big),
\end{align*}
as required.
\end{proof}

We next reconcile Theorem~\ref{thm-for-main} with Brown's theorem
\cite[Theorem~0.2]{Brown:JFA98} and with recent results of Schafhauser
\cite{Schafhauser}. Schafhauser proves that a graph $C^*$-algebra $C^*(E)$ is
AF-embeddable if and only if $E$ contains no cycle with an entrance, and that otherwise
it is not stably finite. On the other hand, Brown's theorem combined with
Lemma~\ref{lem:Halpha and matrix} shows that $C^*(E)$ is AF-embeddable if and only if the
image of $1 - A^t$ contains no nontrivial positive elements. Combining the two, we deduce
that $\image(1 - A^t)$ contains no nontrivial positive elements if and only if $E$
contains no cycle with an entrance; but it should not be necessary to go through
$C^*$-algebras to prove this combinatorial result. Since we will need one of these
implications in the next section, we present a direct proof. We emphasise that most of
the implications in the next lemma are due to Schafhauser. Our contribution is the
combinatorial proof of (\ref{it:1graphcharCycles})${\iff}$(\ref{it:1graphcharMX}).

\begin{lem}\label{lem:no positives}
Let $E$ be a row-finite directed graph with no sources. The following are equivalent:
\begin{enumerate}
\item\label{it:1graphcharAFE} $C^*(E)$ is AF-embeddable.
\item $C^*(E)$ is quasidiagonal.
\item $C^*(E)$ is stably finite.
\item $C^*(E)$ is finite.
\item\label{it:1graphcharCycles} No cycle in $E$ has an entrance.
\item\label{it:1graphcharMX} The vertex matrix $A$ of $E$ satisfies
    \[
        \image(1 - A^t) \cap \NN{E^0} = \{0\}.
    \]
\item\label{it:1graphcharK0} The automorphism $\alpha$ of $C^*(E \times_d \ZZ)$ such
    that $\alpha(s_{(e,n)}) = s_{(e, n+1)}$ satisfies
    \[
        \image(1 - K_0(\alpha)) \cap K_0(C^*(E \times_d \ZZ))^+ = \{0\}.
    \]
\end{enumerate}
\end{lem}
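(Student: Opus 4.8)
The plan is to prove Lemma~\ref{lem:no positives} by establishing a cycle of implications, drawing on the machinery already in place. The implications (\ref{it:1graphcharAFE})${\implies}$(quasidiagonal)${\implies}$(stably finite)${\implies}$(finite) are either from Proposition~\ref{prp:superNate} (via the stable isomorphism of $C^*(E)$ with $C^*(E\times_d\ZZ)\times_\alpha\ZZ$ from Lemma~\ref{lem:Halpha and matrix}) or standard facts: quasidiagonal implies stably finite by \cite[Proposition~7.1.15]{BO}, and stably finite trivially implies finite for a $C^*$-algebra with an approximate unit of projections. For (finite)${\implies}$(\ref{it:1graphcharMX}) I would actually aim to prove the stronger (stably finite)${\implies}$(\ref{it:1graphcharMX}) directly from Theorem~\ref{thm-for-main}(\ref{prp-for-main2}) with $k=1$, which says precisely that stable finiteness of $C^*(E)$ forces $\image(1-A^t)\cap\NN E^0=\{0\}$; but since the lemma lists ``finite'' separately, I would note that finiteness of a $C^*$-algebra with an approximate unit of projections implies stable finiteness is false in general, so instead I would route (finite) into the chain via (\ref{it:1graphcharCycles}): a cycle with an entrance in $E$ produces an infinite projection in $C^*(E)$ itself (the standard Cuntz--Krieger computation showing $p_{r(\mu)}$ dominates a proper equivalent subprojection when $\mu$ is a cycle with an entrance), so (finite)${\implies}$(\ref{it:1graphcharCycles}).

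The genuinely new content, and the step I expect to be the main obstacle, is the purely combinatorial equivalence (\ref{it:1graphcharCycles})${\iff}$(\ref{it:1graphcharMX}). For (\ref{it:1graphcharCycles})${\implies}$(\ref{it:1graphcharMX}): suppose no cycle in $E$ has an entrance, and suppose $(1-A^t)b = c$ with $0\ne c\in\NN E^0$ and $b\in\ZZ E^0$. The condition ``no cycle has an entrance'' means every vertex emits at most one edge on each cycle and in fact the graph decomposes so that vertices are either on a (single-edged) cycle or feed into one; I would analyse the equation $b_v = \sum_{w}A^t(v,w)b_w + c_v = \sum_{e\in vE^1}b_{s(e)} + c_v$ coordinatewise. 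On a cycle with no entrance, each vertex $v$ has $|vE^1|=1$, so the equation reads $b_v = b_{s(e_v)} + c_v$ going around the cycle; summing around the cycle gives $0 = \sum c_v$ over the cycle, forcing $c$ to vanish on every cycle. Then one propagates backwards: any vertex not on a cycle must, since $E$ has no sources and finitely... well, here I need to be careful, as $E$ need not be finite or have the property that every path reaches a cycle. The cleaner route is the contrapositive: if $\image(1-A^t)\cap\NN E^0\ne\{0\}$, pick $0\ne c = (1-A^t)b$; I would argue that the support of $c$ must contain a vertex that ``eventually'' produces a cycle without entrance, using that on a graph with no sources the relation $b_v \ge \sum_{e\in vE^1}b_{s(e)}$ (where $c_v>0$) combined with nonnegativity forces a loop structure. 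Honestly, the slickest approach may be to prove (\ref{it:1graphcharMX})${\implies}$(\ref{it:1graphcharCycles}) by contraposition: a cycle $\mu$ with an entrance at vertex $v_0$ on the cycle; then I would try to exhibit an explicit nonzero element of $\image(1-A^t)\cap\NN E^0$. If $\mu = e_1\cdots e_n$ is a cycle with $s(e_i)=v_i$, $v_n=v_0$, and $v_j$ emits an extra edge, consider $b = \sum_i \delta_{v_i}$ (or a suitable combination); $(1-A^t)b$ at $v_i$ equals $b_{v_i} - \sum_{e\in v_iE^1}b_{s(e)}$, which is $1 - 1 = 0$ at vertices emitting only their cycle edge, and $\le 0$ at $v_j$. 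That gives a nonpositive, not nonnegative, element — so I would instead look at the transpose direction or use the entrance edge's range. The right construction: the entrance edge $f$ with $r(f)=v_j$ but $s(f)=u\notin\{v_1,\dots,v_n\}$ (or forming a path eventually re-entering); I would build $b$ supported on the cycle plus a backward-closed set and check the sign works out, which is the fiddly part.

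For the remaining implications closing the cycle, (\ref{it:1graphcharMX})${\implies}$(\ref{it:1graphcharK0}) is immediate from the final statement of Lemma~\ref{lem:Halpha and matrix} with $k=1$ (there $H_\alpha = \image(1-K_0(\alpha))$ by \eqref{eq:Halpha description}, so the matrix condition is literally the $K_0$ condition transported through $\rho$), and (\ref{it:1graphcharK0})${\implies}$(\ref{it:1graphcharAFE}) is where I invoke Brown's theorem \cite[Theorem~0.2]{Brown:JFA98}: since $C^*(E\times_d\ZZ)$ is AF (Lemma~\ref{lem:Halpha and matrix} again) and $C^*(E)$ is stably isomorphic to the crossed product $C^*(E\times_d\ZZ)\times_\alpha\ZZ$ of an AF algebra by $\ZZ$, Brown's result says the $K_0(\alpha)$-compresses-nothing condition implies AF-embeddability of the crossed product, and then Lemma~\ref{lem:basics} transfers AF-embeddability to $C^*(E)$. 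Alternatively, the entire equivalence of the analytic conditions (\ref{it:1graphcharAFE})--(finite) and (\ref{it:1graphcharCycles}) is already Schafhauser's theorem, which I would cite, so the only thing that truly needs writing is (\ref{it:1graphcharCycles})${\iff}$(\ref{it:1graphcharMX}); thus I would structure the proof as: cite Schafhauser for the chain among the $C^*$-conditions and (\ref{it:1graphcharCycles}), cite Lemma~\ref{lem:Halpha and matrix} for (\ref{it:1graphcharMX})${\iff}$(\ref{it:1graphcharK0}) and Brown for (\ref{it:1graphcharK0})${\implies}$(\ref{it:1graphcharAFE}), and devote the real work to the combinatorial equivalence, where I expect the backward-closure/loop-structure bookkeeping for non-finite graphs to be the main obstacle.
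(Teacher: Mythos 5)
Your architecture matches the paper's exactly: cite Schafhauser for the equivalence of (\ref{it:1graphcharAFE})--(\ref{it:1graphcharCycles}), use Lemma~\ref{lem:Halpha and matrix} with $k=1$ for (\ref{it:1graphcharMX})${\iff}$(\ref{it:1graphcharK0}), and reduce everything else to the combinatorial equivalence (\ref{it:1graphcharCycles})${\iff}$(\ref{it:1graphcharMX}). Those reductions are all sound, and in principle you could even close the whole cycle of implications without the combinatorial step, via Theorem~\ref{thm-for-main}(\ref{prp-for-main2}) with $k=1$ together with Brown's theorem and Lemma~\ref{lem:basics}. But the combinatorial equivalence --- which you correctly identify as the only part requiring real work, and which is the paper's stated contribution in this lemma --- is not actually established in your proposal, in either direction.

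For ``(\ref{it:1graphcharCycles}) fails ${\implies}$ (\ref{it:1graphcharMX}) fails'' you are one sign away from a complete proof: having observed that $b=\sum_i\delta_{r(\mu_i)}$ satisfies $(1-A^t)b\le 0$ with a strictly negative coordinate at $s(f)$ (where $f$ is the entrance), the fix is simply to take $a:=-b$, so that $(1-A^t)a\ge\delta_{s(f)}>0$; no ``transpose direction'' or ``backward-closed set'' is needed, and this is exactly what the paper does. The genuine gap is the converse. Your sketch (``forces a loop structure'', ``backward-closure bookkeeping'') must produce a cycle \emph{with} an entrance, and the naive coordinatewise propagation runs into precisely the difficulties you anticipate, since a positive element of $\image(1-A^t)$ need not be supported anywhere near a cycle. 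The paper's key device, absent from your proposal, is to choose $a$ with $(1-A^t)a\in\NN E^0\setminus\{0\}$ whose \emph{support is minimal}. A sign analysis of $((1-A^t)a)_v\ge 0$ (treating separately the cases where $a$ takes a negative value and where $a\ge 0$, the latter using that $E$ has no sources) yields a cycle $\mu$ all of whose vertices lie in $\supp(a)$; if $\mu$ had no entrance, then $b:=\sum_i\delta_{r(\mu_i)}$ would lie in $\ker(1-A^t)$, so $a':=a-a_{r(\mu)}b$ would have the same image under $1-A^t$ but strictly smaller support, contradicting minimality. Hence $\mu$ has an entrance. Without this (or an equivalent device) the implication (\ref{it:1graphcharMX})${\implies}$(\ref{it:1graphcharCycles}) remains unproved, so the direct combinatorial argument you set out to give is incomplete.
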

\begin{proof}
Schafhauser proves that (\ref{it:1graphcharAFE})--(\ref{it:1graphcharCycles}) are
equivalent in \cite[Theorem~1]{Schafhauser}. The equivalence
(\ref{it:1graphcharMX})${\iff}$(\ref{it:1graphcharK0}) follows from Lemma~\ref{lem:Halpha
and matrix} with $k=1$. So it suffices to establish
(\ref{it:1graphcharCycles})${\iff}$(\ref{it:1graphcharMX}).

For (\ref{it:1graphcharCycles})${\iff}$(\ref{it:1graphcharMX}), first suppose that
(\ref{it:1graphcharCycles}) does not hold, so $E$ has a cycle $\mu = \mu_1 \dots \mu_n$
with an entrance. Then $r(\mu_i)E^1 \not= \{\mu_i\}$ for some $i$; say $f \in r(\mu_i)E^1
\setminus \{\mu_i\}$. Let $a := \sum^n_{i=1} -\delta_{r(\mu_i)}$. Then
\begin{align*}
(1 - A^t)a
    &= \sum^n_{i=1} -\delta_{r(\mu_i)} + \sum_{e \in r(\mu_i)E^1} \delta_{s(e)}\\
    &= \sum^n_{i=1} -\delta_{r(\mu_i)}
        + \sum^n_{i=1} \delta_{s(\mu_i)}
        + \sum^n_{i=1} \sum_{e \in r(\mu_i)E^1 \setminus \{\mu_i\}} \delta_{s(e)}\\
    &\ge \delta_{s(f)}.
\end{align*}
So $(1 - A^t)a \in \big(\image(1 - A^t) \cap \NN{E^0}\big) \setminus \{0\}$.
Thus~(\ref{it:1graphcharMX}) does not hold.

Conversely, suppose that~(\ref{it:1graphcharMX}) does not hold. We must show that $E$
contains a cycle with an entrance. Choose $a \in \ZZ E^0$ such that $(1 - A^t)a \in
\NN{E^0} \setminus \{0\}$, and such that the support of $a$ is minimal in the following
sense: if $a' \in \ZZ E^0$ satisfies $(1 - A^t)a' \in \NN E^0 \setminus \{0\}$ and
$\supp(a') \subseteq \supp(a)$, then $\supp(a') = \supp(a)$.

We first claim that there is a cycle $\mu$ in $E$ such that $a_{r(\mu_i)} \not= 0$ for
every $i < |\mu|$. To see this first suppose that $a_v < 0$ for some $v \in E^0$. We have
\[
0 \le ((1 - A^t)a)_v = a_v - \sum_{e \in E^1 v} a_{r(e)},
\]
and since $a_v$ is strictly negative, it follows that there exists $e_1 \in E^1 v$ such
that $a_{r(e_1)} < 0$. Iterating this argument, we obtain edges $e_i$ such that
$s(e_{i+1}) = r(e_i)$ and $a_{r(e_i)} < 0$ for all $i$. Since $\supp(a)$ is finite, we
have $r(e_n) = s(e_m)$ for some $n \ge m$, and then $\mu = e_n \cdots e_{m}$ is the
desired cycle. Now suppose that $a_v \ge 0$ for all $v$. Since $(1 - A^t)a \not= 0$ we
have $a_v > 0$ for some $v$. Since $E$ has no sources, $v E^1$ is nonempty; choose $e_1
\in vE^1$. We have
\[
0 \le ((1 - A^t)a)_{s(e_1)}
    = a_{s(e_1)} - \sum_{f \in E^1 s(e_1)} a_{r(f)} \\
    = a_{s(e_1)} - a_v - \sum_{f \in E^1 s(e_1) \setminus \{e_1\}} a_{r(f)}.
\]
Since $a$ is nonnegative, we obtain $a_{s(e_1)} - a_v \ge 0$. Since $a_v > 0$, this
forces $a_{s(e_1)} > 0$. Again, repeating this argument gives edges $e_i$ with $s(e_i) =
r(e_{i+1})$ and $a_{s(e_i)} > 0$ for all $i$. So as above, since $\supp(a)$ is finite, we
have $r(e_n) = s(e_m)$ for some $n \ge m$ and then $\mu = e_n \dots e_m$ is the desired
cycle. This completes the proof of the claim.

Now fix a cycle $\mu = \mu_1 \dots \mu_n$ with each $a_{r(\mu_i)} \not= 0$. We prove that
$\mu$ has an entrance. To see this, we suppose to the contrary that $r(\mu_i) E^1 =
\{\mu_i\}$ for each $i$, and derive a contradiction. Since each $r(\mu_i)E^1 =
\{\mu_i\}$, the element $b := \sum^n_{i=1} \delta_{r(\mu_i)}$ belongs to $\ker(1 - A^t)$,
and since each $r(\mu_i)$ belongs to the support of $a$, the support of $b$ is contained
in $\supp(a)$. Hence
\[
a' := a - a_{r(\mu)} b
\]
satisfies $\supp(a') \subseteq \supp(a) \setminus \{r(\mu)\} \subsetneq \supp(a)$, and
\[
(1 - A^t)a' = (1 - A^t)a - a_{r(\mu)}(1 - A^t)b = (1 - A^t)a \in \NN E^0 \setminus \{0\},
\]
which contradicts minimality of the support of $a$. Thus $\mu$ is a cycle with an
entrance. This proves (\ref{it:1graphcharCycles})${\iff}$(\ref{it:1graphcharMX}).
\end{proof}

\section{2-graphs with no red cycles}\label{sec:2graphs}

In this section we show how to apply Brown's characterisation of AF-embeddability of
crossed products of AF algebras by $\ZZ$ to characterise when the $C^*$-algebra of a
2-graph $\Lambda$ with no red cycles is AF-embeddable. Symmetry gives a similar
characterisation for $2$-graphs with no blue cycles too. The main result in the section
is the following; note that, unlike in Theorem~\ref{thm-main}(\ref{thm-main-item2}), no
cofinality hypothesis is required in this result.

\begin{thm}\label{thm:1stchar2}
Let $\Lambda$ be a row-finite  $2$-graph with no sources  and let $A_i$ be the coordinate
matrices of $\Lambda$ with entries $A_i(v,w) = |v\Lambda^{e_i}w|$ for $v, w\in\Lambda^0$.
Suppose that $\Lambda$ contains no red cycles or that $\Lambda$ contains no blue cycles.
Then the following are equivalent:
\begin{enumerate}
\item\label{it:1stchar2i} $C^*(\Lambda)$ is AF-embeddable;
\item\label{it:1stchar2ii} $C^*(\Lambda)$ is quasidiagonal;
\item\label{it:1stchar2iii} $C^*(\Lambda)$ is stably finite;
\item\label{it:1stchar2iv}  $\big(\image(1-A^t_1) + \image(1-A^t_2)\big) \cap
    \NN{\Lambda^0} = \{0\}$.
\end{enumerate}
\end{thm}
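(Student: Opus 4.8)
By the symmetry between the two colours we may assume that $\Lambda$ has no red cycles; then the red coordinate graph $E := \Lambda^{\NN e_2}$ is a row-finite directed graph with no sources and no cycles, so $C^*(E)$ is AF. The implications (\ref{it:1stchar2i})${}\implies{}$(\ref{it:1stchar2ii})${}\implies{}$(\ref{it:1stchar2iii}) are the standard facts recorded in Proposition~\ref{prp:superNate}, and (\ref{it:1stchar2iii})${}\implies{}$(\ref{it:1stchar2iv}) is Theorem~\ref{thm-for-main}(\ref{prp-for-main2}), which carries no cofinality hypothesis. So the substance of the theorem is the implication (\ref{it:1stchar2iv})${}\implies{}$(\ref{it:1stchar2i}), and the plan is to realise $C^*(\Lambda)$, up to stable isomorphism, as a crossed product of an AF algebra by $\ZZ$ and then apply Brown's theorem \cite[Theorem~0.2]{Brown:JFA98} in full.

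First I would form the skew-product $2$-graph $\Sigma := \Lambda \times_{d_1} \ZZ$, where $d_1 \colon \Lambda \to \NN$ is the first-coordinate component of the degree; in $\Sigma$ each blue edge raises the $\ZZ$-label by one and each red edge leaves it fixed. As recalled before Lemma~\ref{lem:Halpha and matrix}, \cite{KP} gives $C^*(\Sigma) \cong C^*(\Lambda) \times_{\beta^{d_1}} \TT$ for the action $\beta^{d_1}$, and Takai duality \cite{Takai} then shows that $C^*(\Lambda)$ is stably isomorphic to $C^*(\Sigma) \times_\alpha \ZZ$, where $\alpha$ is the automorphism of $C^*(\Sigma)$ determined by $\alpha_m(s_{(\lambda,n)}) = s_{(\lambda, n-m)}$. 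Next I would note that $\Sigma$ has no cycles at all: a cycle in $\Sigma$ has the form $(\lambda, n)$ with $r(\lambda) = s(\lambda)$ and $d_1(\lambda) = 0$, since otherwise the $\ZZ$-label strictly increases along it; so it is a red cycle in $\Lambda$, and there are none. Hence $\Sigma$ is a row-finite $2$-graph with no sources and no cycles, so $C^*(\Sigma)$ is AF --- by the structure theory for higher-rank graph $C^*$-algebras without cycles \cite{EvansPhD, EvansSims:JFA2012}, or, more self-containedly, via an explicit Bratteli-type direct-limit decomposition of $C^*(\Sigma)$ built from $C^*(E)$ and the blue structure. Since $C^*(\Sigma)$ is AF, Brown's theorem applies and tells us that $C^*(\Sigma) \times_\alpha \ZZ$ --- hence $C^*(\Lambda)$ --- is AF-embeddable (equivalently quasidiagonal, equivalently stably finite) if and only if $K_0(\alpha)$ compresses no elements of $K_0(C^*(\Sigma))$, that is, $\image\bigl(1 - K_0(\alpha)\bigr) \cap K_0(C^*(\Sigma))^+ = \{0\}$.

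It then remains to identify this $K$-theoretic condition with condition~(\ref{it:1stchar2iv}), and this is where I expect the real difficulty to lie. One direction is free: if the $K$-theoretic condition holds then $C^*(\Lambda)$ is stably finite by Brown, so~(\ref{it:1stchar2iv}) follows from Theorem~\ref{thm-for-main}(\ref{prp-for-main2}). For the converse I would compute the ordered group $K_0(C^*(\Sigma))$ together with the automorphism $K_0(\alpha)$ explicitly in terms of the coordinate matrices. The layered structure of $\Sigma$ presents $K_0(C^*(\Sigma))$ as a direct limit of copies of $K_0(C^*(E))$: the ``within a level'' data is that of the cycle-free graph $E$, which has ordered $K_0$-group $\coker(1 - A_2^t)$ with positive cone the image of $\NN\Lambda^0$ by Lemma~\ref{lem:positive cone}, while the level-raising maps --- and the induced automorphism $K_0(\alpha)$ --- are implemented by $A_1^t$, which descends to $\coker(1 - A_2^t)$ because $A_1^t A_2^t = A_{e_1 + e_2}^t = A_2^t A_1^t$. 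Assuming~(\ref{it:1stchar2iv}) fails, I would choose a nonzero positive element of $\image(1 - K_0(\alpha))$, pull it --- and a preimage of it under $1 - K_0(\alpha)$ --- back to a finite stage of the direct limit, and then run the same bookkeeping as in the proof of Lemma~\ref{lem:Halpha and matrix}: replacing representatives, applying connecting maps, and using that $\Lambda$ has no sources (so the relevant matrices have no zero columns) to manufacture a nonzero element of $\bigl(\image(1 - A_1^t) + \image(1 - A_2^t)\bigr) \cap \NN\Lambda^0$. The summand $\image(1 - A_2^t)$ appears exactly because the positive cone of $C^*(\Sigma)$ is a direct limit of the cones described in Lemma~\ref{lem:positive cone}, not a single copy of $\NN\Lambda^0$; carrying out this last translation carefully is the longest and most delicate part of the argument, and would occupy most of the section.
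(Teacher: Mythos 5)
Your architecture is exactly the paper's: skew by the blue coordinate, use \cite{KP} and Takai duality to realise $C^*(\Lambda)$ up to stable isomorphism as $C^*(\Lambda\times_{c}\ZZ)\times_\alpha\ZZ$ with $c(\lambda)=d(\lambda)_1$, apply Brown's theorem, and translate the resulting $K$-theoretic condition into condition~(\ref{it:1stchar2iv}) via Lemma~\ref{lem:positive cone}. Your observation that one half of that translation comes for free --- Brown gives stable finiteness from the $K$-theoretic condition, and then Theorem~\ref{thm-for-main}(\ref{prp-for-main2}), which needs no cofinality, gives~(\ref{it:1stchar2iv}) --- is valid and actually bypasses part of the combinatorics in Lemma~\ref{Halpha and the red condition}. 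But one step rests on a false principle. You assert that since $\Sigma=\Lambda\times_c\ZZ$ has no cycles, $C^*(\Sigma)$ is AF ``by the structure theory for higher-rank graph $C^*$-algebras without cycles''. No such theorem exists, and the inference is false in general: Section~\ref{sec:examples} of this paper exhibits $2$-graphs with skeleton~\eqref{eq:generic2graph}, which contain no cycles of either colour, whose $C^*$-algebras are purely infinite, hence as far from AF as possible. Deciding when a cycle-free $k$-graph algebra is AF is precisely the open problem of \cite{EvansPhD, EvansSims:JFA2012}. What makes $C^*(\Sigma)$ AF is not the absence of cycles but the layered structure of the skew product: only red edges join vertices within a fixed $\ZZ$-level, so $C^*(\Sigma)=\overline{\bigcup_n B_n}$ where each $B_n$ contains a full corner isomorphic to $C^*(\Lambda^{\NN e_2})$, and the latter is AF because the red graph is cycle-free. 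This is your fallback ``Bratteli-type decomposition built from $C^*(E)$ and the blue structure'', and it is the content of Proposition~\ref{prp:skew}; it must be the proof, not an optional alternative.

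The remaining deferred work --- identifying $\image(1-K_0(\alpha))\cap K_0(C^*(\Sigma))^+=\{0\}$ with condition~(\ref{it:1stchar2iv}) --- is correctly located, and your description of $K_0(C^*(\Sigma))$ as $\varinjlim(\coker(1-A_2^t),\widetilde{A_1^t})$ with positive cone the limit of the cones of Lemma~\ref{lem:positive cone}, and of $K_0(\alpha)$ as $\widetilde{A_1^t}$, matches Lemma~\ref{lem K of scewed graph algebra}. Two small cautions. First, in your final paragraph you write ``assuming~(\ref{it:1stchar2iv}) fails'' where you mean ``assuming the $K$-theoretic condition fails''; the contrapositive you then sketch (a nonzero positive class in $\image(1-K_0(\alpha))$ yields a nonzero element of $\bigl(\image(1-A_1^t)+\image(1-A_2^t)\bigr)\cap\NN\Lambda^0$) is the right one. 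Second, when you pull classes back to finite stages you must cope with the fact that everything is only determined modulo $\image(1-A_2^t)$ and modulo the kernels of the connecting maps $(\widetilde{A_1^t})_{n,\infty}$; the paper's Lemma~\ref{Halpha and the red condition} handles this by invoking the implication (\ref{it:1graphcharCycles})${\implies}$(\ref{it:1graphcharMX}) of Lemma~\ref{lem:no positives} for the cycle-free red graph (giving $\image(1-A_2^t)\cap\NN\Lambda^0=\{0\}$) as well as the no-sources argument you mention. You should expect to need that ingredient, not just the absence of zero columns in $A_1^t$.
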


To prove Theorem~\ref{thm:1stchar2}, we identify a skew-product $\Lambda \times_c \ZZ$
whose $C^*$-algebra is AF, and   an automorphism $\alpha$ of $C^*(\Lambda \times_c \ZZ)$
such that $C^*(\Lambda \times_c \ZZ)\times_\alpha\ZZ$ is stably isomorphic to
$C^*(\Lambda)$.  We then compute the range  of $K_0(\alpha)$ in $K_0(C^*(\Lambda \times_c
\ZZ))$ to apply Brown's theorem.

\begin{prp}\label{prp:skew}
Let $\Lambda$ be a row-finite $2$-graph with no sources. Define $c : \Lambda \to \NN$ by
$c(\lambda) = d(\lambda)_1$ and consider the $C^*$-algebra $C^*(\Lambda\times_c\ZZ)$ of
the skew-product graph $\Lambda\times_c\ZZ$.
\begin{enumerate}
\item\label{prp:skew1} For each $n \in \ZZ$,
\[
B_n := \clsp\{s_{(\mu,p)} s^*_{(\nu,q)} : p+c(\mu) = q+c(\nu) = n\}
\]
is a $C^*$-subalgebra of $C^*(\Lambda\times_c \ZZ)$, and we have $B_n \subseteq
B_{n+1}$ for all $n$.
\item\label{prp:skew1.5} $C^*(\Lambda \times_c \ZZ) = \overline{\bigcup_n B_n}$.
\item\label{prp:skew2} For each $n \in \ZZ$, set $P_n:=\sum_{v \in \Lambda^0}
    p_{(v,n)} \in M( C^*(\Lambda \times_c \ZZ))$. Then $P_nB_nP_n$ is a full corner
    in $B_n$ and is canonically isomorphic to  the $C^*$-algebra $C^*(\Lambda^{\NN
    e_2})$ of the directed graph $\Lambda^{\NN e_2}:=(\Lambda^0, \Lambda^{e_2}, r,
    s)$.
\item\label{prp:skew3} Suppose that $\Lambda$ has no red cycles. Then $C^*(\Lambda
    \times_c \ZZ)$ is an AF algebra.
\end{enumerate}
\end{prp}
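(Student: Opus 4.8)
The plan is to filter $C^*(\Lambda\times_c\ZZ)$ by the subalgebras $B_n$, identify a full corner of each $B_n$ with the graph algebra of the red subgraph, and then invoke the fact that a cycle-free graph has an AF algebra. Parts~(\ref{prp:skew1}) and~(\ref{prp:skew1.5}) are $\ZZ$-coordinate bookkeeping with the Cuntz--Krieger relations: $B_n$ is plainly self-adjoint, and it is closed under multiplication because in a product of two $B_n$-generators the relation $s_\eta^* s_\zeta = \sum_{(\alpha,\beta)\in\Hoho(\eta,\zeta)} s_\alpha s_\beta^*$ can contribute only terms whose connecting paths $\alpha,\beta$ have $\ZZ$-coordinate $n$ at their range --- forcing them to have blue degree $0$ --- so every surviving term is again a $B_n$-generator; applying (CK4) in the blue direction, $p_{s(\mu,p)} = \sum_{\rho\in s(\mu)\Lambda^{e_1}} s_{(\rho,n)}s_{(\rho,n)}^*$, rewrites each $B_n$-generator as a sum of $B_{n+1}$-generators, giving $B_n\subseteq B_{n+1}$; and since $C^*(\Lambda\times_c\ZZ)=\clsp\{s_\eta s_\zeta^*:s(\eta)=s(\zeta)\}$ with each $s_{(\lambda,g)}s_{(\mu,h)}^*\in B_{g+c(\lambda)}$, the union of the $B_n$ is dense.

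For part~(\ref{prp:skew2}), $p_{(v,n)}s_{(\mu,p)}\neq0$ forces $p=n$, hence $c(\mu)=0$, hence $\mu$ is a red path; so $P_nB_nP_n=\clsp\{s_{(\mu,n)}s_{(\nu,n)}^* : \mu,\nu\text{ red},\ s(\mu)=s(\nu)\}$. Since $c$ vanishes on red paths, the family $\{s_{(v,n)}:v\in\Lambda^0\}\cup\{s_{(\mu,n)}:\mu\in\Lambda^{e_2}\}$ is a Cuntz--Krieger $\Lambda^{\NN e_2}$-family generating $P_nB_nP_n$, with all vertex projections nonzero; restricting the gauge action of $\TT^2$ on $C^*(\Lambda\times_c\ZZ)$ to its second variable gives a compatible $\TT$-action, so the gauge-invariant uniqueness theorem for graph algebras yields the canonical isomorphism $P_nB_nP_n\cong C^*(\Lambda^{\NN e_2})$. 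Fullness follows from $s_{(\mu,p)}s_{(\nu,q)}^* = s_{(\mu,p)}\,p_{(s(\mu),n)}\,s_{(\nu,q)}^*$ together with $p_{(s(\mu),n)}\le P_n$, which shows the ideal of $B_n$ generated by $P_n$ is all of $B_n$.

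Part~(\ref{prp:skew3}) is then a short assembly. If $\Lambda$ has no red cycles then $\Lambda^{\NN e_2}$ has no cycles at all, for a shortest closed red path has pairwise distinct source vertices and hence is a red cycle. A row-finite graph with no cycles has an AF $C^*$-algebra (see, e.g., \cite{KPR}), so by~(\ref{prp:skew2}) each $B_n$ has the AF full corner $P_nB_nP_n\cong C^*(\Lambda^{\NN e_2})$; since $B_n$ is separable, it is Morita equivalent to, hence stably isomorphic by \cite{BGR} to, an AF algebra, and so is AF by Lemma~\ref{lem:basics}. Finally $C^*(\Lambda\times_c\ZZ)=\overline{\bigcup_n B_n}$ is the closure of an increasing union of AF subalgebras of a separable $C^*$-algebra, hence AF. I do not expect a real obstacle in~(\ref{prp:skew3}) itself; the care is concentrated earlier, in the $\ZZ$-coordinate bookkeeping of~(\ref{prp:skew1}) and in checking, in~(\ref{prp:skew2}), that the red generators genuinely satisfy the $\Lambda^{\NN e_2}$ relations and that the uniqueness theorem applies. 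The one genuinely new ingredient is the combinatorial observation that ``no red cycles'' makes $\Lambda^{\NN e_2}$ cycle-free, which is precisely what makes $C^*(\Lambda\times_c\ZZ)$ approximately finite-dimensional.
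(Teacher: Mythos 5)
Your proof is correct and follows essentially the same route as the paper: the same $\ZZ$-graded filtration by the $B_n$, the same identification of $P_nB_nP_n$ with $C^*(\Lambda^{\NN e_2})$ via a Cuntz--Krieger family and the gauge-invariant uniqueness theorem, and the same assembly of part~(4) from \cite{KPR}, \cite{BGR} and Lemma~\ref{lem:basics}. The small details you add beyond the paper's write-up (the explicit $\TT$-action obtained by restricting the $\TT^2$-gauge action, and the observation that a shortest closed red path has distinct vertices) are accurate and fill in steps the paper leaves implicit.
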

\begin{proof}
\eqref{prp:skew1} Each $B_n$ is self-adjoint.  To see that $B_n$ is closed under
multiplication,  fix spanning elements $a:=s_{(\mu,p)}s_{(\nu,q)}^*$ and
$b:=s_{(\alpha,q')}s_{(\beta,m)}^*$ of $B_n$; it suffices to see that $ab\in B_n$. If
$r(\alpha,q') \not= r(\nu,q)$, then $ab = 0$, so we assume that $r(\alpha) = r(\nu)$ and
$q' = q$. Now $d(\nu)_1 = c(\nu) = n - q = c(\alpha) = d(\alpha)_1$, and so if
$(\sigma,\tau)\in \Hoho(\nu,\alpha)$, then  $c(\sigma)=d(\sigma)_1=0=d(\tau)_1=c(\tau)$.
We have
\begin{align*}
\Hoho((\nu, q), (\alpha, q))=\big\{\big( (\sigma, c(\nu)+q),(\tau, c(\alpha)+q) \big):(\sigma,\tau)\in\Hoho(\nu,\alpha)\big\}
\end{align*}
and hence
\begin{align*}
s_{(\mu,p)}s_{(\nu,q)}^*s_{(\alpha,q)}s_{(\beta,m)}^*
    &=s_{(\mu,p)}\Big(\sum_{(\sigma,\tau)\in\Hoho(\nu,\alpha)}  s_{(\sigma, q+c(\nu))}s_{(\tau, q+c(\alpha))}^*\Big)s_{(\beta,m)}^*\\
    &=s_{(\mu,p)}\Big(\sum_{(\sigma,\tau)\in\Hoho(\nu,\alpha)}  s_{(\sigma, p+c(\mu))}s_{(\tau, n+c(\beta))}^*\Big)s_{(\beta,m)}^*\\
    &=\sum_{(\sigma,\tau)\in\Hoho(\nu,\alpha)}  s_{(\mu\sigma,p)}s_{(\beta\tau, m)}^*.
\end{align*}
For each $(\sigma,\tau) \in \Hoho(\nu,\alpha)$, we have
$c(\mu\sigma)+p=c(\mu)+p=n=c(\beta)+m=c(\beta\tau)+m$, and hence $ab\in B_n$ as required.
It follows that $B_n$ is a $C^*$-subalgebra of $C^*(\Lambda\times_c \ZZ)$.

To see that $B_n\subseteq B_{n+1}$, let $s_{(\mu,p)}s_{(\nu,q)}^*$ in $B_n$. Then
\[
s_{(\mu,p)} s^*_{(\nu,q)}
    = \sum_{\alpha \in s(\mu)\Lambda^{e_1}} s_{(\mu,p)} s_{(\alpha, n)} s^*_{(\alpha, n)} s^*_{(\nu,q)}
    = \sum_{\alpha \in s(\mu)\Lambda^{e_1}} s_{(\mu\alpha,p)} s^*_{(\nu\alpha,q)}
\]
with $c(\mu\alpha)+p=c(\mu)+p+c(\alpha)=n+c(\alpha)=n+1=c(\nu\alpha)+q$. Thus
$s_{(\mu,p)} s^*_{(\nu,q)}\in B_{n+1}$ and it follows that $B_n\subseteq B_{n+1}$.

\eqref{prp:skew1.5} If  $s_{(\mu,p)} s^*_{(\nu,q)}$ is a nonzero spanning element of
$C^*(\Lambda\times_c\ZZ)$, then $s(\mu,p)=s(\nu,q)$ implies $c(\mu)+p=c(\nu)+q$, and
hence $s_{(\mu,p)} s^*_{(\nu,q)}\in B_{c(\mu)+p}$. Thus $C^*(\Lambda \times_c
\ZZ)=\overline{\bigcup B_n}$.

\eqref{prp:skew2} Each spanning element in $B_n$ can be written $s_{(\mu,p)}
s^*_{(\nu,q)} = s_{(\mu,p)} p_{(s(\mu), n)} s^*_{(\nu,q)} = s_{(\mu,p)} P_n
s^*_{(\nu,q)}$. Thus $B_n=   B_nP_nB_n$, and $P_nB_nP_n$ is a full corner in $B_n$.

Another calculation of the sort we have been doing shows that
\[
P_nB_nP_n=\clsp\{ s_{(\mu,n)} s^*_{(\nu,n)}:c(\mu)=c(\nu)=0 \}=\clsp\{ s_{(\mu,n)} s^*_{(\nu,n)}:\mu,\nu\in\Lambda^{\NN e_2} \}.
\]
The collection $\{p_{(v,n)}, s_{(e,n)}:v\in\Lambda^0, e\in\Lambda^{e_2}\}$ forms a
Cuntz--Krieger family for the directed graph $\Lambda^{\NN e_2}$.
It follows that there is a  homomorphism  $s_\mu \mapsto s_{(\mu,n)}$ from
$C^*(\Lambda^{\NN e_2})$ to $P_n B_n P_n$, which is surjective because the $\{p_{(v,n)},
s_{(e,n)}\}$ generate $P_n B_n P_n$.  Since each $p_{(v,n)}\neq 0$,  the gauge-invariant
uniqueness theorem \cite[Theorem~2.1]{BPRS} for directed graphs implies that  $s_\mu
\mapsto s_{(\mu,n)}$  is injective.

\eqref{prp:skew3} Since there are no red cycles in $\Lambda$, the directed graph
$\Lambda^{\NN e_2}$ has no cycles, and hence $C^*(\Lambda^{\NN e_2})$ is an AF algebra by
\cite[Theorem~2.4]{KPR}. By \eqref{prp:skew2}, $P_n B_n P_n$ and $C^*(\Lambda^{\NN e_2})$
are isomorphic, and hence $P_n B_n P_n$ is an AF algebra. Since $P_n B_n P_n$ is a full
corner of $B_n$, the two are stably isomorphic \cite{BGR}, so Lemma~\ref{lem:basics}
implies that $B_n$ is AF. Now $C^*(\Lambda \times_c \ZZ)$ is a direct limit of AF
algebras and hence is AF.
\end{proof}

Let $\Lambda$ be a row-finite  $2$-graph with no sources. Define $c : \Lambda \to \NN$ by
$c(\lambda) = d(\lambda)_1$. As in \cite[Remark~5.6]{KP}, there is an automorphism
$\alpha$ of $C^*(\Lambda \times_c \ZZ)$ such that
\begin{equation}\label{eq:alpha and morita}
\alpha(s_{(\lambda, n)}) = s_{(\lambda, n-1)}\quad\text{ for $\lambda\in\Lambda$ and $n\in\ZZ$.}
\end{equation}
The second statement of \cite[Theorem~5.7]{KP} implies that $C^*(\Lambda \times_c \ZZ)
\times_\alpha \ZZ$ is stably isomorphic to $C^*(\Lambda)$.

\begin{lem}\label{lem K of scewed graph algebra}
Let $\Lambda$ be a row-finite $2$-graph with no sources, and suppose that $\Lambda$
contains no red cycles. Define $c : \Lambda \to \NN$ by $c(\lambda) = d(\lambda)_1$.
\begin{enumerate}
\item\label{lem K of scewed graph algebra1} Let $A_1$ and $A_2$ denote the coordinate
    matrices of $\Lambda$. Then $A_1^t$ induces a homomorphism \[\widetilde{A_1^t} :
    \coker(1 - A_2^t) \to \coker(1 - A_2^t),\] and there is an isomorphism
\[
\rho : K_0(C^*(\Lambda \times_c \ZZ)) \to \varinjlim(\coker(1 - A_2^t), \widetilde{A_1^t}).
\]
Let $(\widetilde{A_1^t})_{n,\infty}$ be the canonical inclusion of the $n$th
approximating copy of $\coker(1 - A_2^t)$ in $\varinjlim(\coker(1 - A_2^t),
\widetilde{A_1^t})$. For $v\in\Lambda^0$ and $n\in\NN$  we have
\[
\rho([p_{(v,n)}])=(\widetilde{A_1^t})_{n,\infty}(\delta_v+\image(1-A_2^t)),
\]
and
\begin{equation}\label{eq:positive cone image}
\rho(K_0(C^*(\Lambda \times_c \ZZ))^+)=\bigcup_n (\widetilde{A_1^t})_{n,\infty}(\NN{\Lambda^0} + \image(1 - A_2^t)).
\end{equation}
\item\label{lem K of scewed graph algebra2} Let $\alpha$ be the automorphism of
    $C^*(\Lambda \times_c \ZZ)$ of~\eqref{eq:alpha and morita}. Then the following
    diagram commutes for each $n$:
\[
\begin{tikzpicture}[xscale=1.5]
    \node (02) at (0,2) {\small$\coker(1 - A_2^t)$};
    \node (00) at (0,0) {\small$\coker(1 - A_2^t)$};
    \node (22) at (2.3,2) {\small$\coker(1 - A_2^t)$};
    \node (20) at (2.3,0) {\small$\coker(1 - A_2^t)$};
    \node (82) at (6,2) {\small$\varinjlim(\coker((1 - A_2)^t), \widetilde{A_1^t})$};
    \node (80) at (6,0) {\small$\varinjlim(\coker((1 - A_2)^t), \widetilde{A_1^t})$};
    \draw[-stealth] (02)--(22) node[pos=0.5, below] {\small$\widetilde{A_1^t}$};
    \draw[-stealth] (00)--(20) node[pos=0.5, above] {\small$\widetilde{A_1^t}$};
    \draw[-stealth] (22)--(82) node[pos=0.5, below] {\small$(\widetilde{A_1^t})_{n+1,\infty}$};
    \draw[-stealth] (20)--(80) node[pos=0.5, above] {\small$(\widetilde{A_1^t})_{n+1, \infty}$};
    \draw[-stealth, out=340, in=200] (00) to node[pos=0.5, below] {\small$(\widetilde{A_1^t})_{n,\infty}$} (80);
    \draw[-stealth] (02)--(00) node[pos=0.5, right] {\small$\widetilde{A_1^t}$};
    \draw[-stealth] (22)--(20) node[pos=0.5, right] {\small$\widetilde{A_1^t}$};
    \draw[-stealth] (82)--(80) node[pos=0.5, right] {\small$\rho\circ K_0(\alpha)\circ\rho^{-1}$};
\end{tikzpicture}
\]
\item\label{lem K of scewed graph algebra3} Let $H_\alpha$ be the subgroup of
    $K_0(C^*(\Lambda \times_c \ZZ))$ generated by \[\{(\id -K_0(\alpha_g))
    K_0(C^*(\Lambda \times_c \ZZ)) : g \in \ZZ\}.\] Then
\begin{equation}\label{eq H alpha in action}
\begin{split}
\rho\big(H_\alpha\cap{}& K_0(C^*(\Lambda \times_c \ZZ))^+\big) \\
    &=\textstyle \bigcup_n (\widetilde{A_1^t})_{n,\infty}\Big(\big(\image(1-A_1^t) + \image(1 - A_2^t) \big)
            \cap \big(\NN{\Lambda^0}+\image(1 - A_2^t)\big)\Big).
\end{split}
\end{equation}
\end{enumerate}
\end{lem}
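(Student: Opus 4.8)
The plan is to push everything through the filtration $C^*(\Lambda\times_c\ZZ)=\overline{\bigcup_n B_n}$ of Proposition~\ref{prp:skew}, whose stages are stably isomorphic to the cycle-free graph algebra $C^*(\Lambda^{\NN e_2})$. For part~\eqref{lem K of scewed graph algebra1}, I would first observe that $A_1^t$ descends to $\coker(1-A_2^t)$: the factorisation property gives $A_1A_2=A_{e_1+e_2}=A_2A_1$, so $A_1^t$ commutes with $1-A_2^t$ and maps $\image(1-A_2^t)$ into itself, defining $\widetilde{A_1^t}$. Proposition~\ref{prp:skew}\eqref{prp:skew2}, combined with the discussion preceding Lemma~\ref{lem:positive cone} applied to the cycle-free graph $\Lambda^{\NN e_2}$, then provides for each $n$ an isomorphism $\phi_n:K_0(B_n)\to\coker(1-A_2^t)$ with $\phi_n([p_{(v,n)}])=\delta_v+\image(1-A_2^t)$, and Lemma~\ref{lem:positive cone} says $\phi_n$ carries $K_0(B_n)^+$ onto the image of $\NN\Lambda^0$ in $\coker(1-A_2^t)$. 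The heart of this part is to identify the connecting map $B_n\hookrightarrow B_{n+1}$ on $K_0$: using (CK4) one writes $p_{(v,n)}=\sum_{\alpha\in v\Lambda^{e_1}}s_{(\alpha,n)}s_{(\alpha,n)}^*$ with each $s_{(\alpha,n)}s_{(\alpha,n)}^*$ a projection of $B_{n+1}$ that is Murray--von Neumann equivalent in $B_{n+1}$ to $p_{(s(\alpha),n+1)}$, so the inclusion sends $[p_{(v,n)}]$ to $\sum_w A_1(v,w)[p_{(w,n+1)}]$, i.e.\ the $\phi_n$ intertwine it with $\widetilde{A_1^t}$. Continuity of $K_0$ then yields the isomorphism $\rho:=\varinjlim\phi_n$ with the asserted formula on the $[p_{(v,n)}]$, and~\eqref{eq:positive cone image} follows because the positive cone of the $K_0$-group of an inductive limit of $C^*$-algebras is the union of the images of the positive cones of the building blocks.

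For part~\eqref{lem K of scewed graph algebra2}, the point is that the automorphism $\alpha$ of~\eqref{eq:alpha and morita} restricts to an isomorphism $B_n\to B_{n-1}$ with $\alpha(p_{(v,n)})=p_{(v,n-1)}$, so that $\phi_{n-1}\circ K_0(\alpha|_{B_n})\circ\phi_n^{-1}$ is the identity of $\coker(1-A_2^t)$. Combining this with the compatibility $\rho\circ(\iota_m)_*=(\widetilde{A_1^t})_{m,\infty}\circ\phi_m$ from part~\eqref{lem K of scewed graph algebra1} (where $\iota_m:B_m\hookrightarrow C^*(\Lambda\times_c\ZZ)$) and the tautology $\iota_{n-1}\circ\alpha|_{B_n}=\alpha\circ\iota_n$ in a short diagram chase gives $(\rho\circ K_0(\alpha)\circ\rho^{-1})\circ(\widetilde{A_1^t})_{n,\infty}=(\widetilde{A_1^t})_{n-1,\infty}=(\widetilde{A_1^t})_{n,\infty}\circ\widetilde{A_1^t}$, which is exactly the commutativity of the non-trivial cell of the displayed diagram; the other cells commute by the universal property of $\varinjlim$.

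For part~\eqref{lem K of scewed graph algebra3}, since $\ZZ$ is generated by $1$, Lemma~\ref{it:helper} gives $H_\alpha=(\id-K_0(\alpha))K_0(C^*(\Lambda\times_c\ZZ))$. Writing $q:\ZZ\Lambda^0\to\coker(1-A_2^t)$ for the quotient and using part~\eqref{lem K of scewed graph algebra2} to evaluate $\rho K_0(\alpha)\rho^{-1}$, one computes $(\id-\rho K_0(\alpha)\rho^{-1})\big((\widetilde{A_1^t})_{n,\infty}(q(y))\big)=(\widetilde{A_1^t})_{n,\infty}\big(q((1-A_1^t)y)\big)$, whence $\rho(H_\alpha)=\bigcup_n(\widetilde{A_1^t})_{n,\infty}(q(\image(1-A_1^t)))$, while $\rho(K_0(C^*(\Lambda\times_c\ZZ))^+)=\bigcup_n(\widetilde{A_1^t})_{n,\infty}(q(\NN\Lambda^0))$ by~\eqref{eq:positive cone image}. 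The remaining, and most delicate, task is to intersect these two increasing unions: I would do this by noting that $q(\image(1-A_1^t))$ and $q(\NN\Lambda^0)$ are each invariant under $\widetilde{A_1^t}$ --- the former because $A_1^t$ commutes with $1-A_1^t$, the latter because $A_1^t$ is a nonnegative integer matrix --- and then using the description $\ker((\widetilde{A_1^t})_{n,\infty})=\bigcup_p\ker((\widetilde{A_1^t})^p)$ of the kernel of a canonical map into an inductive limit to conclude that the intersection of the two unions equals $\bigcup_n(\widetilde{A_1^t})_{n,\infty}\big(q(\image(1-A_1^t))\cap q(\NN\Lambda^0)\big)$. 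Finally the elementary identity $q(S)\cap q(T)=q\big((S+\ker q)\cap(T+\ker q)\big)$, valid for arbitrary subsets $S,T$ of an abelian group, rewrites $q(\image(1-A_1^t))\cap q(\NN\Lambda^0)$ as the image under $q$ of $\big(\image(1-A_1^t)+\image(1-A_2^t)\big)\cap\big(\NN\Lambda^0+\image(1-A_2^t)\big)$, which is exactly~\eqref{eq H alpha in action}.

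I expect the main obstacles to be the bookkeeping in part~\eqref{lem K of scewed graph algebra1} --- transporting $[p_{(v,n)}]$ and the order structure through the full-corner isomorphism $B_n\sim P_nB_nP_n\cong C^*(\Lambda^{\NN e_2})$, through Lemma~\ref{lem:positive cone}, and through $B_n\hookrightarrow B_{n+1}$ --- together with the ``intersection of unions equals union of intersections'' step in part~\eqref{lem K of scewed graph algebra3}, which fails for arbitrary subsets of an inductive limit and works here only because of the $\widetilde{A_1^t}$-invariance above, so I would take care to make that invariance explicit.
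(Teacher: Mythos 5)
Your proposal is correct and follows essentially the same route as the paper: the filtration $B_n$ of Proposition~\ref{prp:skew}, the full-corner isomorphism with $C^*(\Lambda^{\NN e_2})$, Lemma~\ref{lem:positive cone} for the positive cone, and the identification of the connecting maps and of $K_0(\alpha)$ on the classes $[p_{(v,n)}]$ via (CK4). The one noteworthy difference is in part~(3), where you make explicit the ``intersection of unions equals union of intersections'' step --- via the $\widetilde{A_1^t}$-invariance of the two sets and the description $\ker(\widetilde{A_1^t})_{n,\infty}=\bigcup_p\ker((\widetilde{A_1^t})^p)$ --- which the paper compresses into ``the result follows''; your justification there is correct and is precisely the argument needed.
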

\begin{proof}
\eqref{lem K of scewed graph algebra1} We need to set up some notation. As in
Proposition~\ref{prp:skew}, let \[B_n := \clsp\{s_{(\mu,p)} s^*_{(\nu,q)} : p+c(\mu) =
q+c(\nu) = n\}\subseteq C^*(\Lambda \times_c \ZZ)\] and $P_n:=\sum_{v \in \Lambda^0}
p_{(v,n)} \in M( C^*(\Lambda \times_c \ZZ))$. By Proposition~\ref{prp:skew}, $B_n
\subseteq B_{n+1}$ via the Cuntz--Krieger relation, $C^*(\Lambda\times_c\ZZ)
=\overline{\bigcup B_n}$, and  $P_n B_n P_n$ is a full corner in $B_n$. The inclusion
\[i_n:P_n B_n P_n\to B_n\] induces an isomorphism $K_0(i_n):K_0(P_n B_n P_n)\to K_0(B_n)$
by \cite[Proposition~1.2]{Paschke}. Also by Proposition~\ref{prp:skew}, there is an
isomorphism
\[\phi_n:P_n B_n P_n\to C^*(\Lambda^{\NN e_2})
\]
such that $\phi_n(s_{(\mu,n)})=s_\mu$.   Let
\[\phi:K_0(C^*(\Lambda^{\NN{e_2}}))\to\coker(1-A^t_2)\] be the isomorphism of
Lemma~\ref{lem:positive cone},  which carries $[p_v]$ to $\delta_v+\image(1-A_2^t)$.

Since $\Lambda$ is a $2$-graph, the matrices $A_1$ and $A_2$ commute, and so do $A_1^t$
and $A_2^t$. So $A_1^t (1 - A_2^t)\ZZ{\Lambda^0} = (1 - A_2^t) A_1^t \ZZ{\Lambda^0}
\subseteq (1 - A_2^t)\ZZ{\Lambda^0}$, and it follows that $A_1^t$ induces a homomorphism
\[\widetilde{A_1^t} : \coker(1 - A_2^t) \to \coker(1 - A_2^t).\]

 We now consider the composition
\[\phi\circ K_0(\phi_{n+1})\circ K_0(i_{n+1})^{-1}\circ K_0( i_n):K_0(P_nB_nP_n)\to \coker(1-A^t_2).\]
Tracing a generating element $[p_{(v,n)}]\in K_0(P_nB_nP_n)^+$ through the composition we
have:
\begin{align*}
\phi\circ K_0(\phi_{n+1})&\circ K_0(i_{n+1})^{-1}\circ K_0(i_n)([p_{(v,n)}])\\
&=\phi\circ K_0(\phi_{n+1})\circ K_0(i_{n+1})^{-1}([p_{(v,n)}])\\
&=\phi\circ K_0(\phi_{n+1})\circ K_0(i_{n+1})^{-1}\Big(\sum_{e\in v\Lambda^{e_1}} [s_{(e,n)}s_{(e,n)}^*]\Big)\\
&=\phi\circ K_0(\phi_{n+1})\circ K_0(i_{n+1})^{-1}\Big(\sum_{e\in v\Lambda^{e_1}} [p_{(s(e),n+1)}]\Big)\\
&=\phi\circ K_0(\phi_{n+1})\Big(\sum_{e\in v\Lambda^{e_1}} [p_{(s(e),n+1)}]\Big)\quad\text{because $P_{n+1}\geq p_{(s(e),n+1)}$}\\
&=\phi\Big(\sum_{e\in v\Lambda^{e_1}} [p_{s(e)}]\Big)=\sum_{e\in v\Lambda^{e_1}} \delta_{s(e)}+\image(1-A_2^t).
\end{align*}
On the other hand, tracing  $[p_{(v,n)}]$ through the composition
\[\widetilde{A_1^t}\circ \phi\circ K_0(\phi_{n}):K_0(P_nB_nP_n)\to \coker(1-A^t_2)\]
we get
\begin{align*}
\widetilde{A_1^t}\circ \phi\circ K_0(\phi_{n})([p_{(v,n)}])
&=\widetilde{A_1^t}\circ \phi([p_v])
=\widetilde{A_1^t}(\delta_v+\image(1-A_2^t))\\
&=A_1^t(\delta_v)+\image(1-A_2^t)
=\sum_{e\in v\Lambda^{e_1}} \delta_{s(e)}+\image(1-A_2^t)
\end{align*}
Thus
\[
\phi\circ K_0(\phi_{n+1})\circ K_0(i_{n+1})^{-1}\circ K_0( i_n)=\widetilde{A_1^t}\circ \phi\circ K_0(\phi_{n})
\]
and precomposing both sides of this equation with $K_0(i_n)^{-1}$ shows that the next
square commutes:
\[
\begin{tikzpicture}[xscale=1.5]
    \node (02) at (0,2) {\small$K_0(B_n)$};
    \node (00) at (0,0) {\small$\coker(1 - A_2^t)$};
    \node (22) at (2.3,2) {\small$K_0(B_{n+1})$};
    \node (20) at (2.3,0) {\small$\coker(1 - A_2^t)$};
     \draw[-stealth] (02)--(22) node[pos=0.5, above] {\small$\id$};
    \draw[-stealth] (00)--(20) node[pos=0.5, above] {\small$\widetilde{A_1^t}$};
\draw[-stealth] (02)--(00) node[pos=0.5, left] {\small$\phi\circ K_0(\phi_{n})\circ K_0(i_n)^{-1}$};
    \draw[-stealth] (22)--(20) node[pos=0.5, right] {\small$\phi\circ K_0(\phi_{n+1})\circ K_0(i_{n+1})^{-1}$};
\end{tikzpicture}
\]
By the universal property of the direct limit $\overline{\bigcup K_0(B_n)}$, there is a
unique homomorphism $\rho : \overline{\bigcup K_0(B_n)} \to \varinjlim(\coker(1 - A_2^t),
\widetilde{A_1^t})$ such that $\rho|_{K_0(B_n)} =
(\widetilde{A_1^t})_{n,\infty}\circ\phi\circ K_0(\phi_{n})\circ K_0(i_n)^{-1}$. Reversing
the roles of $\overline{\bigcup K_0(B_n)}$ and $\varinjlim(\coker(1 - A_2^t),
\widetilde{A_1^t})$ implies that $\rho$ is an isomorphism.

We have  $\rho([p_{(v,n)}]=(\widetilde{A_1^t})_{n}^\infty (\delta_v + \image(1 -
A_2^t))$. So
\begin{align*}
\rho(K_0(\varinjlim B_n)^+)
    &= \bigcup \rho(K_0(B_n))^+\\
    &= \bigcup (\widetilde{A_1^t})_{n,\infty}\circ\phi\circ K_0(\phi_{n})\circ K_0(i_n)^{-1}(K_0(B_n)^+)\\
    &= \bigcup (\widetilde{A_1^t})_{n,\infty}(\NN{\Lambda^0} + \image(1 - A_2^t))
\end{align*}
by Lemma~\ref{lem:positive cone}.

\eqref{lem K of scewed graph algebra2}  The only arrow in question is the right-most down
arrow. By the universal property of $\varinjlim(\coker((1 - A_2)^t), \widetilde{A_1^t})$,
there exists a unique homomorphism \[\Upsilon: \varinjlim(\coker((1 - A_2)^t),
\widetilde{A_1^t})\to \varinjlim(\coker((1 - A_2)^t), \widetilde{A_1^t})\] such that
\[
\Upsilon\circ (\widetilde{A_1^t})_{n, \infty} =(\widetilde{A_1^t})_{n, \infty}\circ \widetilde{A_1^t}
    \quad\text{ for all $n$.}
\]
We claim that $\Upsilon=\rho\circ K_0(\alpha)\circ\rho^{-1}$. We have
\begin{gather*}
\alpha(p_{(v,n)})=\alpha\big(\sum_{e \in v\Lambda^{e_1}}s_{(e,n)}
s^*_{(e,n)}\big)=\sum_{e \in v\Lambda^{e_1}}s_{(e,n-1)} s^*_{(e,n-1)}
\end{gather*}
and hence
\[
K_0(\alpha)([p_{(v,n)}])
    = \sum_{e \in v\Lambda^{e_1}} [p_{(s(e, n-1)}]
     = \sum_{e \in v\Lambda^{e_1}} [p_{(s(e), n)}]
    = \sum_{w \in \Lambda^0} A_1^t(w,v) [p_{(w,n)}].
\]
Thus
\begin{align*}
\rho\circ K_0(\alpha)\circ\rho^{-1}\circ (\widetilde{A_1^t})_{n, \infty}(\delta_v +\image(1-A_2^t))
    &= \rho\circ K_0(\alpha)([p_{(v,n)}])\\
    &= \sum_{w\in\Lambda^0} A_i^t(w,v)\rho([p_{(w,n)}])\\
    &= \sum_{w\in\Lambda^0} A_i^t(w,v)(\widetilde{A_i^t})_{n,\infty}(\delta_w+\image(1-A_2^t))\\
    &= (\widetilde{A_i^t})_{n,\infty}\circ\widetilde{A_i^t}(\delta_v+\image(1-A_2^t)).
\end{align*}
Since the $\delta_v+\image(1-A_2^t)$ generate the positive cone of $\coker(1-A_2^t)$,
this shows that $\Upsilon=\rho\circ K_0(\alpha)\circ\rho^{-1}$ as claimed.

\eqref{lem K of scewed graph algebra3}  From~\eqref{eq:positive cone image} we have
\[\textstyle
\rho(K_0(C^*(\Lambda \times_c \ZZ))^+)
    = \bigcup_n (\widetilde{A_1^t})_{n,\infty}(\NN{\Lambda^0} + \image(1 - A_2^t)).
\]
Since $\ZZ$ is generated by $1$,  Lemma~\ref{it:helper} gives
$H_\alpha=\image(\id-K_0(\alpha))$. We have
\begin{align*}
(\widetilde{A_1^t})_{n, \infty} \big(\image(1-A_1^t) + \image(1 - A_2^t)  \big)
    &= (\widetilde{A_1^t})_{n, \infty} \big((1-\widetilde{A_1^t})\coker(1-A_2^t)\big)\\
    &= \rho\circ(\id- K_0(\alpha))\circ\rho^{-1}\circ (\widetilde{A_1^t})_{n, \infty}(\coker(1-A_2^t))\\
    &= \rho\circ (\id- K_0(\alpha))(K_0(B_n)).
\end{align*}
Thus
\begin{align*}
\rho(H_\alpha)
    &=\rho(\image(1-K_0(\alpha))) \\
    &\textstyle= \bigcup_n \rho\circ (\id- K_0(\alpha))(K_0(B_n))
     = \bigcup_n(\widetilde{A_1^t})^\infty_n \big(\image(1-A_1^t) + \image(1 - A_2^t)  \big),
\end{align*}
and the result follows.
\end{proof}

\begin{lem}\label{Halpha and the red condition}
Let $\Lambda$ be a row-finite  $2$-graph with no sources  and let $A_i$ be the coordinate
matrices of $\Lambda$.  Define $c : \Lambda \to \NN$ by $c(\lambda) = d(\lambda)_1$. Let
$\alpha$ be the automorphism of $B:=C^*(\Lambda \times_c \ZZ)$ of~\eqref{eq:alpha and
morita} and let $H_\alpha$ be the subgroup of $K_0(B)$ generated by
\[
    \{(\id -K_0(\alpha_g))(b) : g \in \ZZ, b \in K_0(B)\}.
\]
Suppose that $\Lambda$ has no red cycles. Then the following are equivalent:
\begin{enumerate}
\item\label{Halpha and the red condition1} $H_\alpha\cap K_0(B)^+=\{0\}$;
\item\label{Halpha and the red condition2} %\label{condition-red-again}
$\big(\image(1-A^t_1) + \image(1-A^t_2)\big) \cap \big(\NN{\Lambda^0}
    +\image(1-A^t_2)\big) = \{0_{\coker(1-A^t_2)}\}$;
\item\label{Halpha and the red condition3} $\big(\image(1-A^t_1) +
    \image(1-A^t_2)\big) \cap \NN{\Lambda^0} = \{0\}$.
\end{enumerate}
\end{lem}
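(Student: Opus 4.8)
The plan is to read the result off Lemma~\ref{lem K of scewed graph algebra}(\ref{lem K of scewed graph algebra3}), which already identifies $\rho\big(H_\alpha\cap K_0(B)^+\big)$ with $\bigcup_n (\widetilde{A_1^t})_{n,\infty}(S)$, where
\[
S := \big(\image(1-A^t_1) + \image(1-A^t_2)\big) \cap \big(\NN{\Lambda^0}+\image(1-A^t_2)\big) \subseteq \coker(1-A^t_2)
\]
is exactly the set appearing in~(\ref{Halpha and the red condition2}), and where $\rho : K_0(B) \to \varinjlim(\coker(1-A_2^t), \widetilde{A_1^t})$ is the isomorphism of Lemma~\ref{lem K of scewed graph algebra}(\ref{lem K of scewed graph algebra1}). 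The one external ingredient needed is that $\image(1 - A_2^t) \cap \NN{\Lambda^0} = \{0\}$; I would record this first, observing that since $\Lambda$ has no red cycles, the red subgraph $\Lambda^{\NN e_2} = (\Lambda^0, \Lambda^{e_2}, r, s)$ is a row-finite directed graph with no sources and no cycles, so Lemma~\ref{lem:no positives} — in the trivial implication~(\ref{it:1graphcharCycles})${\Rightarrow}$(\ref{it:1graphcharMX}), applied to $\Lambda^{\NN e_2}$, whose vertex matrix is $A_2$ — yields the claim.

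Granting this, the implication~(\ref{Halpha and the red condition2})${\Rightarrow}$(\ref{Halpha and the red condition1}) is immediate: if $S = \{0\}$ then $\bigcup_n (\widetilde{A_1^t})_{n,\infty}(S) = \{0\}$, and since $\rho$ is an isomorphism, $H_\alpha \cap K_0(B)^+ = \{0\}$. For~(\ref{Halpha and the red condition1})${\Rightarrow}$(\ref{Halpha and the red condition2}) I would prove the contrapositive. Fix a nonzero $z \in S$; since $z \in \NN{\Lambda^0} + \image(1-A_2^t)$ I can write $z = q(c)$ with $c \in \NN{\Lambda^0}$, where $q : \ZZ{\Lambda^0} \to \coker(1-A_2^t)$ is the quotient map, and $c \neq 0$ since $z \neq 0$. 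The key claim is that $(\widetilde{A_1^t})_{0,\infty}(z) \neq 0$: if it were zero, then $(\widetilde{A_1^t})^N z = q\big((A_1^t)^N c\big) = 0$ for some $N$, that is $(A_1^t)^N c \in \image(1-A_2^t)$; but $A_1^t$ is a nonnegative integer matrix with no zero columns (because $\Lambda$ has no sources), so $(A_1^t)^N c \in \NN{\Lambda^0}\setminus\{0\}$ for every $N$, contradicting $\image(1-A_2^t)\cap\NN{\Lambda^0} = \{0\}$. Hence $(\widetilde{A_1^t})_{0,\infty}(z)$ is a nonzero element of $\rho\big(H_\alpha\cap K_0(B)^+\big)$, so $H_\alpha\cap K_0(B)^+ \neq \{0\}$.

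Finally~(\ref{Halpha and the red condition2})${\iff}$(\ref{Halpha and the red condition3}) is a routine translation along $q$. If~(\ref{Halpha and the red condition3}) holds and $z \in S$, write $z = q\big((1-A_1^t)a\big)$ and $z = q(c)$ with $c \in \NN{\Lambda^0}$; then $c - (1-A_1^t)a \in \image(1-A_2^t)$, so $c$ lies in $\big(\image(1-A^t_1) + \image(1-A^t_2)\big) \cap \NN{\Lambda^0} = \{0\}$ and thus $z = 0$. Conversely, if~(\ref{Halpha and the red condition2}) holds and $c \in \big(\image(1-A^t_1) + \image(1-A^t_2)\big) \cap \NN{\Lambda^0}$, then $q(c) \in S = \{0\}$, so $c \in \image(1-A_2^t)\cap\NN{\Lambda^0} = \{0\}$.

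I expect the only genuine subtlety — the main obstacle — to be the direct-limit step in~(\ref{Halpha and the red condition1})${\Rightarrow}$(\ref{Halpha and the red condition2}): because the canonical maps $(\widetilde{A_1^t})_{n,\infty}$ need not be injective, one cannot simply deduce that a nonzero element of $S$ has nonzero image in the limit, and it is exactly here that the no-sources hypothesis (guaranteeing that $A_1^t$ has no zero columns, hence carries $\NN{\Lambda^0}\setminus\{0\}$ into itself) together with $\image(1-A_2^t)\cap\NN{\Lambda^0}=\{0\}$ enters essentially. Everything else is bookkeeping with cokernels and the quotient map $q$.
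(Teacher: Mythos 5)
Your proposal is correct and follows essentially the same route as the paper's proof: it uses the identification of $\rho\big(H_\alpha\cap K_0(B)^+\big)$ from Lemma~\ref{lem K of scewed graph algebra}(\ref{lem K of scewed graph algebra3}), invokes Lemma~\ref{lem:no positives} applied to the red subgraph to get $\image(1-A_2^t)\cap\NN\Lambda^0=\{0\}$, and handles the non-injectivity of the maps $(\widetilde{A_1^t})_{n,\infty}$ by the same positivity argument (no sources implies $A_1^t$ has no zero columns), with the equivalence of (\ref{Halpha and the red condition2}) and (\ref{Halpha and the red condition3}) reduced to the same cokernel bookkeeping. The only cosmetic difference is that you push the positive representative $c$ through $(A_1^t)^N$ directly rather than $(1-A_1^t)x$ as the paper does, which is an equivalent computation.
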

\begin{proof}
We prove first that~\eqref{Halpha and the red condition1}  and~\eqref{Halpha and the red
condition2} are equivalent. Let $\rho$ be the isomorphism of Lemma~\ref{lem K of scewed
graph algebra}. By~\eqref{eq H alpha in action},
\[
\rho\big(H_\alpha\cap K_0(B)\big)
    =\bigcup_n (\widetilde{A_1^t})_{n, \infty}\Big(\big(\image(1-A_1^t) + \image(1 - A_2^t)  \big)
        \cap \big(\NN{\Lambda^0}+\image(1 - A_2^t)\big)\Big).
\]
So~\eqref{Halpha and the red condition2} immediately implies $\rho\big(H_\alpha\cap
K_0(B)^+\big)=\{0\}$, and then~\eqref{Halpha and the red condition1} follows.

Now assume~\eqref{Halpha and the red condition1}. Aiming for a contradiction, we suppose
that~\eqref{Halpha and the red condition2} fails. Then there exist $x,
y\in\ZZ{\Lambda^0}$ and $0\neq c\in \NN{\Lambda^0}$ such that $(1-A_1^t)x+(1-A_2^t)y=c$.

Since  $\rho(H_\alpha\cap K_0(B)^+)=\{0\}$, for every $n\geq 0$ we have
$(1-A_1^t)x+\image(1-A_2^t)\in \ker(\widetilde{A_1^t})_{n, \infty}$. By
\cite[Proposition~6.2.5]{RLL},
\[\textstyle
\ker(\widetilde{A^t_1})_{n, \infty} = \bigcup_{m\geq n} \ker (\widetilde{A_1^{t}}^m).
\]
So for every $n$ there exists $m\geq n$ such that  $(1-A_1^t)x+\image(1-A_2^t)\in
\ker\widetilde{A_1^{t}}^m$, that is $(A_1^t)^{m}(1-A_1^t)x\in \image(1 - A^t_2)$.

In particular, there exists $m\geq 0$ such that $(A_1^t)^{m}(1-A_1^t)x=(1-A_2^t)z$ for
some $z\in \ZZ{\Lambda^0}$. Now
\[
(A_1^t)^{m}c=(1-A_2^t)\big(z +(A_1^t)^{m}y\big)\in\NN{\Lambda^0}.
\]
Since $\Lambda$ has no sources, neither does  $\Lambda^{\NN e_1}=(\Lambda^0,
\Lambda^{e_1}, r,s)$. So $c>0$ implies $(A_1^t)^{m}c>0$. But now $0\neq (1-A_2^t)\big(z
+(A_1^t)^{m}y\big)\in\NN{\Lambda^0}$. This contradicts
(\ref{it:1graphcharCycles})${\implies}$(\ref{it:1graphcharMX}) in Lemma~\ref{lem:no
positives} applied to the directed graph $\Lambda^{\NN e_2}$, which has no cycles because
$\Lambda$ has  no red cycles. Thus~\eqref{Halpha and the red condition1}
implies~\eqref{Halpha and the red condition2}.

Next, assume~\eqref{Halpha and the red condition2}.  Fix $c\in \big(\image(1-A^t_1) +
\image(1-A^t_2)\big) \cap \NN{\Lambda^0}$, say $c=(1-A^t_1) x+(1-A^t_2) y$. Then
$c+\image(1-A^t_2)=(1-A^t_1)x+\image(1-A^t_2)=0_{\coker(1-A^t_2)}$ using~\eqref{Halpha
and the red condition2}. Now $c\in \image(1-A^t_2)\cap \NN{\Lambda^0}$.  But $\Lambda$
has no red cycles, so (\ref{it:1graphcharCycles})${\implies}$(\ref{it:1graphcharMX}) of
Lemma~\ref{lem:no positives} applied to the directed graph $\Lambda^{\NN e_2}$  gives
$\image(1-A^t_2)\cap \NN{\Lambda^0}=\{0\}$. Thus $c=0$, giving~\eqref{Halpha and the red
condition3}.

Finally, assume~\eqref{Halpha and the red condition3}. Fix  $n\in\NN{\Lambda^0}$ and
assume that
\[
n+\image(1-A^t_2) \in \big(\image(1-A^t_1) + \image(1-A^t_2)\big) \cap
    \big(\NN{\Lambda^0} +\image(1-A^t_2)\big).
\]
Then there exist $x, y\in\ZZ{\Lambda^0}$ such that $n-(1-A_1^t)x=(1-A_2^t)y$. But now $n=
(1-A_1^t)x+(1-A_2^t)y=0$ by~\eqref{Halpha and the red condition3}. Thus
$n+\image(1-A^t_2)=0_{\coker(1-A^t_2)}$. This gives~\eqref{Halpha and the red
condition2}.
\end{proof}

\begin{proof}[Proof of Theorem~\ref{thm:1stchar2}]
By symmetry, it suffices to prove the result when $\Lambda$ has no red cycles. Let
$\alpha$ be the automorphism of $B:=C^*(\Lambda\times_c\ZZ)$ described at~\eqref{eq:alpha
and morita}. Then $B\times_\alpha\ZZ$ and $C^*(\Lambda)$ are  stably isomorphic by
\cite[Theorem~5.7]{KP}. Since $\Lambda$ has no red cycles, $B$ is an AF algebra by
Proposition~\ref{prp:skew}. Thus Theorem~0.2 of \cite{Brown:JFA98} yields equivalence of
the following: $B\times_\alpha\ZZ$ is AF-embeddable, $B\times_\alpha\ZZ$ is
quasidiagonal, $B\times_\alpha\ZZ$  is stably finite, and $H_\alpha \cap K_0(B)^+ =
\{0\}$. Now Lemma~\ref{lem:basics} gives the equivalence of $\mbox{(\ref{it:1stchar2i})},
\mbox{(\ref{it:1stchar2ii})}, \mbox{(\ref{it:1stchar2iii})}$ and $H_\alpha \cap K_0(B)^+
= \{0\}$. Finally, Lemma~\ref{Halpha and the red condition} gives the equivalence of
$H_\alpha \cap K_0(B)^+ = \{0\}$ and~\eqref{it:1stchar2iv}.
\end{proof}

\section{Cofinal $2$-graphs}\label{sec:cofinal2graphs}

In this section, we consider the structure of the $C^*$-algebra of a cofinal $2$-graph
that contains both blue and red cycles, none of which have entrances. We use cofinality
to establish that $C^*(\Lambda)$ is stably isomorphic to $C(\TT^2)$, and hence
AF-embeddable. This will be the final case in our proof of
Theorem~\ref{thm-main}(\ref{thm-main-item2}).

\begin{prp}\label{prp both cycles no entries}
Let $\Lambda$ be a row-finite, cofinal $2$-graph with no sources. Suppose that $\Lambda$
contains a blue cycle with no blue entrance and a red cycle with no red entrance. Then
there exist a vertex $v$, a blue cycle $\zeta \in v\Lambda v$ and a red cycle $\xi \in
v\Lambda v$ such that $v\Lambda^\infty = \{(\zeta\xi)^\infty\}$. We have $p_v
C^*(\Lambda) p_v \cong C(\TT^2)$, the projection $p_v$ is full, and $C^*(\Lambda)$ is
stably isomorphic to $C(\TT^2)$. In particular $\Lambda$ is not aperiodic, and
$C^*(\Lambda)$ is AF-embeddable and nonsimple.
\end{prp}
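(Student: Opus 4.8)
The plan is to prove the structural statement first --- that there are a vertex $v$, a blue cycle $\zeta\in v\Lambda v$ and a red cycle $\xi\in v\Lambda v$ with $v\Lambda^\infty=\{(\zeta\xi)^\infty\}$ --- and then to read off the remaining assertions with essentially no extra work.

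The combinatorial input is this. Let $W\subseteq\Lambda^0$ be the vertex set of the given blue cycle $\zeta_0$ and $U$ that of the given red cycle $\xi_0$. No blue entrance means every $w\in W$ emits exactly one blue edge, so $w\Lambda^{pe_1}$ is a singleton for all $p$. Now count the degree-$(1,N)$ paths out of a vertex $w\in W$ using the factorisation property both ways: once as (blue edge out of $w$)(red path of degree $Ne_2$) and once as (red path of degree $Ne_2$)(blue edge). This gives $|w^+\Lambda^{Ne_2}|=\sum_{\gamma\in w\Lambda^{Ne_2}}|s(\gamma)\Lambda^{e_1}|$, where $w^+$ is the next vertex of $\zeta_0$; chaining this around $\zeta_0$ forces all the $|w^+\Lambda^{Ne_2}|$ equal and hence $|s(\gamma)\Lambda^{e_1}|=1$ for every $\gamma\in w\Lambda^{Ne_2}$, $w\in W$. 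So if $W':=\bigcup_{w\in W,\,N\ge 0}s(w\Lambda^{Ne_2})$, then every vertex of $W'$ emits a unique blue edge, and refactoring (red path from $W$)(blue edge) shows $W'$ is hereditary. Symmetrically, $U':=\bigcup_{u\in U,\,M\ge 0}s(u\Lambda^{Me_1})$ is hereditary and each of its vertices emits a unique red edge.

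Next I would bring in cofinality. Using an infinite path with range in $W$ and the infinite-path characterisation of cofinality, every vertex of $\Lambda$ reaches $W'$; likewise every vertex reaches $U'$. Going from a point of $W'$ into $U'$, and staying in $W'$ by heredity, gives a vertex $v_\star\in W'\cap U'$. Since $W'\cap U'$ is hereditary, every vertex downstream of $v_\star$ emits a unique blue and a unique red edge, so by induction along the factorisation property $v_\star\Lambda^n$ is a singleton for all $n\in\NN^2$; hence $v_\star\Lambda^\infty$ is a single point. The crux is to promote $v_\star$ to a vertex lying on cycles of both colours. Refactoring (red path from $W$)(blue path from $v_\star$) shows the blue ray from $v_\star$ is trapped in a finite set of the form $\bigcup_{w\in W}s(w\Lambda^{Ne_2})$, so it is eventually periodic and lands on a blue cycle; pick $v_1\in W'\cap U'$ on it, with $\zeta_1\in v_1\Lambda^{a'e_1}v_1$. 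For each $q$, refactoring $\zeta_1\rho_q$ (where $\rho_q$ is the unique red path of degree $qe_2$ from $v_1$) as (red)(blue) forces the unique blue path of degree $a'e_1$ out of $s(\rho_q)$ to return to $s(\rho_q)$; thus every vertex on the red ray from $v_1$ lies on a blue cycle with no blue entrance. By the same finiteness argument (now with $\xi_0$, using $v_1\in U'$) this red ray is eventually periodic, so it lands on a red cycle with no red entrance; a vertex $v$ of that red cycle then lies on a blue cycle $\zeta$ and a red cycle $\xi$ based at $v$, and $v\in W'\cap U'$, so $v\Lambda^\infty$ is a single point, necessarily $\{(\zeta\xi)^\infty\}$. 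Engineering the two ``no-entrance'' hypotheses to hold simultaneously at a single vertex is the step I expect to be the main obstacle.

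The rest is quick. Since $v\Lambda^n$ is a singleton for every $n$, relation~(CK4) gives $s_\zeta s_\zeta^*=p_v=s_\xi s_\xi^*$ and $s_\zeta^*s_\zeta=p_v=s_\xi^*s_\xi$, while $s_\zeta s_\xi=s_{\zeta\xi}=s_{\xi\zeta}=s_\xi s_\zeta$; so $s_\zeta,s_\xi$ are commuting unitaries generating $p_vC^*(\Lambda)p_v$, giving a surjection $C(\TT^2)\to p_vC^*(\Lambda)p_v$. It is injective because the gauge action of $\TT^2$ restricts to $p_vC^*(\Lambda)p_v$ and pulls back to an action of $\TT^2$ on $C(\TT^2)$ that is minimal, so its only invariant ideals are $0$ and $C(\TT^2)$. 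Cofinality makes $p_v$ full \cite[Proposition~3.5]{RobSi}, so $C^*(\Lambda)$ is Morita equivalent, hence stably isomorphic \cite{BGR}, to $p_vC^*(\Lambda)p_v\cong C(\TT^2)$. The infinite path $(\zeta\xi)^\infty$ is periodic, so $\Lambda$ is not aperiodic, and since $C^*(\Lambda)$ is stably isomorphic to the non-simple algebra $C(\TT^2)$ it is itself non-simple. Finally $C(\TT)$ is the $C^*$-algebra of the $1$-graph with one vertex and one loop --- a cycle with no entrance --- so $C(\TT)$ is AF-embeddable by Lemma~\ref{lem:no positives}; then $C(\TT^2)=C(\TT)\otimes C(\TT)$ embeds in a tensor product of AF algebras, which is again AF, and Lemma~\ref{lem:basics} gives that $C^*(\Lambda)$ is AF-embeddable.
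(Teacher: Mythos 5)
Your combinatorial construction of $v$, $\zeta$ and $\xi$ is correct, and it is a genuinely more self-contained route than the paper's (which phrases ``no entrance'' in terms of $\MCE$ and invokes Lemma~5.6 of \cite{EvansSims:JFA2012}); the counting argument around the cycle and the hereditary sets $W'$, $U'$ do that job from scratch. The endgame (fullness of $p_v$, non-aperiodicity, nonsimplicity, and AF-embeddability of $C(\TT^2)$ via $C(\TT)\otimes C(\TT)$) is also fine. The gap is in the identification $p_v C^*(\Lambda)p_v\cong C(\TT^2)$: the claim that $s_\zeta$ and $s_\xi$ \emph{generate} the corner is false in general. The corner is the closed span of \emph{all} products $s_\mu s_\nu^*$ with $\mu,\nu\in v\Lambda$ and $s(\mu)=s(\nu)$, and such pairs can have $d(\mu)-d(\nu)$ outside the subgroup $\langle d(\zeta),d(\xi)\rangle$ of $\ZZ^2$. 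Concretely, take $\Lambda^0=\{v,w\}$ with blue edges $f_v\in v\Lambda^{e_1}w$, $f_w\in w\Lambda^{e_1}v$, red edges $g_v\in v\Lambda^{e_2}w$, $g_w\in w\Lambda^{e_2}v$, and factorisation rules $f_vg_w=g_vf_w$, $f_wg_v=g_wf_v$. This is a row-finite, cofinal $2$-graph with no sources whose blue and red subgraphs are entrance-free $2$-cycles, so your construction yields $\zeta=f_vf_w$ and $\xi=g_vg_w$ with $d(\zeta)=2e_1$ and $d(\xi)=2e_2$. But $s_{f_v}s_{g_v}^*$ lies in $p_vC^*(\Lambda)p_v$ because $s(f_v)=s(g_v)=w$, and in the representation used in the paper's proof it is $\theta_{h_y,h_y}\otimes\lt_{e_1-e_2}$, whereas $C^*(s_\zeta,s_\xi)$ is carried onto $\theta_{h_y,h_y}\otimes\clsp\{\lt_h: h\in 2\ZZ\times 2\ZZ\}$ and $e_1-e_2\notin 2\ZZ\times 2\ZZ$. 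So $C^*(s_\zeta,s_\xi)$ is a proper subalgebra of the corner, and your surjection-plus-minimality argument only identifies that subalgebra with $C(\TT^2)$.

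The conclusion survives because the corner is $C^*(H)$ for the subgroup $H\le\ZZ^2$ generated by \emph{all} differences $d(\mu)-d(\nu)$, and any rank-$2$ subgroup of $\ZZ^2$ is isomorphic to $\ZZ^2$; but your argument must account for all of $H$, not just $\langle d(\zeta),d(\xi)\rangle$. A repair close to your style: using that each $v\Lambda^n$ is a singleton, note $s_\mu s_\nu^*=s_{\mu\alpha}s_{\nu\alpha}^*$ for the unique $\alpha\in s(\mu)\Lambda^p$, deduce that $s_\mu s_\nu^*$ depends only on $d(\mu)-d(\nu)$, and check that these elements form a unitary representation $h\mapsto u_h$ of $H$ whose span is dense in the corner; then run your gauge-action/minimality argument on the resulting surjection of $C^*(H)\cong C(\TT^2)$ onto $p_vC^*(\Lambda)p_v$. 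This is in substance what the paper does via the infinite-path representation tensored with the left regular representation of $\ZZ^2$.
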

\begin{proof}
Let $\mu$ be a blue cycle with no blue entrance and $\nu$ a red cycle with no red
entrance. We claim that the cycle $\mu$ has no entrance in the sense of
\cite{EvansSims:JFA2012}: that is, that for every $\eta \in r(\mu)\Lambda$ we have
$\MCE(\mu,\eta) \not= \emptyset$. To see this, fix $\eta \in r(\mu)\Lambda$. Since
$\Lambda$ has no sources, there exists $\beta \in s(\eta)\Lambda^{d(\mu)}$. Factor
$\eta\beta = \beta'\eta'$ where $d(\beta') = d(\beta) = d(\mu)$. Since
$r(\mu_i)\Lambda^{e_1} = \{\mu_i\}$ for each $i$, we have $\beta' = \mu$ and then
$\eta\beta$ is a common extension of $\mu$ and $\eta$. In particular $\MCE(\mu,\eta)
\not= \emptyset$ as required. Similarly, $\nu$ has no entrance.

Now fix an infinite path $x$ in $\Lambda$. Since $\Lambda$ is cofinal, there exists $n
\in \NN^2$ such that $r(\mu)\Lambda x(n)$ and $r(\nu)\Lambda x(n)$ are nonempty. Fix
$\lambda \in r(\mu)\Lambda x(n)$. Factorise $\lambda = \lambda_b\lambda_r$ where
$\lambda_r \in \Lambda^{\NN e_2}$ and $\lambda_b \in \Lambda^{\NN e_1}$. Since each
$r(\mu_i)\Lambda^{e_1} = \{\mu_i\}$, we have $\lambda_b = (\mu^\infty)(0, d(\lambda_b))$;
so replacing $\mu$ with $(\mu^\infty)(d(\lambda_b), d(\lambda_b) + d(\mu))$, we may
assume that $\lambda \in \Lambda^{\NN e_2}$. Since $\mu$ has no entrance, and since
$d(\lambda_r) \wedge d(\mu) = 0$, Lemma~5.6 of \cite{EvansSims:JFA2012} shows that each
$s(\lambda_r) \Lambda^{p e_1}$ is a singleton, and that there exists $p > 0$ such that
the unique element $\zeta_0$ of $s(\lambda_r)\Lambda^{p e_1}$ is a cycle. We have $kp \ge
d(\lambda_b)$ for some $k$ and then since $\lambda_b \in
s(\lambda_r)\Lambda^{|\lambda_b|e_1}$, we deduce that $\lambda_b$ is an initial segment
of $\zeta_0^\infty$. It follows that $(\zeta_0^\infty)(d(\lambda_b) , d(\lambda_b) +
pe_1)$ is a blue cycle with no entrance whose range is $x(n)$. Let $\zeta$ be the
shortest nontrivial blue cycle with no entrance such that $r(\zeta) = x(n)$. Applying the
same reasoning with the colours reversed shows that there is also a shortest red cycle
$\xi$ with no entrance such that $r(\xi) = x(n)$.

Let $v := x(n)$. We claim that $v\Lambda^\infty = \{(\zeta\xi)^\infty\}$. Since $(l\cdot
d(\zeta\xi))^\infty_{l=1}$ is a cofinal sequence in $\NN^2$, Remarks~2.2 of \cite{KP}
implies that we just need to show that each $v\Lambda^{l\cdot d(\zeta\xi)} =
\{(\zeta\xi)^l\}$. We argue by induction. Our base case is $l = 1$. Fix $\eta \in
v\Lambda^{d(\zeta\xi)}$. Express $\eta = \eta_b \eta_r$ where $d(\eta_b) = d(\zeta)$ and
$d(\eta_r) = d(\xi)$. Since $\zeta$ has no entrance, we have $r(\zeta)\Lambda^{d(\zeta)}
= \{\zeta\}$, so that $\eta_b = \zeta$. Now $r(\eta_r) = s(\zeta) = r(\xi)$, and since
$\xi$ has no entrance, we deduce that $\eta_r = \xi$. So $\eta = \eta_b\eta_r =
\zeta\xi$. Now for the inductive hypothesis, suppose that $v\Lambda^{l\cdot d(\zeta\xi)}
= \{(\zeta\xi)^l\}$. Fix $\alpha \in v\Lambda^{(l+1)\cdot d(\zeta\xi)}$. Factorise
$\alpha = \alpha'\alpha''$ with $d(\alpha') = d(\zeta\xi)$. Since $r(\alpha') = v$, the
base case gives $\alpha' = \zeta\xi$. Hence $r(\alpha'') = s(\xi) = r(\xi) = v$, and the
inductive hypothesis gives $\alpha'' = (\zeta\xi)^l$. Hence $\alpha = \alpha'\alpha'' =
(\zeta\xi)(\zeta\xi)^l = (\zeta\xi)^{l+1}$. So $v\Lambda^{(l+1)\cdot d(\zeta\xi)} =
\{(\zeta\xi)^{l+1}\}$.

Let $y = \sigma^n(x)$ so that $v\Lambda^\infty = \{y\}$. Since $v\Lambda^\infty = \{y\}$,
and since $\sigma^{d(\xi)}(y) \in v\Lambda^\infty$, we see that $\sigma^{d(\xi)}(y) = y$,
and $\sigma^{d(\zeta)}(y) = y$ by the same reasoning. We also see that $\Lambda$ is not
aperiodic since $\sigma^{d(\xi)}(z) = z$ for all $z \in v\Lambda^\infty = \{y\}$.

Let $\{S_\lambda\} \subseteq \Bb(\ell^2(\Lambda^\infty))$ be the partial isometries
defining the infinite-path space representation of $C^*(\Lambda)$, and let $\lt : \ZZ^2
\to \Bb(\ell^2(\ZZ^2))$ be the left-regular representation. By
\cite[Theorem~4.7.6]{ASPhD}, there is a faithful representation $\pi$ of $C^*(\Lambda)$
on $\ell^2(\Lambda^\infty) \otimes \ell^2(\ZZ^2)$ such that $\pi(s_\lambda) = S_\lambda
\otimes \lt_{d(\lambda)}$ for all $\lambda$. We have $p_v C^*(\Lambda) p_v = \clsp\{s_\mu
s^*_\nu : r(\mu) = r(\nu) = v\}$. Fix a spanning element $s_\mu s^*_\nu$ of $p_v
C^*(\Lambda) p_v$. Then
\[
\pi(s_\mu s^*_\nu)(h_z \otimes h_m)
    = \begin{cases}
        h_{\mu\sigma^{d(\nu)}(z)} \otimes h_{m - d(\mu) + d(\nu)} &\text{ if $z(0, d(\nu)) = \nu$}\\
        0 &\text{ otherwise.}\\
        \end{cases}
\]
Since $v\Lambda^\infty = \{y\}$, we have $v\Lambda^{d(\nu)} = \{y(0,d(\nu))\}$, and so
$z(0, d(\nu)) = \nu$ if and only if $z = y$, and then $\mu\sigma^{d(\nu)}(z) =
\mu\sigma^{d(\nu)}(y) \in v\Lambda^\infty = \{y\}$. Writing $\theta_{h_y, h_y}$ for the
rank-1 projection onto $\CC h_y \in \ell^2(\Lambda^\infty)$, we obtain
\[
\pi(s_\mu s^*_\nu) (h_z \otimes h_m)
    = \delta_{z,y} (h_y \otimes h_{m + (d(\mu) - d(\nu))})
    = \big(\theta_{h_y, h_y} \otimes \lt_{d(\mu) - d(\nu)}\big)(h_z \otimes h_m).
\]
So each $\pi(s_\mu s^*_\nu) = \theta_{h_y, h_y} \otimes \lt_{d(\mu) - d(\nu)}$. Since
$\pi$ is faithful, we deduce that $p_v C^*(\Lambda) p_v$ is isomorphic to
$C^*(\{\lt_{d(\mu) - d(\nu)} : \mu,\nu \in v\Lambda, s(\mu) = s(\nu)\})$, and hence to
the $C^*$-algebra of the subgroup $H$ of $\ZZ^2$ generated by the elements $\{d(\mu) -
d(\nu) : \mu,\nu \in v\Lambda, s(\mu) = s(\nu)\}$. In particular, $d(\xi) = d(\xi) -
d(v)$ and $d(\eta) = d(\eta) - d(v)$ both belong to $H$, and since $d(\xi) \in \NN e_1$
and $d(\eta) \in \NN e_2$, we see that the rank of $H$ is 2, and so $H \cong \ZZ^2$.
Hence $p_v C^*(\Lambda) p_v \cong C(\TT^2)$.

Since $\Lambda$ is cofinal, Proposition~3.4 of \cite{RobSi} shows that $p_v$ is full, and
then $C^*(\Lambda)$ is stably isomorphic to $p_v C^*(\Lambda) p_v$ by \cite{BGR}. It
follows immediately that $C^*(\Lambda)$ is not simple. There is a continuous surjection
of the Cantor set $2^\omega$ onto $\TT^2$ (see, for example, \cite[page~166]{Kelley}) and
hence an embedding of $C(\TT^2)$ into $C(2^\omega)$. Since $C(2^\omega)$ is AF
\cite[page~77]{Davidson}, it follows that $C(\TT^2)$ is AF-embeddable. Thus
$C^*(\Lambda)$ is AF-embeddable by Lemma~\ref{lem:basics}.
\end{proof}

The final observation that we need to complete the proof of our main theorem is that if a
$k$-graph contains a cycle with an entrance in any of its coordinate subgraphs, then its
$C^*$-algebra is not stably finite.

\begin{lem}\label{lem infinite projn}
Let $\Lambda$ be a row-finite $k$-graph with no sources, and fix $j \le k$. If there
exists a cycle with an entrance in the $j$th coordinate graph of $\Lambda$, then
$C^*(\Lambda)$ is not stably finite.
\end{lem}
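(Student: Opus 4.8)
The plan is to avoid constructing an infinite projection by hand and instead reduce to the matrix criterion already in hand. Write $E_j := \Lambda^{\NN e_j}$ for the $j$th coordinate graph of $\Lambda$. Since $\Lambda$ is row-finite with no sources, $E_j = (\Lambda^0, \Lambda^{e_j}, r, s)$ is a row-finite directed graph with no sources, and its vertex matrix is precisely $A_j$. So everything reduces to showing that $\big(\sum_{i=1}^k \image(1 - A_i^t)\big) \cap \NN{\Lambda^0} \not= \{0\}$, for then Theorem~\ref{thm-for-main}(\ref{prp-for-main2})---whose hypotheses (row-finite, no sources; no cofinality needed) hold---gives that $C^*(\Lambda)$ is not stably finite.

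To produce such an element I would apply Lemma~\ref{lem:no positives} to $E_j$. By hypothesis $E_j$ contains a cycle $\mu = \mu_1\cdots\mu_n$ (with each $\mu_l \in \Lambda^{e_j}$) that has an entrance, so condition~(\ref{it:1graphcharCycles}) of that lemma fails for $E_j$, and hence condition~(\ref{it:1graphcharMX}) fails for $E_j$ as well: $\image(1 - A_j^t) \cap \NN{\Lambda^0} \not= \{0\}$. Concretely, exactly as in the first paragraph of the proof of Lemma~\ref{lem:no positives}, the element $a := \sum_{l=1}^n -\delta_{r(\mu_l)} \in \ZZ{\Lambda^0}$ satisfies $(1 - A_j^t)a \in \big(\image(1 - A_j^t) \cap \NN{\Lambda^0}\big) \setminus \{0\}$, using that $r(\mu_l)\Lambda^{e_j} \supsetneq \{\mu_l\}$ for some $l$.

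Since $\image(1 - A_j^t) \subseteq \sum_{i=1}^k \image(1 - A_i^t)$, this $a$ (or $(1-A_j^t)a$) witnesses $\big(\sum_{i=1}^k \image(1 - A_i^t)\big) \cap \NN{\Lambda^0} \not= \{0\}$, and the contrapositive of Theorem~\ref{thm-for-main}(\ref{prp-for-main2}) finishes the proof. There is essentially no obstacle here: the substantial inputs---the combinatorial equivalence (\ref{it:1graphcharCycles})${\iff}$(\ref{it:1graphcharMX}) of Lemma~\ref{lem:no positives} and the passage from stable finiteness to the matrix condition in Theorem~\ref{thm-for-main}---are already established, and all that is genuinely required is the trivial observation that the coordinate graph $E_j$ inherits ``row-finite'' and ``no sources'' from $\Lambda$.
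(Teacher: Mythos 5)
Your proof is correct, but it takes a genuinely different route from the paper. The paper argues directly: given a cycle $\mu = \mu_1\cdots\mu_n$ with an entrance $f$ in the $j$th coordinate graph, the element $S := \sum_{i=1}^n s_{\mu_i}$ satisfies $S^*S = \sum_i p_{r(\mu_i)} \ge SS^* + s_f s_f^* > SS^*$, so $S^*S$ is an infinite projection already in $C^*(\Lambda)$ itself (this is the classical Cuntz--Krieger argument of \cite[Theorem~2.4]{KPR}). You instead extract the combinatorial witness $a = -\sum_l \delta_{r(\mu_l)}$ with $(1-A_j^t)a \in \NN\Lambda^0\setminus\{0\}$ from the proof of Lemma~\ref{lem:no positives} and then invoke the contrapositive of Theorem~\ref{thm-for-main}(\ref{prp-for-main2}). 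Your reduction is logically sound and non-circular: Theorem~\ref{thm-for-main}(\ref{prp-for-main2}) needs only row-finiteness and no sources (no cofinality), and neither it nor the relevant implication of Lemma~\ref{lem:no positives} depends on the present lemma. The trade-off is that your argument routes a purely local, elementary fact through the heaviest machinery in the paper (the skew-product realisation, Takai duality, and the adaptation of Brown's cancellation argument in Lemma~\ref{lem:SNworkhorse}), and it yields only ``not stably finite,'' whereas the direct construction exhibits an infinite projection in the algebra itself, showing $C^*(\Lambda)$ is in fact infinite. A small imprecision: the element $a$ appears not in the ``first paragraph'' of the proof of Lemma~\ref{lem:no positives} but in the proof of the implication (\ref{it:1graphcharCycles})$\Rightarrow$(\ref{it:1graphcharMX}); this does not affect the argument.
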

\begin{proof}
This follows from the argument of \cite[Theorem~2.4]{KPR} or from
\cite[Corollary~3.8]{EvansSims:JFA2012}: if $\mu = \mu_1 \dots \mu_n$ is a cycle with an
entrance $f$ in the $j$th coordinate graph of $\Lambda$, then $S := \sum^n_{i=1}
s_{\mu_i}$ satisfies $S^*S \ge SS^* + s_fs^*_f > SS^*$, so $S^*S$ is an infinite
projection.
\end{proof}

We can now finish the proof of our main theorem.

\begin{proof}[Proof of Theorem~\ref{thm-main}(\ref{thm-main-item2})]
It suffices to show that $C^*(\Lambda)$ is AF-embeddable if and only if it is stably
finite. Every AF-embeddable $C^*$-algebra is stably finite, so we suppose that
$C^*(\Lambda)$ is stably finite. We must show that $C^*(\Lambda)$ is AF-embeddable.

First suppose either that $\Lambda$ has no red cycle or that it has no blue cycle. Then
(\ref{it:1stchar2iii})${\implies}$(\ref{it:1stchar2i}) of Theorem~\ref{thm:1stchar2}
shows that $C^*(\Lambda)$ is AF-embeddable.

Now suppose that $\Lambda$ has a red and a blue cycle. Since $C^*(\Lambda)$ is stably
finite, the contrapositive of Lemma~\ref{lem infinite projn} implies that no red cycle in
$\Lambda$ has a red entrance, and no blue cycle in $\Lambda$ has a blue entrance. Since
$\Lambda$ is cofinal and contains cycles of both colours, Proposition~\ref{prp both
cycles no entries} then implies that $C^*(\Lambda)$ is AF-embeddable.
\end{proof}

\section{Examples}\label{sec:examples}

In this section we reconcile our results with what is known about the $C^*$-algebras of
rank-2 Bratteli diagrams \cite{PRRS} and a key example from \cite{EvansSims:JFA2012}, and
indicate how to apply them to another class of concrete examples.

\subsection{Rank-2 Bratteli diagrams}

Recall from \cite{PRRS} that a row-finite $2$-graph $\Lambda$ with no sources is called a
rank-2 Bratteli diagram if there is a decomposition $\Lambda^0 = \bigcup^\infty_{n=1}
V_n$ such that: (1) each $V_n$ is finite; (2) $\Lambda^{e_1} = \bigsqcup^\infty_{n=1} V_n
\Lambda^{e_1} V_{n+1}$; (3) $\Lambda^{e_1} v \not= \emptyset$ for every $v \in \Lambda^0
\setminus V_1$; (4) $\Lambda^{e_2} = \bigsqcup^\infty_{n=1} V_n \Lambda^{e_2} V_n$; and
(5) each $v\Lambda^{e_2}$ is a singleton.

It then follows that there exist $c_n$ such that each $V_n$ decomposes as
\[\textstyle
V_n = \bigsqcup^{c_n}_{j=1} V_{n,j}
\]
such that the vertices in each $V_{n,j}$ are connected together in a red cycle with no
entrance and $\Lambda^{e_2} = \bigsqcup_{n,j} V_{n,j} \Lambda^{e_2} V_{n,j}$. Theorem~3.1
of \cite{PRRS} implies that $C^*(\Lambda)$ is an A$\TT$-algebra, and so AF-embeddable. We
show how to recover AF-embeddability from Theorem~\ref{thm:1stchar2}.

Since $\Lambda$ has no blue cycles, we must show that
\[
\big(\image(1-A^t_1) + \image(1-A^t_2)\big) \cap \NN{\Lambda^0} = \{0\}.
\]
Suppose that $a,b \in \ZZ\Lambda^0$ satisfy $(1 - A_1^t)a + (1 - A_2^t)b \ge 0$. The
matrix $A_2^t$ is block-diagonal with respect to the decomposition $\Lambda^0 =
\bigsqcup_{n,j} V_{n,j}$, and the diagonal blocks are all permutation matrices. So for
each $n,j$ we have $\sum_{v \in V_{n,j}} ((1 - A_2^t)b)_v = 0$. Thus $\sum_{v \in
V_{n,j}} ((1 - A_1^t) a)_v \ge 0$ for all $n,j$.

It suffices to prove that $(1 - A_1^t)a = 0$; for then Since $(1 - A_1^t)a + (1 - A_2^t)b
\ge 0$ forces $(1 - A_2^t)b \ge 0$, and then since $\sum_{v \in V_{n,j}} ((1 - A_2^t)b)_v
= 0$, we obtain $(1 - A_2^t)b = 0$.

For each $n,j$, let $x_{n,j} = \sum_{v \in V_{n,j}} a_v$. Lemma~4.2 of \cite{PRRS} shows
that for $v,w \in V_{n,j}$ and for $l \le c_{n+1}$, the sets $v \Lambda^{e_1} V_{n+1,l}$
and $w \Lambda^{e_1} V_{n+1, l}$ have the same cardinality $C_n(j,l)$. (Unfortunately our
notation and our convention for connectivity matrices are not compatible with
\cite{PRRS}, so our $C_n(j,l)$ is denoted $A_n(l,j)$ there.) It follows that
\begin{equation}\label{eq:bound below}
0 \le \sum_{w \in V_{n+1,l}} ((1 - A_1^t) a)_w
    = x_{n+1,l} - \sum_{j \le c_n} C_n(j,l) x_{n,j}
\end{equation}
for all $n,l$. Let $F$ be the directed graph with vertices $\{(n,j) : n \in \NN, j \le
c_n\}$ and $|(n,j) F^1 (n+1, l)| = C_n(j,l)$ for all $n,j,l$. The vertex matrix $A_F$
satisfies $A_F^t((n,j), (n+1,l)) = C_n(j,l)$, and so $(1 - A^t_F)x \in \NN F^0$. Since
$F$ has no cycles, (\ref{it:1graphcharCycles})${\implies}$(\ref{it:1graphcharMX}) gives
$(1 - A_F)x = 0$. So~\eqref{eq:bound below} gives $(1 - A_1^t) a = 0$.

\subsection{An example of Evans}

In his thesis \cite{EvansPhD}, Evans investigates conditions under which a $k$-graph
algebra is AF. He describes two particularly vexing examples that indicate the
difficulties involved in answering this question. The examples in question have common
skeleton illustrated in Figure~\ref{fig:skeleton}.
\begin{figure}[h!t]
\begin{tikzpicture}[scale=1.25]
    \foreach \x in {-3,-2,-1,0,1,2,3} {
        \foreach \y in {-1,0,1,2} {
            \node[inner sep = 1pt] (\x\y) at (\x,\y) {\tiny$(\x,\y)$};
        }
        \node at (\x,2.4) {$\vdots$};
        \node at (\x,-1.25) {$\vdots$};
    }
    \foreach \y in {-1,0,1,2} {
        \node at (-3.7,\y) {\dots};
        \node at (3.7,\y) {\dots};
    }
    \foreach \x/\xx in {-3/-2,-2/-1,-1/0,0/1,1/2,2/3} {
        \foreach \y/\yy in {0/-1,1/0,2/1} {
            \draw[-latex, red, dashed] (\x\y)--(\xx\yy);
        }
    }
    \foreach \x in {-3,-2,-1,0,1,2,3} {
        \foreach \y/\yy in {0/-1,1/0,2/1} {
        \draw[-latex, blue] (\x\y) .. controls +(0.15,-0.5) .. (\x\yy);
        \draw[-latex, red, dashed] (\x\y) .. controls +(-0.15,-0.5) .. (\x\yy);
        }
    }
    \foreach \x/\xx in {-2/-3,-1/-2,0/-1,1/0,2/1,3/2} {
        \foreach \y/\yy in {0/-1,1/0,2/1} {
            \draw[-latex, blue] (\x\y)--(\xx\yy);
        }
    }
\end{tikzpicture}
\caption{The common skeleton of two examples studied in \cite{EvansPhD}.
(For those with monochrome printers: the solid edges are blue, and the dashed edges are red)}\label{fig:skeleton}
\end{figure}
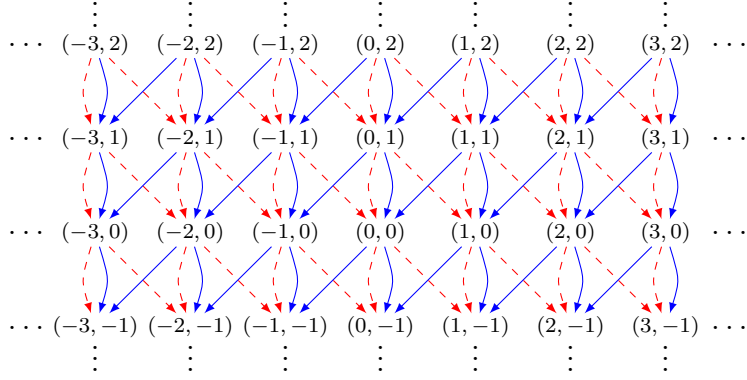

In \cite{EvansSims:JFA2012}, Evans and Sims proved that the $C^*$-algebra of one of the
two examples discussed in \cite{EvansPhD} is the UHF algebra of type $2^\infty$, and that
the other is AF-embeddable. Since the skeleton in Figure~\ref{fig:skeleton} contains no
cycles of either colour, Theorem~\ref{thm:1stchar2} applies. We show here how to see that
the $C^*$-algebra of any $2$-graph with the skeleton in Figure~\ref{fig:skeleton} is
AF-embeddable.

We must show that the coordinate matrices $A_1, A_2 \in M_{\Lambda^0}(\NN)$ for the blue
and red graphs in the skeleton satisfy $\big(\image(1-A^t_1) + \image(1-A^t_2)\big) \cap
\NN{\Lambda^0} = \{0\}$.

Fix $j \in \NN$. For all $i \in \NN$, the matrix $A_1^t$ satisfies $A_1^t \delta_{(i,j)}
= \delta_{(i,j+1)} + \delta_{(i+1, j+1)}$, and $A_2^t$ satisfies $A_2^t \delta_{(i,j)} =
\delta_{(i,j+1)} + \delta_{(i-1, j+1)}$. We deduce that for $a,b \in \ZZ\Lambda^0$, we
have
\begin{equation}\label{eq:thecondition}
\sum^\infty_{i = -\infty} \big((1 - A_1^t)a + (1 - A_2^t)b\big)_{(i,j)}
    = \sum^\infty_{i = -\infty} (a + b)_{(i,j)} - 2\sum^\infty_{i = -\infty} (a+b)_{(i, j-1)}.
\end{equation}
Now suppose that $a,b \in \ZZ\Lambda^0$ satisfy $(1 - A_1^t)a + (1 - A_2^t)b \ge 0$. Then
\[
\sum^\infty_{i = -\infty} (a + b)_{(i,j)}
     \ge 2\sum^\infty_{i = -\infty} (a+b)_{(i, j-1)}.
\]
Since $a+b$ is finitely supported, this forces $\sum^\infty_{i = -\infty} (a + b)_{(i,j)}
= 0$. Putting this back into~\eqref{eq:thecondition} forces $\sum^\infty_{i = -\infty}
\big((1 - A_1^t)a + (1 - A_2^t)b\big)_{i,j} = 0$. Since $(1 - A_1^t)a + (1 - A_2^t)b$ is
nonnegative, we deduce that it is zero.

\subsection{A class of acyclic 2-graphs}

In this section we illustrate our results with a class of examples for which the question
of AF-embeddability has not been settled by previous results. They have skeletons of the
following form (the numbers $|v_n \Lambda^{e_1} v_{n+1}|$ and $|v_m \Lambda^{e_1}
v_{m+1}|$ of blue edges connecting distinct pairs of consecutive vertices are not assumed
to be equal, and likewise for red edges). Again, solid edges are blue, and dashed edges
are red.
\begin{equation}\label{eq:generic2graph}
\parbox{0.8\textwidth}{\mbox{}\hfill
\begin{tikzpicture}[>=stealth, decoration={markings, mark=at position 0.5 with {\arrow{>}}}]
    \node[circle, inner sep=0pt] (v1) at (0,0) {$v_1$};
    \node[circle, inner sep=0pt] (v2) at (3,0) {$v_2$};
    \node[circle, inner sep=0pt] (v3) at (6,0) {$v_3$};
    \node[circle, inner sep=0pt] (v4) at (9,0) {$v_4$};
    \node[circle, inner sep=0pt] at (10.5,0) {$\dots$};
    \draw[blue, postaction=decorate, out=170, in=10] (v2) to node[black, above, pos=0.48, inner sep=0.3em] {$\vdots$} (v1);%{\small$\{\alpha^1_i : i \in I_1\}$} (v1);
    \draw[blue, postaction=decorate, out=135, in=45] (v2) to (v1);%{\small$\{\alpha^1_i : i \in I_1\}$} (v1);
    \draw[blue, postaction=decorate, out=170, in=10] (v3) to node[black, above, pos=0.48, inner sep=0.3em] {$\vdots$} (v2);% {\small$\{\alpha^2_i : i \in I_2\}$} (v2);
    \draw[blue, postaction=decorate, out=135, in=45] (v3) to (v2);%{\small$\{\alpha^1_i : i \in I_1\}$} (v1);
    \draw[blue, postaction=decorate, out=170, in=10] (v4) to node[black, above, pos=0.48, inner sep=0.3em] {$\vdots$} (v3);% {\small$\{\alpha^3_i : i \in I_3\}$} (v3);
    \draw[blue, postaction=decorate, out=135, in=45] (v4) to (v3);%{\small$\{\alpha^1_i : i \in I_1\}$} (v1);
    \draw[red, dashed, postaction=decorate, out=190, in=350] (v2) to (v1);% {\small$\{\beta^1_j : j \in J_1\}$} (v1);
    \draw[red, dashed, postaction=decorate, out=225, in=315] (v2) to node[black, above, pos=0.48, inner sep=0.3em] {$\vdots$} (v1);% {\small$\{\beta^1_j : j \in J_1\}$} (v1);
    \draw[red, dashed, postaction=decorate, out=190, in=350] (v3) to (v2);% {\small$\{\beta^2_j : j \in J_2\}$} (v2);
    \draw[red, dashed, postaction=decorate, out=225, in=315] (v3) to node[black, above, pos=0.48, inner sep=0.3em] {$\vdots$} (v2);% {\small$\{\beta^2_j : j \in J_2\}$} (v2);
    \draw[red, dashed, postaction=decorate, out=190, in=350] (v4) to (v3);% {\small$\{\beta^3_j : j \in J_3\}$} (v3);
    \draw[red, dashed, postaction=decorate, out=225, in=315] (v4) to node[black, above, pos=0.48, inner sep=0.3em] {$\vdots$} (v3);% {\small$\{\beta^3_j : j \in J_3\}$} (v3);
%    \draw[blue, postaction=decorate, out=150, in=30] (v2) to node[black, above, pos=0.5] {\small$I_1$} (v1);%{\small$\{\alpha^1_i : i \in I_1\}$} (v1);
%    \draw[blue, postaction=decorate, out=150, in=30] (v3) to node[black, above, pos=0.5] {\small$I_2$} (v2);% {\small$\{\alpha^2_i : i \in I_2\}$} (v2);
%    \draw[blue, postaction=decorate, out=150, in=30] (v4) to node[black, above, pos=0.5] {\small$I_3$} (v3);% {\small$\{\alpha^3_i : i \in I_3\}$} (v3);
%    \draw[red, dashed, postaction=decorate, out=210, in=330] (v2) to node[black, below, pos=0.5] {\small$J_1$} (v1);% {\small$\{\beta^1_j : j \in J_1\}$} (v1);
%    \draw[red, dashed, postaction=decorate, out=210, in=330] (v3) to node[black, below, pos=0.5] {\small$J_2$} (v2);% {\small$\{\beta^2_j : j \in J_2\}$} (v2);
%    \draw[red, dashed, postaction=decorate, out=210, in=330] (v4) to node[black, below, pos=0.5] {\small$J_3$} (v3);% {\small$\{\beta^3_j : j \in J_3\}$} (v3);
\end{tikzpicture}\hfill\mbox{}}
\end{equation}
In any such $2$-graph $\Lambda$, the factorisation property is completely determined by
bijections $\theta_n : v_n \Lambda^{e_1} v_{n+1} \times v_{n+1} \Lambda^{e_2} v_{n+2} \to
v_n \Lambda^{e_2} v_{n+1} \times v_{n+1} \Lambda^{e_1} v_{n+2}$: specifically,
$\alpha\beta = \beta'\alpha'$ where $\theta_n(\alpha,\beta) = (\beta',\alpha')$ (see
\cite[Section~6]{KP} or \cite{HRSW}).

Since these $2$-graphs have no cycles, Theorem~\ref{thm:1stchar2} applies to characterise
when the associated $C^*$-algebras are AF-embeddable. To apply it, we first show that the
ratios $|v_n \Lambda^{e_1}|/|v_n \Lambda^{e_2}|$ are all equal.

\begin{lem}\label{lem:unequal}
Suppose that $\Lambda$ is a $2$-graph with skeleton of the form~\eqref{eq:generic2graph}.
Then
\[
|v_n \Lambda^{e_1}|/|v_n \Lambda^{e_2}| = |v_m \Lambda^{e_1}|/|v_m \Lambda^{e_2}|\quad\text{ for all
$m,n \in \NN\setminus\{0\}$.}
\]
\end{lem}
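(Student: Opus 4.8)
The plan is to read everything off the factorisation property, since for a skeleton of the form~\eqref{eq:generic2graph} all edges run between consecutive vertices. First I would fix notation: for $n\in\NN\setminus\{0\}$ write $b_n:=|v_n\Lambda^{e_1}v_{n+1}|$ and $r_n:=|v_n\Lambda^{e_2}v_{n+1}|$ for the numbers of blue and red edges from $v_{n+1}$ to $v_n$. Because the skeleton has no edges joining $v_n$ to $v_{n-1}$ in the direction with range $v_n$ (all edges between $v_{n-1}$ and $v_n$ have range $v_{n-1}$), every blue edge with range $v_n$ has source $v_{n+1}$, and likewise for red edges; hence $|v_n\Lambda^{e_1}|=b_n$ and $|v_n\Lambda^{e_2}|=r_n$. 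Row-finiteness makes these numbers finite and the no-sources hypothesis makes them nonzero, so the ratios $|v_n\Lambda^{e_1}|/|v_n\Lambda^{e_2}|=b_n/r_n$ are well defined, and the assertion is equivalent to the statement that $n\mapsto b_n/r_n$ is constant.

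Next I would invoke the factorisation bijection $\theta_n:v_n\Lambda^{e_1}v_{n+1}\times v_{n+1}\Lambda^{e_2}v_{n+2}\to v_n\Lambda^{e_2}v_{n+1}\times v_{n+1}\Lambda^{e_1}v_{n+2}$ from~\cite[Section~6]{KP}: every element of $v_n\Lambda^{e_1+e_2}v_{n+2}$ has a unique blue-then-red factorisation and a unique red-then-blue factorisation, so $\theta_n$ is a bijection of finite sets. Equating cardinalities gives $b_n r_{n+1}=r_n b_{n+1}$, that is, $b_n/r_n=b_{n+1}/r_{n+1}$. A one-line induction on $|n-m|$ then shows $b_n/r_n=b_m/r_m$ for all $m,n\in\NN\setminus\{0\}$, which is the claim.

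I do not expect a genuine obstacle here; the only point requiring a moment's care is the identification $|v_n\Lambda^{e_i}|=|v_n\Lambda^{e_i}v_{n+1}|$, which is immediate from the shape of the skeleton, together with the observation that row-finiteness and the no-sources hypothesis guarantee these cardinalities are finite and positive so the ratios make sense.
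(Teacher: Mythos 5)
Your proof is correct and is essentially the paper's argument: both count $|v_n\Lambda^{(1,1)}v_{n+2}|$ in two ways via the factorisation property to get $|v_n\Lambda^{e_1}|\,|v_{n+1}\Lambda^{e_2}| = |v_n\Lambda^{e_2}|\,|v_{n+1}\Lambda^{e_1}|$ and then pass from consecutive indices to all indices. The only difference is cosmetic: you make explicit the identification $|v_n\Lambda^{e_i}|=|v_n\Lambda^{e_i}v_{n+1}|$ and the finiteness/nonvanishing of these counts, which the paper leaves implicit.
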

\begin{proof}
We need only show that $|v_n \Lambda^{e_1}|/|v_n \Lambda^{e_2}| = |v_{n+1}
\Lambda^{e_1}|/|v_{n+1} \Lambda^{e_2}|$ for each $n$. The factorisation property gives
$|v_n \Lambda^{e_1}| |v_{n+1} \Lambda^{e_2}| = |v_n \Lambda^{(1,1)} v_{n+2}| = |v_n
\Lambda^{e_2}| |v_{n+1} \Lambda^{e_1}|$. Since all the quantities involved are finite and
nonzero, we can rearrange to obtain $|v_n \Lambda^{e_1}|/|v_n \Lambda^{e_2}| = |v_{n+1}
\Lambda^{e_1}|/|v_{n+1} \Lambda^{e_2}|$.
\end{proof}

We can now characterise AF-embeddability of $C^*(\Lambda)$ for any $2$-graph $\Lambda$
with skeleton of the form~\eqref{eq:generic2graph} as an immediate consequence of
Theorem~\ref{thm:1stchar2}. We could also deduce from the same result that when
$C^*(\Lambda)$ is not AF-embeddable, it is not stably finite. But with a little extra
work, we can prove that when it is not AF-embeddable, $C^*(\Lambda)$ is purely infinite.

\begin{prp}
Suppose that $\Lambda$ is a $2$-graph with skeleton of the form~\eqref{eq:generic2graph}.
Then $C^*(\Lambda)$ is AF-embeddable if $|v_1\Lambda^{e_1}| = |v_1\Lambda^{e_2}|$, and is
purely infinite otherwise.
\end{prp}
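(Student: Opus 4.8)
The plan is to run everything through Theorem~\ref{thm:1stchar2}. Write $b_n:=|v_n\Lambda^{e_1}v_{n+1}|$ and $r_n:=|v_n\Lambda^{e_2}v_{n+1}|$; since the skeleton~\eqref{eq:generic2graph} has edges only between consecutive vertices and pointing towards lower indices, the coordinate matrices satisfy $A_1^t\delta_{v_n}=b_n\delta_{v_{n+1}}$ and $A_2^t\delta_{v_n}=r_n\delta_{v_{n+1}}$, and $|v_n\Lambda^{e_1}|=b_n$, $|v_n\Lambda^{e_2}|=r_n$. By Lemma~\ref{lem:unequal} the ratio $q:=b_n/r_n$ is independent of $n$, so the hypothesis $|v_1\Lambda^{e_1}|=|v_1\Lambda^{e_2}|$ is precisely the condition $q=1$. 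For $a=\sum_n a_n\delta_{v_n}$ and $c=\sum_n c_n\delta_{v_n}$ in $\ZZ\Lambda^0$, with the convention that $a_0=c_0=b_0=r_0=0$, one computes
\[
\bigl((1-A_1^t)a+(1-A_2^t)c\bigr)_{v_m}=(a_m+c_m)-(b_{m-1}a_{m-1}+r_{m-1}c_{m-1}).
\]
Since $\Lambda$ has no cycles of either colour, Theorem~\ref{thm:1stchar2} applies, so $C^*(\Lambda)$ is AF-embeddable (equivalently quasidiagonal, equivalently stably finite) if and only if $\bigl(\image(1-A_1^t)+\image(1-A_2^t)\bigr)\cap\NN\Lambda^0=\{0\}$.

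When $q=1$ we have $b_n=r_n$ for all $n$, and the displayed quantity becomes $d_m-b_{m-1}d_{m-1}$ with $d_m:=a_m+c_m$. If this is $\ge 0$ for all $m$, I would apply the strictly positive weights $w_m:=\prod_{1\le k<m}b_k^{-1}$: because $d$ is finitely supported all the sums below are finite, and because $w_{m+1}b_m=w_m$ we get
\[
\sum_m w_m\bigl((1-A_1^t)a+(1-A_2^t)c\bigr)_{v_m}=\sum_m(w_m-w_{m+1}b_m)\,d_m=0,
\]
forcing every term, hence the whole vector, to vanish. Thus the intersection is $\{0\}$ and Theorem~\ref{thm:1stchar2} gives AF-embeddability. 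When $q\ne 1$, write $q=p/s$ in lowest terms, so $p\ne s$, and (as $b_ns=r_np$ with $\gcd(p,s)=1$) $p\mid b_n$, $s\mid r_n$; put $b_n=pm_n$, $r_n=sm_n$. Taking $a=s(\delta_{v_1}+\delta_{v_2})$ and $c=-p(\delta_{v_1}+\delta_{v_2})$, the formula above gives $(1-A_1^t)a+(1-A_2^t)c=(s-p)(\delta_{v_1}+\delta_{v_2})$ (the coefficients at $v_3$ and beyond cancel because $pr_n=sb_n$); replacing $(a,c)$ by $(-a,-c)$ if necessary produces a nonzero element of $\bigl(\image(1-A_1^t)+\image(1-A_2^t)\bigr)\cap\NN\Lambda^0$, so Theorem~\ref{thm:1stchar2} shows $C^*(\Lambda)$ is not stably finite, in particular not AF-embeddable.

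It remains to upgrade ``not stably finite'' to ``purely infinite'' in the case $q\ne 1$. The first step is to locate infinite projections: the Cuntz--Krieger relations give $p_{v_n}\sim p_{v_{n+1}}^{\oplus b_n}$ and $p_{v_n}\sim p_{v_{n+1}}^{\oplus r_n}$ in $M_\infty(C^*(\Lambda))$ (via the row isometries $(s_e)_{e\in v_n\Lambda^{e_1}v_{n+1}}$ and $(s_f)_{f\in v_n\Lambda^{e_2}v_{n+1}}$), whence $p_{v_{n+1}}^{\oplus b_n}\sim p_{v_{n+1}}^{\oplus r_n}$. Assuming $b_n>r_n$ (the opposite case being symmetric), iterating this equivalence yields $p_{v_{n+1}}^{\oplus r_n}\sim p_{v_{n+1}}^{\oplus(r_n+k(b_n-r_n))}$ for every $k\ge 0$; choosing $k$ with $k(b_n-r_n)\ge r_n$ gives $p_{v_{n+1}}\oplus p_{v_{n+1}}\precsim p_{v_{n+1}}^{\oplus r_n}\sim p_{v_n}$, so every vertex projection $p_{v_n}$ is properly infinite. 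To pass from here to pure infiniteness of $C^*(\Lambda)$, the plan is to first observe that $q\ne 1$ forces $\Lambda$ to be aperiodic: any nontrivial period of a $2$-graph with skeleton~\eqref{eq:generic2graph} must lie in $\{(t,-t):t\in\ZZ\}$ (since shifting a path by a degree $v$ moves it $|v|$ steps along the unique chain), so a period would have the form $(d,-d)$ with $d\ge 1$ and would force $\sigma^{(d,0)}x=\sigma^{(0,d)}x$ for all $x\in\Lambda^\infty$; comparing the first blue edge and the first red edge of the two shifted paths as $x$ ranges over $v_1\Lambda^\infty$ then forces $b_{1+d}=r_{1+d}=1$, contradicting $b_{1+d}/r_{1+d}=q\ne 1$. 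Granting aperiodicity, $C^*(\Lambda)$ is simple (it is also cofinal), and a simple $C^*$-algebra containing a properly infinite projection is purely infinite \cite{Rordam:EMS02}; alternatively one can pass to the full unital corner $p_{v_1}C^*(\Lambda)p_{v_1}$, to which $C^*(\Lambda)$ is stably isomorphic, and apply Cuntz's criterion together with the Cuntz--Krieger uniqueness machinery for aperiodic $k$-graphs. I expect this last step---turning ``every vertex projection is properly infinite'' into ``$C^*(\Lambda)$ is purely infinite''---to be the main obstacle, and within it the combinatorial verification that $q\ne 1$ rules out a period $(d,-d)$ (equivalently, that the two shifted infinite paths $\sigma^{(d,0)}x$, $\sigma^{(0,d)}x$ genuinely differ as soon as there is a nontrivial choice of edge at level $1+d$) is the point that will need to be carried out carefully using the factorisation bijections $\theta_n$.
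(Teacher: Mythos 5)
Your handling of the first half is fine: the weighted-sum argument for the case $|v_1\Lambda^{e_1}|=|v_1\Lambda^{e_2}|$ is a correct direct verification of condition~(\ref{it:1stchar2iv}) (the paper gets there faster by noting $A_1^t=A_2^t$, so the sum of images collapses to $\image(1-A_1^t)$ and Lemma~\ref{lem:no positives} applies), your explicit element $(s-p)(\delta_{v_1}+\delta_{v_2})$ correctly shows~(\ref{it:1stchar2iv}) fails when $q\ne 1$, and the iteration showing each $p_{v_n}$ is properly infinite is sound. The two steps you flagged as delicate, however, are genuine gaps, not routine verifications.

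First, aperiodicity. The intermediate claim that a period $(d,-d)$ forces $b_{1+d}=r_{1+d}=1$ is false. For a counterexample, pull back a single chain graph $E$ with two edges between each pair of consecutive vertices along $f:\NN^2\to\NN$, $f(m,n)=m+n$: the result is a $2$-graph with skeleton~\eqref{eq:generic2graph}, all $b_n=r_n=2$, in which $\sigma^{e_1}x=\sigma^{e_2}x$ for every infinite path $x$, so it has period $(1,-1)$. What a period genuinely forces is a \emph{source-preserving bijection} $v_n\Lambda^{(d,0)}\to v_n\Lambda^{(0,d)}$ (this is the content of the result of Carlsen--Kang--Shotwell--Sims that the paper invokes), whence $\prod_{k=n}^{n+d-1}b_k=\prod_{k=n}^{n+d-1}r_k$, i.e.\ $q^d=1$ and so $q=1$. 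That counting argument is the correct contradiction with $q\ne 1$; comparing individual first edges of the two shifted paths cannot work, because equality of edge sets at one level is simply not what periodicity implies.

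Second, and more seriously, the principle ``a simple $C^*$-algebra containing a properly infinite projection is purely infinite'' is false: R{\o}rdam has constructed a unital, simple, separable, nuclear $C^*$-algebra containing both an infinite projection and a nonzero finite projection. So even after establishing simplicity and proper infiniteness of every $p_{v_n}$, pure infiniteness does not follow from general theory. The paper closes exactly this gap by citing a result specific to \'etale groupoid (hence $k$-graph) $C^*$-algebras: for a cofinal, aperiodic row-finite $k$-graph with no sources in which every vertex projection is infinite, $C^*(\Lambda)$ is purely infinite. Some input of this kind --- exploiting the groupoid/graph structure, not just simplicity --- is indispensable; the gesture towards ``Cuntz's criterion together with the Cuntz--Krieger uniqueness machinery'' does not substitute for it.
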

\begin{proof}
First suppose that $|v_1\Lambda^{e_1}| = |v_1\Lambda^{e_2}|$. Then $|v_n \Lambda^{e_1}| =
|v_n \Lambda^{e_2}|$ for all $n$ by Lemma~\ref{lem:unequal}. So $A_1^t = A_2^t$. Hence
$\big(\image(1-A^t_1) + \image(1-A^t_2)\big) = \image(1-A^t_1)$. Since the blue subgraph
of $\Lambda$ has no cycles, we have $\image(1 - A_1^t) \cap \NN\Lambda^0 = \{0\}$ by
(\ref{it:1graphcharCycles})${\implies}$(\ref{it:1graphcharMX}) of Lemma~\ref{lem:no
positives}. Hence $C^*(\Lambda)$ is AF-embeddable by Theorem~\ref{thm:1stchar2}.

Now suppose that $|v_1 \Lambda^{e_1}| \not= |v_1 \Lambda^{e_2}|$.  We may assume (by
reversing the roles of the colours if necessary) that $|v_1\Lambda^{e_1}| <
|v_1\Lambda^{e_2}|$. So $R := |v_1\Lambda^{e_1}|/|v_1\Lambda^{e_2}| < 1$, and
Lemma~\ref{lem:unequal} gives $|v_n \Lambda^{e_1}|/|v_n \Lambda^{e_2}| = R$ for all $n$.
We claim that $\Lambda$ is aperiodic. To see this, we suppose that $\Lambda$ is not
aperiodic, and derive a contradiction. Parts (2)~and~(3) of \cite[Theorem~4.2]{CKSS} show
that there exist $p \not= q \in \NN^2$ and an element $n \in \NN$ for which there is a
source-preserving bijection $\psi : v_n\Lambda^p \to v_n\Lambda^q$. Let $|p| := p_1 +
p_2$ be the sum of the coordinates of $p \in \NN^2$, and likewise for $q$. If $\lambda
\in v_n\Lambda^p$, then
\[
v_{n + |p|} = s(\lambda) = s(\psi(\lambda)) = v_{n + |q|},
\]
and in particular, $|p| = |q|$. Since $p \not= q$, we have $p_1 \not= q_1$, so without
loss of generality, we may assume that $p_1 > q_1$. Using the factorisation property, we
see that
\begin{align*}
|v_n \Lambda^p| &= \prod^{p_1-1}_{i=0} |v_{n+i}\Lambda^{e_1}| \prod^{|p|-1}_{i=p_1} |v_{n+i}\Lambda^{e_1}|,\quad\text{and}\\
|v_n \Lambda^q| &= \prod^{q_1-1}_{i=0} |v_{n+i}\Lambda^{e_1}| \prod^{|q|-1}_{i=q_1} |v_{n+i}\Lambda^{e_2}|
    = \prod^{q_1-1}_{i=0} |v_{n+i}\Lambda^{e_1}| \prod^{|p|-1}_{i=q_1} |v_{n+i}\Lambda^{e_2}|.
\end{align*}
Hence
\[
|v_n \Lambda^p|/|v_n \Lambda^q| = \prod^{p_1 - 1}_{i=q_1} \frac{|v_{n+i}\Lambda^{e_1}|}{|v_{n+i}\Lambda^{e_2}|} = R^{p_1 - q_1} < 1,
\]
which contradicts $|v_n\Lambda^p| = |v_n\Lambda^q|$. So $\Lambda$ is aperiodic as
claimed.

Now fix $n \in \NN$. Using that $|v_n \Lambda^{e_1}| < |v_n \Lambda^{e_2}|$, choose an
injection $\phi : v_n \Lambda^{e_1} \to v_n \Lambda^{e_2}$ and an element $\beta \in v_n
\Lambda^{e_2} \setminus \phi(v_n \Lambda^{e_1})$. The element $V := \sum_{\alpha \in v_n
\Lambda^{e_1}} s_{\phi(\alpha)} s^*_{\alpha}$ satisfies
\begin{align*}
V^*V &= \sum_{\alpha \in v_n \Lambda^{e_1}} s_\alpha s^*_\alpha
     = \sum_{\alpha \in v_n\Lambda^{e_1}} s_\alpha s^*_\alpha
     = p_{v_n}, \text{ and} \\
VV^* &= \sum_{\alpha \in v_n \Lambda^{e_1}} s_{\phi(\alpha)} s^*_{\phi(\alpha)}
     \le \sum_{\eta \in v_n\Lambda^{e_2} \setminus \beta} s_\eta s^*_\eta
     = p_{v_n} - s_\beta s^*_\beta
     < p_{v_n}.
\end{align*}
So each $p_{v_n}$ is an infinite projection.

So $\Lambda$ is cofinal and aperiodic, and all vertex projections are infinite in
$C^*(\Lambda)$. Thus \cite[Corollary~5.1]{BCS} implies that $C^*(\Lambda)$ is purely
infinite.
\end{proof}

\end{document}